\numberwithin{figure}{section}
\numberwithin{table}{section}
\numberwithin{equation}{section}
\definecolor{lightblue}{rgb}{0.22,0.45,0.70}
\definecolor{lightgreen}{rgb}{0.22,0.50,0.25}
\newcommand{\trinl}{\ensuremath{|\!|\!|}}
\newcommand{\trinr}{\ensuremath{|\!|\!|}}
\newcommand{\mycomment}[1]{}
\newtheorem{thm}{Theorem}[section]
\newtheorem{defn}{Definition}[section]
\newtheorem{lemma}[thm]{Lemma} 
\newtheorem{rem}[thm]{Remark}%[section]
\newtheorem{cor}[thm]{Corollary}
\newtheorem{example}{Example}[section]
\newcommand\norm[1]{\lVert#1\rVert}
\newcommand{\dx}{\,\mathrm{d}x}
\newcommand{\ds}{\,\mathrm{d}s}
\newcommand{\dt}{\,\mathrm{d}t}
\title{\bf Semi- and Fully-Discrete Analysis of
Lowest-Order Nonstandard Finite Element
Methods for the Biharmonic Wave Problem} 
\author{Neela Nataraj\footnote{\textbf{Corresponding author:} Department of Mathematics, Indian Institute of Technology Bombay, Mumbai, Maharashtra
400076, India, e-mail: neela@math.iitb.ac.in} \quad 
Ricardo Ruiz-Baier\footnote{ School of Mathematics, Monash University, 9 Rainforest Walk, 3800 Melbourne, Victoria, Australia; and Universidad
Adventista de Chile, Casilla 7-D Chillán, Chile, e-mail: ricardo.ruizbaier@monash.edu}
\quad Aamir Yousuf\footnote{ IITB–Monash Research Academy, Indian Institute of Technology Bombay, Mumbai, Maharashtra 400076, India,
e-mail: aamir72@iitb.ac.in}} 
\begin{document}
\maketitle
%%%%%%%%%%%%%%%%%%%%%%%%%%%%%%%%%%%
%\begin{abstract}
\noindent
\textbf{Abstract:} This paper discusses lowest-order nonstandard finite element methods for space discretization and explicit and implicit schemes for time discretization of the biharmonic wave equation with clamped boundary conditions. A modified Ritz projection operator defined on $H^2_0(\Omega)$ ensures error estimates under appropriate regularity assumptions on the solution. Stability results and error estimates of optimal order are established in suitable norms for the  semidiscrete and explicit/implicit fully-discrete versions of the proposed schemes. Finally, we report on numerical experiments using explicit and implicit schemes for time discretization and Morley, discontinuous Galerkin, and {C$^0$ interior} penalty schemes for space discretization,  that validate the theoretical error estimates.

\medskip \noindent 
\textbf{Keywords.} Biharmonic wave equation, Modified Ritz projection, Stability, Error estimates.

\medskip \noindent 
\textbf{MSC 2020:}   65M60, 65M06, 65M15
%\end{abstract}
\section{Introduction}
\subsection{Scope and Problem Formulation}
The biharmonic wave model finds application in representing various physical phenomena, such as the bending of plates, and elasticity in thin plates under dynamic loading conditions. This is also a foundational part for more advanced linear and nonlinear models such as Kirchhoff-type equations using Monge--Amp\`ere forms  \cite{MR4403922,MR1422248,MR0953313}, which in turn serve as model for the transmission of waves, damping phenomena, standing waves, properties of ferromagnetic materials, and vibration modes within plates.

This paper analyses lowest-order nonstandard finite element methods (FEMs) for space discretization and {explicit leapfrog/implicit Newmark time discretization schemes for the  biharmonic wave equation} 
\begin{equation}
u_{tt} + \Delta^2 u = f(x,t), \quad (x,t) \in \Omega \times \left(0,T \right], \label{P1 strong_form}
\end{equation}
with initial and clamped boundary conditions
\begin{align}
\begin{cases}
\;u(x,0)=u^0(x),&\\
\hspace{0.01cm}u_t(x,0)=v^0(x) &\quad\text{in }\Omega,\\
 \; u=\frac{\partial u}{\partial n} =0, &\quad\text{on }\partial \Omega \times \left(0,T \right].\label{P1 strong_icbc}
  \end{cases}
\end{align}

Here $\Omega$ is a bounded polygonal Lipschitz domain in $\mathbb{R}^2$ with  boundary $\partial \Omega$ and outward-pointing unit normal $n$, 
$$\Delta^2 u:= \frac{\partial^4 u}{\partial x^4} + 2 \:\frac{\partial^4 u}{\partial x^2 \partial y^2} +\frac{\partial^4 u}{\partial y^4}$$
            denotes the biharmonic operator, $\partial u / \partial n = \nabla u \cdot n $ is the outer normal derivative of $u$ on $\partial \Omega$,   $u_{t} $, $u_{tt} $ denote the first- and second-order  derivatives with respect to time, respectively, {and $f\in L^2(L^2(\Omega))$ is a given source function}.  
   %Here $u$ represents the deflection of the plate. 

\subsection{Literature Overview}
Despite being a classical problem, research on the biharmonic problem remains an area of active interest, as evidenced by recent contributions \cite{MR4092275,MR4376276,MR4485999,MR3767813,MR4597463,MR4480625}. {The design of}  \(C^1\) finite elements, which ensure continuity of both basis functions and their first-order derivatives over the closure of the domain and conform to the space \(H^2(\Omega)\), for fourth-order problems is notoriously challenging. Consequently, nonstandard FEMs such as the Morley FEM \cite{MR4230429,MR3934690,MR2207619}, discontinuous Galerkin (dG) FEM \cite{MR2755946,DG-BI}, and the \(C^0\) interior penalty (\(C^0\)IP) method \cite{MR4481121,MR2298696} present attractive alternatives. 
%These schemes, while nonconforming with \(H^2(\Omega)\), provide viable solutions. 
Recently, an abstract framework for analyzing lowest-order FEMs for the clamped plate biharmonic problem has been established in \cite{MR4376276}. {One aim of this paper is to investigate and adapt such an analysis for the case of biharmonic wave problems.

Fourth-order parabolic problems have been addressed using the Morley FEM \cite{MR4305273}, dG FEM \cite{MR3356023}, and the \(C^0\) interior penalty method \cite{MR3023298}, typically employing the backward Euler method for time discretization. Similar numerical methods have been extensively studied for second-order hyperbolic equations, as seen in \cite{MR3438428,MR2272600,MR0945124,MR4456271,MR2942377,MR1241479,MR0349045,MR2052507}. However, despite their importance in applications, the literature on the numerical approximation of the fourth-order biharmonic wave equation is relatively sparse. Notable contributions include \cite{MR4444402}, where the biharmonic wave problem is discretized using classical \(C^1\)-conforming Bogner--Fox--Schmit elements in space, combined with Galerkin and collocation techniques for time discretization. A mixed velocity-moment formulation is analyzed in \cite{MR2114953} for the fourth-order Kirchhoff--Love dynamic plate equations using Lagrange FEs in space with both explicit and implicit central difference schemes in time. Additionally, error estimates for the fourth-order wave equation have been derived using a combination of discrete Galerkin and second-order accurate methods for space and time discretizations \cite{MR0317559}. Mixed FEMs for fourth-order wave equations with various boundary conditions and optimal error estimates have been studied in \cite{MR3003381,he23}.
\subsection{Specific Contributions}
We now describe the main contributions of this paper.
\begin{itemize}
\item To the best of our knowledge, this is the {\it first} attempt to analyze the biharmonic wave equation for nonstandard {FEM for spatial and the  {explicit}/implicit} schemes for time discretization. 

\item {The Courant–Friedrichs–Lewy (CFL) condition for wave equation is extensively discussed in the literature (see, e.g., \cite{MR2511734}), but its analysis specifically targeted for the biharmonic wave equation was still {\it unavailable}.
The CFL condition for explicit time discretization scheme applied to $u_{tt}+ \Delta u =f$ (resp. $u_{tt}+ \Delta^2 u =f$) reads \(
k \le C_{\text{CFL}} h
\) (resp. \(
k \le C_{\text{CFL}} h^2
\) ), where \( h \) represents the maximal mesh-size of  a quasi-uniform triangulation of the spatial domain. Here 
\(
{C_{\text{CFL}} := \beta^{-1/2}C^{-1}_{\text{inv}} },
\)
where \( C_{\text{inv}} \) is the constant from discrete inverse inequality applied to piecewise $H^1$ (resp. $H^2$) functions and \( \beta \) is the continuity constant corresponding for the bilinear form in the weak formulation for wave (resp. biharmonic wave) equations.  
One key observation is that {the form of constant that appears in the CFL condition} is invariant in the discretization 
using explicit scheme for temporal discetization.}
\item {The CFL condition also shows that explicit schemes applied to higher-order equations (in space) face stringent stability and convergence restrictions due to the conditions on the time step and motivates the {\it implicit scheme} for temporal discretization discussed in this article.
 In comparison to the explicit scheme, the {\it implicit scheme} discussed in this article (a) removes the necessity for the quasi-uniformity assumption on the mesh and (b) relaxes the constraints imposed by the Courant--Friedrichs--Lewy  (CFL) condition. }
\item The regularity results advanced in this work (see  Lemma~\ref{P1 regularity_lemma}) are established under (a) certain smoothness assumptions on $f$ and its derivatives and (b) for non-homogeneous initial conditions, which are different from those in \cite[Theorem 7.1]{MR0350178}.  This is attained by using the approach of explicit solution representation {for proving} the regularity results rather than using the energy arguments as discussed in \cite{MR0350178}.

\item We employ a modified Ritz projection (see the definition of $\mathcal{R}_h$ in \eqref{P1 ritz_projection}) on {$H^2_0(\Omega)$ defined with} the help of the companion operator (cf. \cite{MR4376276}). 
It is also important to note that {this modified Ritz projection readily yields $L^2-$estimates (see Corollary~\ref{corr}, Remark~\ref{newremark} for the fully-discrete schemes) based on the energy norm error estimate of the solution and $L^2-$error estimate of time derivative of the solution without the need for any additional analysis.}

\item 
The test function in the continuous weak form is selected following an approach that consists {in} lifting discrete functions to $H^2_0(\Omega)$ using a suitably defined {\it smoother} \cite{MR4376276}. This {\it novel} technique {in the context of evolution problems} helps to bound the semidiscrete and fully-discrete errors by manipulating the continuous and discrete formulations. This is a significant improvement in comparison to more standard approaches that assume higher regularity of the continuous solution and in which the PDE \eqref{P1 strong_form} is tested against a function from the discrete space. Moreover, the new approach facilitates a more elegant error analysis avoiding extra boundary terms that arise in the standard error analysis typically used in the literature, see for example, \cite{MR2272600}. 
\item 
 Given the regularity of the exact solution $u$, as discussed in Lemma~\ref{P1 regularity_lemma}, quasi-optimal convergence is achieved in both the {maximum error in energy and the $L^2-$norm over a finite time interval}. For the semidiscrete scheme, we show convergence rates of ${\mathcal{O}}(h^{\gamma_0})$ and ${\mathcal{O}}(h^{2\gamma_0})$, respectively, where {$\gamma_0 \in (1/2, 1]$ is the index of elliptic regularity of the biharmonic operator} and $h$ represents the spatial mesh-size. On the other hand, for the explicit/implicit fully-discrete schemes,  the convergence rates are ${\mathcal{O}}(h^{\gamma_0}+k^2)$ and ${\mathcal{O}}(h^{2\gamma_0}+k^2)$, in energy and $L^2$ norms, where $k$ represents the time step. 
 \end{itemize}
%\item 
%\end{enumerate}
\subsection{Outline of the Paper}
The contents of this paper are organized as follows. In the remainder of this section, we introduce standard notations to be used throughout the manuscript,  provide the weak formulation for the problem, and state the regularity results of the continuous solution under smoothness assumptions on the {given data.} Section \ref{P1 semi_errors_section} discusses the semidiscrete FE approximation and error analysis. Sections \ref{P1 fully_discrete_section} and \ref{P1 implicit_direct} are devoted to the error analysis for explicit and implicit fully-discrete schemes.  Numerical results obtained with Morley, $C^0$IP, and dGFEM for space discretization  and explicit/implicit schemes for time discretization are presented in Section \ref{P1 numeric_section}  to validate the theoretical error bounds and also to illustrate the use of the method in a simple application problem in heterogeneous media.

\subsection{Preliminaries and Functional Setting}
\bigskip 
\paragraph{{Notation.}} For ${\cal X} \subset \mathbb{R}^2$,  we denote the Sobolev space $W^{m,2}({\cal X})$ by $H^m({\cal X})$  and equip it with the norm 
$$\norm{w}_{H^m({\cal X})}= \Big(\underset{{|i| \le m}}{\sum} \norm{D^iw}_{L^2({\cal X})}^2\Big)^{1/2}$$
and semi-norm 
$$| w |^2_{H^m({\cal X})}=\Big(\underset{{|i| = m}}{\sum}\|D^iw \|_{L^2({\cal X})}^2\Big)^{1/2}.$$
For simplicity, we denote the $L^2$-inner product by $(\cdot, \cdot)$ and norm by $\| \cdot \|.$ {Throughout this paper, $\mathcal{T}$ denotes a shape-regular triangulation of $\Omega$ unless mentioned otherwise,} $H^m({\cal T})$ denotes the Hilbert space $\underset {{K \in {\cal T}}}{\prod}H^m (K)$, and {$P_r({\cal T})$ the} space of globally $L^2$-{functions, which} are polynomials of degree at most $r$ in each $K$. The piecewise energy norm is denoted by $\trinl \cdot \trinr_{\text{pw}}:=|\cdot|_{H^{2}({\cal T})}$ and $D^2_\text{pw}$ stands for the piecewise Hessian.

\noindent 
Let $X$ be a normed space with norm $\|\cdot\|_X$ and $g:(0,T) \rightarrow X$ be a measurable function. Then for $1 \le p \le \infty$, we recall that  
%\begin{align*}
%\norm{g}^p_{L^p(0,T;X)}=
\[\displaystyle \norm{g}^p_{L^p(X)}:=\int_0^T \norm{g(t)}_X^p \dt, \ 1\le p< \infty \quad \text{ and }\quad 
 % \norm{g}_{L^\infty(0,T;X)}=    
 \norm{g}_{L^\infty(X)}:=\underset{0 \le t \le T}{\text{ess sup}} \norm{g(t)}_X.\]
%\end{align*}
Let 
%\begin{equation*}
 %$     L^p(0,T;X)=
$L^p(0,T;X):=\left\{g:(0,T) \rightarrow X: \norm{g}_{L^p(X)}<\infty\right\}$
%\end{equation*}
and $C^k([0,T];X)$ denote all $C^k$ functions $h :[0,T] \rightarrow X$ with 
$$\norm {h}_{C^k([0,T];X)}=\underset{0 \le i \le k}{\sum} \underset{0 \le t\le T}{\max} {\norm{h^i(t)}} < \infty, \text{ where }h^i(t)=\frac{\partial^i h}{\partial t^i}.$$

For  real numbers $a > 0$, $b > 0$, and $\epsilon > 0$, we will make repeated use of the  weighted {Young's} (arithmetic-geometric mean) inequality $ab \leq \frac{\epsilon}{2}a^2 + \frac{1}{2\epsilon}b^2$. Finally, as usual, the notation $a \lesssim b$ represents $a \le Cb$, where the generic constant $C$ is independent of both mesh-size and time discretization parameter.

%%%%%%%%%%%%%%%%
\paragraph{Weak Formulation.} Let $a(\cdot,\cdot):H^2_0(\Omega) \times H^2_0(\Omega) \rightarrow {\mathbb R}$ be a  symmetric, continuous, and $H^2_0(\Omega)-$elliptic bilinear form defined by 
%\begin{equation*}
$ \displaystyle a(w,v)=\int_\Omega D^2w:D^2v \dx.$
%\end{equation*} 
The weak formulation that corresponds to \eqref{P1 strong_form}-\eqref{P1 strong_icbc}  {seeks, for $t \in (0,T)$, a $u(t) \in H_0^2(\Omega)$} such that 
\begin{align}
   \begin{split}  (u_{tt},v)+a(u,v)&= \left(f,v\right)  \text{ for all }v \in H_0^2(\Omega), \label{P1 weak_form}\\  
    u(0)&=u^0 \text{ and } u_t(0)=v^0.
     \end{split}
\end{align}
{For notational simplicity, the dependency of functions on $t$ will be skipped in some instances (whenever there is no chance of confusion); for example, $u:=u(t)$ in \eqref{P1 weak_form}, etc.}  
Given $f \in L^2(L^2(\Omega))$, $u^0 \in H^2_0(\Omega)$, and $v^0 \in L^2(\Omega)$, there exists a unique $u \in C^0([0,T];H^2_0(\Omega)) \cap  C^1([0,T]; L^2(\Omega)) $ satisfying \eqref{P1 weak_form} (see \cite[ p. 93]{MR0350178}).
 
\paragraph{Regularity.}%\label{P1 regularity_section}
It is well-known (see, e.g., \cite[p. 761]{MR1115240}) that the eigenvalue problem
\begin{align}
\begin{cases}
    \Delta^2 \psi = \lambda \psi \qquad\qquad\;&\text{ in }\Omega,\\
     \hspace{0.05cm}\quad\psi =0,\ \frac{\partial \psi}{\partial n}=0 \;\;\quad&\text{ on } \partial \Omega,  \label{P1 Ev}
    \end{cases}
\end{align}
admits an increasing  sequence  of eigenvalues $\{\lambda_n \}_{n=1}^{\infty}$ with  $ 0<\lambda_1 \le \lambda_2 \le \cdots \le \lambda_n \rightarrow \infty \text{ as } n \rightarrow \infty$ and the corresponding family of eigenfunctions $\{\psi_n\}_{n=1}^{\infty}$ form an  orthonormal basis of $L^2(\Omega)$. Let us define an unbounded operator $(A, D(A))$ in $L^2(\Omega)$ by
$D(A) = \{ \psi \in H_0^2(\Omega) : \Delta^2 \psi \in L^2(\Omega)\} \text{ and }A \psi = \Delta^2 \psi$ for all $\psi \in D(A)$. Further, for any $r \in \mathbb {R}^+\cup \{0\}$, we define 
\begin{equation*}
    A^r w = \sum_{n=1}^ \infty \lambda_n^r(w, \psi_n)\psi_n \quad \text{and}\quad   D(A^r)= \bigg\{w= \sum_{n=1}^{\infty}w_n \psi_n: w_n \in \mathbb{R}, \ \sum_{n=1}^\infty \left|\lambda_n w_n \right|^{2r} < \infty \bigg\}. %\label{P1 A^r}
\end{equation*}
Moreover, the space $D(A^r)$ is equipped with the norm 
\begin{equation}
    \norm{w}_{D(A^r)}=\Big(\sum_{n=1}^ \infty \lambda_n^{2r}\left|(w, \psi_n)\right|^2 \Big)^{1/2}. \label{P1 norm_DA}
\end{equation}
In particular, $\|\cdot\|_{D(A^0)}=\|\cdot\|$ and $D(A^{1/2})=H^2_0(\Omega)$ and $D(A)\subset H^2_0(\Omega) \cap H^{2+\gamma_0}(\Omega)$. {Here and throughout the paper,  
 $\gamma_0\in(1/2,1]$ is the index of elliptic regularity of the biharmonic operator  (see further details in, e.g., \cite{MR0595625}).}

The next lemma presents regularity results for the continuous solution that are employed for the semidiscrete and fully-discrete error estimates proposed in this article. {We note that the regularity results for fourth-order wave equation with homogeneous boundary and initial conditions available in {\cite[Theorem 7.1]{MR0350178}} %\cite{MR2597943, MR1638130}
assume a higher smoothness for $f$ and its temporal derivatives. {More precisely, for any positive integer $l$,  $f \in L^2(H^{2(2l-1)}(\Omega)),$ {$\frac{\partial^{2l} f }{\partial t^{2l}}\in L^2(L^2(\Omega))$, where $\frac{\partial^{l} f}{\partial t^{l}} $} denotes the $l^{th}$ partial derivative of $f$ with respect to $t$,  and {$f|_{t=0}=\frac{\partial f}{\partial t}|_{t=0} = \cdots = \frac{\partial^{2l-1} f}{\partial t^{2l-1}}|_{t=0}=0$}, the same reference shows that   $u \in L^2(H^{2(2l+1)}(\Omega)) $ and ${\frac{\partial^{2 l+1} u }{\partial t^{2l +1}}} \in L^2(L^2(\Omega))$, with positive integer $l$.} In contrast, the regularity results in this article are established under (a) different smoothness assumptions on $f$ and its derivatives and (b) for non-homogeneous initial conditions. The proof of Lemma~\ref{P1 regularity_lemma}  presented in the Appendix is based on a solution representation approach. %should be noted that the assumptions are sufficient to obtain the bounds.
\begin{lemma}[Regularity]\label{P1 regularity_lemma} {Let  $\gamma_0 \in (1/2,1]$ be elliptic regularity index of the biharmonic operator}. The assumptions 
\begin{align*}
& u^0 \in  D(A^2), \ v^0 \in  D(A^{3/2}), \text{ and } 
f(0) \in  D(A), \; f_t(0) \in D(A^{1/2}),\; f_{tt}(0)\in L^2(\Omega),\\
 &\qquad\qquad\qquad\qquad\qquad\quad \;f,f_t,f_{tt},f_{ttt} \in L^2(L^2(\Omega))
\end{align*}
{guarantee that the norms 
$\norm{u}_{L^\infty(H^{2+\gamma_0}(\Omega))},$   $\norm{u_t}_{L^\infty(H^{2+\gamma_0}(\Omega))}, $ $\norm{u_{tt}}_{L^\infty(L^2(\Omega))}, $ $\norm{u_{tt}}_{L^\infty(H^{2+\gamma_0}(\Omega))}$, $ \norm{u_{ttt}}_{L^\infty(L^2(\Omega))}$,  and $\norm{u_{tttt}}_{L^\infty(L^2(\Omega))}$  are bounded.}
\end{lemma}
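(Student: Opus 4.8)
The plan is to exploit the spectral representation of the solution, following the explicit-solution-representation approach advertised before the statement. Expanding $u(t)=\sum_{n} u_n(t)\psi_n$ with $u_n(t)=(u(t),\psi_n)$ and testing \eqref{P1 weak_form} with $v=\psi_n$, the orthonormality of $\{\psi_n\}$ together with $A\psi_n=\lambda_n\psi_n$ decouples the problem into the scalar initial-value problems $u_n''(t)+\lambda_n u_n(t)=f_n(t)$, with $u_n(0)=(u^0,\psi_n)$, $u_n'(0)=(v^0,\psi_n)$, and $f_n(t)=(f(t),\psi_n)$. Writing $\omega_n=\sqrt{\lambda_n}$, Duhamel's principle furnishes the closed form
\[
u_n(t)=u_n(0)\cos(\omega_n t)+\frac{u_n'(0)}{\omega_n}\sin(\omega_n t)+\frac{1}{\omega_n}\int_0^t\sin(\omega_n(t-s))f_n(s)\,\mathrm{d}s,
\]
and $u_n',u_n''$ follow by differentiation. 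Recalling the norm \eqref{P1 norm_DA} and the embedding $D(A)\subset H^{2+\gamma_0}(\Omega)$, each of the six target norms is, up to the embedding constant, a supremum in $t$ of a weighted modal sum $\sum_n\lambda_n^{2r}\,|u_n^{(j)}(t)|^2$, so it suffices to bound these sums uniformly in $t$.

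I would first reduce the higher time-derivatives to an energy level. Differentiating the scalar ODE in time gives the recursion $u_n^{(j+2)}=-\lambda_n u_n^{(j)}+f_n^{(j)}$, whence $\sum_n|u_n''|^2\lesssim\sum_n\lambda_n^2|u_n|^2+\|f(t)\|^2$, $\sum_n|u_n'''|^2\lesssim\sum_n\lambda_n^2|u_n'|^2+\|f_t(t)\|^2$, and $\sum_n|u_n''''|^2\lesssim\sum_n\lambda_n^2|u_n''|^2+\|f_{tt}(t)\|^2$. Thus the estimates on $\|u_{tt}\|_{L^\infty(L^2)}$, $\|u_{ttt}\|_{L^\infty(L^2)}$, and $\|u_{tttt}\|_{L^\infty(L^2)}$ reduce to the three energy-level quantities $\sum_n\lambda_n^2|u_n|^2$, $\sum_n\lambda_n^2|u_n'|^2$, $\sum_n\lambda_n^2|u_n''|^2$ (which themselves control $\|u\|_{L^\infty(H^{2+\gamma_0})}$, $\|u_t\|_{L^\infty(H^{2+\gamma_0})}$, and $\|u_{tt}\|_{L^\infty(H^{2+\gamma_0})}$), plus uniform control of $\|f^{(j)}(t)\|$ for $j=0,1,2$. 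The latter comes from $f^{(j)}(t)=f^{(j)}(0)+\int_0^t f^{(j+1)}(s)\,\mathrm{d}s$, the assumptions $f(0),f_t(0),f_{tt}(0)\in L^2(\Omega)$ and $f,f_t,f_{tt},f_{ttt}\in L^2(L^2(\Omega))$, and Cauchy--Schwarz.

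For the three energy-level sums I would substitute the Duhamel formula and its derivatives. The initial-data contributions are immediate: for instance, in $\sum_n\lambda_n^2|u_n''|^2$ they are $\sum_n\lambda_n^4|u_n(0)|^2=\|u^0\|_{D(A^2)}^2$ and $\sum_n\lambda_n^3|u_n'(0)|^2=\|v^0\|_{D(A^{3/2})}^2$, which is precisely why $u^0\in D(A^2)$ and $v^0\in D(A^{3/2})$ are imposed. The heart of the argument, and the main obstacle, is the Duhamel convolution term: a direct Cauchy--Schwarz bound loses too many powers of $\omega_n$ to be summable against $\lambda_n^2$. The remedy is to integrate by parts in $s$ repeatedly, each pass transferring one time-derivative onto $f_n$ and gaining a factor $\omega_n^{-1}=\lambda_n^{-1/2}$, while producing boundary terms at $s=0$ and $s=t$. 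One repeats this until the power of $\lambda_n$ weighting the residual integral is nonpositive, so that Cauchy--Schwarz finally bounds it by $\int_0^t\|f^{(k)}(s)\|^2\,\mathrm{d}s$.

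The delicate bookkeeping is then to match each boundary term to the right hypothesis. The terms at $s=0$ yield $f^{(k)}(0)$ weighted by a power of $\lambda_n$, summing to $\|f^{(k)}(0)\|_{D(A^{\cdot})}^2$, and these are matched exactly by $f(0)\in D(A)$, $f_t(0)\in D(A^{1/2})$, $f_{tt}(0)\in L^2(\Omega)$; the terms at $s=t$ yield $f^{(k)}(t)$, controlled in $L^2$ as in the previous paragraph. For $\sum_n\lambda_n^2|u_n''|^2$ three integrations by parts are needed, consuming exactly the assumed regularity up to $f_{ttt}\in L^2(L^2(\Omega))$, while $\sum_n\lambda_n^2|u_n|^2$ and $\sum_n\lambda_n^2|u_n'|^2$ require one and two passes, respectively. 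The formal manipulations are made rigorous by working first with the finite partial sums $\sum_{n\le N}$ and passing to the limit through the uniform-in-$N$ bounds thus obtained. The genuine difficulty is this exact accounting, tracking which boundary term at $s=0$ consumes which regularity hypothesis and verifying that every residual integral is summable, rather than any isolated hard estimate.
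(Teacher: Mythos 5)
Your proposal is correct and follows essentially the same route as the paper's proof in the Appendix: spectral expansion into the decoupled ODEs $d_n''+\lambda_n d_n=f_n$, the explicit Duhamel formula, and one, two, or three integrations by parts in the convolution term so that each boundary term at $s=0$ is absorbed by $f(0)\in D(A)$, $f_t(0)\in D(A^{1/2})$, $f_{tt}(0)\in L^2(\Omega)$ and each residual integral by the $L^2(L^2(\Omega))$ hypotheses --- your pass counts and the matching of hypotheses agree exactly with the paper's computations. The only (harmless) difference is organizational: you use the recursion $u_n^{(j+2)}=-\lambda_n u_n^{(j)}+f_n^{(j)}$ to reduce the three $L^\infty(L^2(\Omega))$ bounds to the three $D(A)$-level sums, whereas the paper differentiates the closed-form solution directly for each of the six norms.
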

\begin{rem}[Initial Data]
    {The approximation of the initial conditions for the semidiscrete scheme in \eqref{P1 semi_formulation} demands  that the initial data $u^0$ and $v^0$  {belong} to $H_0^2(\Omega)$. For the fully-discrete schemes, this is further relaxed to $u^0 \in H_0^2(\Omega)$ and  $v^0 \in L^2(\Omega)$.}
\end{rem}

\section{Semidiscrete  Error Analysis}\label{P1 semi_errors_section}
An outline of this section is as follows. Subsection \ref{P1 semi_schemes} describes  lowest-order FE schemes for the spatial variable. The Morley interpolation operator, the companion operator, and a modified Ritz projection operator are essential tools for the analysis and are discussed in Subsection \ref{P1 ritz_projection_section}. Finally, semidiscrete error estimates are derived in Subsection \ref{P1 semi_errors_section1}. 
\subsection{Space Discretization}\label{P1 semi_schemes}
%\textbf{Morley space.} 
 For a generic triangle $K \in {\cal T}$ of the shape-regular triangulation $\mathcal{T}$ of $\bar{\Omega}$,  let $h_K$ denote its diameter, $|K|$ its area, $n_K$  be outward unit normal along $\partial K$,  and let $h:= \underset {K \in \mathcal{T}}{{\max }}h_K $. Let ${\mathcal V}(\Omega)$ (resp. ${\mathcal V}(\partial \Omega))$ denote the set of all interior (resp. boundary) vertices of  ${\cal T}$ and let ${\mathcal V}= {\mathcal V}(\Omega) \cup {\mathcal V}(\partial \Omega)$.  Let ${\mathcal E}(\Omega)$ (resp. ${\mathcal E}(\partial \Omega)$) denote the set of all interior (resp. boundary)  {edges} of  ${\cal T}$ and let ${\mathcal E}= {\mathcal E}(\Omega) \cup {\mathcal E}(\partial \Omega)$. For any edge $e \in {\mathcal E}$, we define its edge patch ${\omega(e)}$ by $ \text{int} \: (K_+ \cup K_-)$ if $e =\partial K_+ \cap \partial K_-  \in {\cal E}(\Omega)$ and $\text{int}\: (K) \text{ if } e \in {\cal E}(\partial \Omega)$. Let $K_+$ and $K_-$ be adjacent triangles with unit normal vector $n_{K_+}|_e =n|_e=-n_{K_-}|_e$ along edge $e$ pointing outside from $K_+$ to $K_-$. Define jump of a function $\varphi$, $ \big[\!\!\big[\varphi\big]\!\!\big]$ by  $  \varphi |_{K_{+}}-  \varphi |_{K_{-}}\text{ if } e=\partial K_+ \cap \partial K_-  \in {\cal E}(\Omega)$ and $\varphi|_e  \text{ if } e \in {\cal E}(\partial \Omega)$. Further, define average $\big\{\!\!\!\big\{\varphi\big\}\!\!\!\big\}$ by $\frac{1}{2} (\varphi |_{K_{+}}+  \varphi|_{K_{-}} )\text{ if } e=\partial K_+ \cap \partial K_-  \in {\cal E}(\Omega)$ and  $\varphi|_e  \text{ if } e \in {\cal E}(\partial \Omega)$.

\medskip  
After defining the space 
\begin{align*}
{\rm M}'({\cal T}):=\{& {v_{\rm M}} \in P_2({\cal T}): 
 {v_{\rm M}} \text{ is continuous}
\text{ at interior vertices and its normal derivatives}\\
&\text{ are continuous at the midpoints of interior edges} \},\end{align*}
we recall from \cite{MR4376276} the definition of the nonconforming Morley FE space ${{\rm M}({\cal T})}$ below 
\begin{align*}
{{\rm M}({\cal T})}:=\{& {v_{\rm M}} \in {\rm M}'({\cal T}):
 {v_{\rm M}} \text{ vanishes at the vertices of }\partial \Omega
\text{ and its normal derivatives}\\
&\text{ vanish at the midpoints of boundary edges of  $\partial \Omega$} \}.
\end{align*}
Let $V_h \subset {\cal H}^2({\cal T})$ be a finite-dimensional subspace and let $a_h(\cdot, \cdot):(V_h+{\rm M}({\cal T})) \times (V_h+ {\rm  M}({\cal T})) \rightarrow \mathbb{R}$ be a symmetric, continuous, and elliptic bilinear form with respect to a mesh-dependent (broken) norm on $V_h$ defined by 
%t. On a finite-dimensional space $V_h\subset H^2(\mathcal{T})$, we also define
\begin{equation}
\norm{v_h}_{h}^2=\sum_{K\in \mathcal{T}}|v_h|_{H^2(K)}^2 + \sum_{e\in\mathcal{E}}\sum_{z\in\mathcal{V}(e)}h_e^{-2}\big|\big[\!\!\big[   v_h\big]\!\!\big](z)\big|^2+ \sum_{e\in\mathcal{E}}\Big| \intbar_{e} \Big[\!\!\Big[  \frac{\partial v_h}{\partial n}\Big]\!\!\Big]\ds \Big|^2, \label{P1 mesh_norm}
\end{equation}
see, e.g.,  \cite{MR3371900}. In other words, there exist $ \alpha, \beta >0$ such that for all $w_h, v_h \in V_h$
\begin{equation}
    a_h(w_h,v_h)=a_h(v_h,w_h), \quad \alpha  \|w_h\|_h^2 \le a_h(w_h,w_h), \quad a_h(w_h,v_h) \le \beta \|w_h\|_h\|v_h\|_h.\label{P1 a_h_properties}
\end{equation}  
Note that the continuity in \eqref{P1 a_h_properties} holds in $V+ V_h$. The semidiscrete formulation that corresponds to \eqref{P1 weak_form} seeks $u_h(\cdot,t):(0,T] \rightarrow V_h $ such that
\begin{align}
\begin{split}
    (u_{htt},v_h )+a_h(u_h,v_h)& =(f,v_h) \text{ for all }v_h \in V_h, \\  
  { u_h(0)={\cal R}_hu^0} &\text{ and }
   {u_{ht}(0)={\cal R}_hv^0,}
   \end{split}
      \label{P1 semi_formulation}
\end{align}
{where ${\cal R}_h$ is a {\it modified Ritz projection} defined in \eqref{P1 ritz_projection}.} {Note that the definition of the semidiscrete formulation assumes that the initial data $u^0$ and $v^0$ belongs to $H^2_0(\Omega)$. }

\medskip 
Examples of some lowest-order FEMs with choices for $V_h$, $a_h(\cdot,\cdot)$, and the corresponding norms are given below.

\medskip 
 \begin{example}[Morley FEM] \label{ex1}
%\noindent\underline{\textbf{Example 1}-(Morley)}. 
We can choose $V_h:=\rm{M({\cal T})} $ and define the discrete bilinear form 
\begin{equation}
    a_h(w_h,v_h):=a_{\text{\rm pw}}(w_h,v_h):= \int_\Omega D^2_{{\rm pw}}w_h:D^2_{\rm{pw}}v_h \dx. \label{P1 Morley}
\end{equation}
For all $v+v_h \in V+  {\rm M}({\cal T})$, the discrete norm reads $\trinr v+ v_h\trinr_{\text{\rm pw}}:=a_{\rm pw }(v + v_h, v+ v_h)^{1/2}$.
\end{example}
\begin{example}[dG FEM]%\label{ex2}
%\noindent\underline{\textbf{Example 2}-(dG)}.
Let us now choose $V_h:=P_2({\cal T})$ and define the discrete bilinear form 
\begin{equation*}
a_h(w_h,v_h):=a_{\text{\rm pw}}(w_h,v_h) + b_h(w_h,v_h)+c_{\text{\rm dG}}(w_h,v_h), %\label{p1 dg}
\end{equation*}
where $a_{\text{\rm pw}}(w_h,v_h)$ is defined  in \eqref{P1 Morley}, and for $v_h,w_h \in P_2({\cal T})$, $b_h(\cdot,\cdot)$ and $c_{\text{\rm dG}}(\cdot,\cdot)$ are defined by 
\begin{subequations}
\begin{align}
    b_h(w_h,v_h)& :=-\sum_{e \in {\cal E}} \int_e \big[\!\!\big[\nabla w_h\big]\!\!\big] \cdot \big\{\!\!\!\big\{D^2_{\rm{pw}}v_h\big\}\!\!\!\big\}n \ds-\sum_{e \in {\cal E}} \int_e \big[\!\!\big[\nabla v_h\big]\!\!\big] \cdot \big\{\!\!\!\big\{D^2_{\text{\rm pw}}w_h\big\}\!\!\!\big\}n \ds, \label{P1 bh} \\
    c_{\text{\rm dG}}(w_h,v_h)& :=\sum_{e \in {\cal E}} \frac{\sigma^1_{\rm{dG}}}{h_e^3} \int_e\big[\!\!\big[ w_h\big]\!\!\big]  \big[\!\!\big[v_h\big]\!\!\big]\ds  +\sum_{e \in {\cal E}} \frac{\sigma^2_{\rm{dG}}}{h_e} \int_e  \Big[\!\!\Big[\frac{\partial w_h}{\partial n} \Big]\!\!\Big] \Big[\!\!\Big[\frac{\partial v_h}{\partial n} \Big]\!\!\Big]\ds, \label{P1 cdg}
    \end{align}\end{subequations}
where $\sigma^1_{\rm{dG}}, \sigma^1_{\rm{dG}}>0$ are  penalty parameters. 
The dG norm $ \| \cdot\|_{\rm{dG}}$ on $V_h$ is defined by 
\[ \| v_h\|_{\rm{dG}}:=\left(\trinr v_h\trinr_{\rm{pw}}^2+c_{\rm{dG}}(v_h,v_h)\right)^{1/2}.\] 
\end{example}
%\noindent\underline{\textbf{Example 3}-
\begin{example}[C$^0$IP] \label{ex3}  Choose $V_h:= P_2({\cal T}) \cap H^1_0(\Omega)$ and define 
%\begin{equation}
 $   a_h(\cdot,\cdot):=a_{\rm{pw}}(w_h,v_h) + b_h(w_h,v_h)+c_{\rm{IP}}(w_h,v_h) $,
%\end{equation}
where $a_{\rm{pw}}(\cdot,\cdot),\;b_h(\cdot,\cdot)$ are defined in  \eqref{P1 Morley}, \eqref{P1 bh}, respectively, and for $v_h, w_h \in P_2({\cal T}) \cap H^1_0(\Omega) ,$ we define 
\begin{equation*}
\displaystyle c_{\rm{IP}}(w_h,v_h) :=   \sum_{e \in {\cal E}} \frac{\sigma_{\rm{IP}}}{h_e} \int_e  \Big[\!\!\Big[\frac{\partial w_h}{\partial n} \Big]\!\!\Big] \Big[\!\!\Big[\frac{\partial v_h}{\partial n} \Big]\!\!\Big]\ds, 
\end{equation*}
with the penalty parameter $\sigma_{\rm{IP}}>0$. 
The norm $\|\cdot\|_{\rm{IP}}$ on $P_2({\cal T}) \cap H^1_0(\Omega)$ is defined by 
\[\| v_h\|_{\rm{IP}}=\left(\trinr v_h\trinr_{\rm{pw}}^2+c_{\rm{IP}}(v_h,v_h)\right)^{1/2}.\]
\end{example}
\begin{rem}[Properties of discrete bilinear forms \cite{MR4376276}] The three norms $\|\cdot \|_{\rm{pw}}$,  $\|\cdot \|_{\rm{dG}}$, and  $\|\cdot \|_{\rm{IP}}$ are equivalent to the norm $\|\cdot\|_h$ defined in \eqref{P1 mesh_norm} and the bilinear form $a_h(\cdot,\cdot)$ defined in each case is symmetric, elliptic, and continuous with respect to $\|\cdot\|_h$, for sufficiently large  penalty parameters for the dG and $C^0$IP schemes. 
\end{rem}
\subsection{Modified Ritz Projection }\label{P1 ritz_projection_section}
 The error control in the semidiscrete and fully-discrete approximation is estimated with the help of the Ritz projection operator ${\cal R}_h$ defined from $H^2_0(\Omega)$ to the conforming finite dimensional space ~\cite[Chapter 1]{thomee}. However, the standard definition  
 \begin{equation}
     a_h({\cal R}_h w, v_h) := a(w,v_h) \qquad \text{for all $v_h \in V_{h}$}, \label{rtz}
     \end{equation}
 does not hold for   $v_h \in V_h \subset H^2({\mathcal T})$ for the nonstandard schemes discussed in this paper since the bilinear form $a(\cdot,\cdot)$ may not be defined for functions in $V_h$.  
 
 \medskip
 The alternative ideas that define Ritz projections for nonconforming methods (see for example,   \cite{MR4305273} for the fourth-order nonlinear parabolic extended Fisher--Kolmogorov equation), typically assume higher regularity of the solution in space. In this article we resolve this issue by means of a modified Ritz projection (see (\ref{P1 ritz_projection}) below) that employs a smoother $Q: V_h \rightarrow H^2_0(\Omega)$. Such a smoother $Q$ is defined as $JI_{\rm M}$, where $J$ (resp. $I_{\rm M}$) is the companion (resp. extended Morley interpolation)  operator defined in Lemma~\ref{bhcompanion_lem} (resp. Definition~\ref{bhmorley_lem}) below. 

\begin{defn}[Morley Interpolation% and properties
\cite{MR4376276}]\label{bhmorley_lem}
For all $v_{\rm{pw}}\in H^2(\mathcal T)$, the extended Morley interpolation operator $I_{\rm M}:H^2({\mathcal T} ) \rightarrow {\rm M}({\mathcal T} )$ is defined by 
\[ %v_{\rm{pw}}\mapsto 
 (I_{\rm {M}}v_{\rm{pw}})(z):= 
 |\mathcal T(z)|^{-1}
\sum_{K \in {\mathcal T}(z)} (v_{\rm{pw}}|_K)(z)\quad \text{and}\quad \displaystyle \fint_e\frac{\partial (I_{\rm M} v_{\rm{pw}})}{\partial n} \ds :=  \fint_e \Big\{\!\!\!\Big\{\frac{\partial v_{\rm{pw}}}{\partial n}\Big\}\!\!\!\Big\} \ds .\]
In case of an interior vertex $z$, ${\mathcal T}(z)$ represents the collection of attached triangles, and $|{\mathcal T}(z)|$ indicates the number of such triangles connected to vertex $z$.  
\end{defn}
%%%%%%%%%%%%%%%%%%%%%%%%%%%%%%%%%%%%%%%%%%%%%%%%%%%%%%%
%%%%%%%%%%%%%%%%%%%%%%%%%%%%%%%%%%%
\begin{lemma}[Companion Operator and Properties \cite{carsput2020,MR4376276}]\label{bhcompanion_lem}
Let ${\rm HCT}(\mathcal{T})$ denote the Hsieh--Clough--Tocher FE. %\cite[Chapter 6]{MR1930132}. 
There exists a linear mapping $J: {\rm M}(\mathcal{T})\to ({\rm HCT}(\mathcal{T})+P_8(\mathcal{T})) \cap H^2_0(\Omega)$ such that any $w_{\rm M}\in {\rm M}(\mathcal{T})$ satisfies
\begin{align}
%\begin{aligned}
Jw_{\rm M}(z)&=w_{\rm M}(z) \qquad \qquad\qquad\qquad\qquad \text{ for } z\in\mathcal{V}, \nonumber \\ 
 \nabla ({J}w_{\rm M})(z)&=
|\mathcal{T}(z)|^{-1}\sum_{K\in\mathcal{T}(z)}(\nabla w_{\rm M}|_K)(z)
\quad\; \text{ for }z\in\mathcal{V}(\Omega),  \nonumber\\
 \fint_e \frac{\partial J w_{\rm M}}{\partial n} \ds&=\fint_e \frac{ \partial w_{\rm M}}{\partial n} \ds \qquad\qquad\qquad\qquad \text{ for any } e\in\mathcal{E},\nonumber\\ 
 \trinl w_{\rm M}- J w_{\rm M} \trinr_{\rm pw} &\lesssim  \min_{v\in  H^2_0(\Omega)}  \trinl w_{\rm M}- v \trinr_{\rm pw}, \nonumber \\
 \|{v_h -Qv_h}\|_{H^s({\cal T})} &\le C_1 h^{2-s}\min_{v\in  H^2_0(\Omega)}\norm{v- v_h}_h\quad\quad\text{ for }C_1>0 \text{ and }0 \le s \le 2.  \label{P1 norm_Q}
\end{align} 
\end{lemma}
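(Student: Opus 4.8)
The plan is to construct $J$ in two stages: first an $\mathrm{HCT}$-companion obtained by prescribing the degrees of freedom of the Hsieh--Clough--Tocher element, and then a $P_8$ bubble correction that repairs the local moment properties without disturbing either the conformity or the prescribed values. For the first stage, recall that $\mathrm{HCT}(\mathcal T)$ is $C^1$ (hence $H^2$-conforming) and is unisolvent with respect to vertex values, vertex gradients, and edge integral normal derivatives. Given $w_{\rm M}\in{\rm M}(\mathcal T)$, I would define $J_1 w_{\rm M}\in\mathrm{HCT}(\mathcal T)$ by setting its vertex values equal to $w_{\rm M}(z)$, its interior-vertex gradients equal to the average $|\mathcal T(z)|^{-1}\sum_{K\in\mathcal T(z)}(\nabla w_{\rm M}|_K)(z)$, and its edge integral normal derivatives equal to $\fint_e\partial w_{\rm M}/\partial n\,\ds$. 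At the boundary, since $w_{\rm M}$ vanishes at boundary vertices and has vanishing normal-derivative means on boundary edges, the corresponding $\mathrm{HCT}$ degrees of freedom are set to zero; because the edge trace and edge normal derivative of an $\mathrm{HCT}$ function are fixed by these endpoint and edge data, both $J_1 w_{\rm M}$ and $\nabla J_1 w_{\rm M}$ vanish identically on $\partial\Omega$, forcing $J_1 w_{\rm M}\in H^2_0(\Omega)$. By construction this yields exactly properties (i), (ii), and (iii).

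The second stage sets $J w_{\rm M}:=J_1 w_{\rm M}+J_2 w_{\rm M}$, where $J_2 w_{\rm M}\in P_8(\mathcal T)\cap H^2_0(\Omega)$ is a sum of elementwise bubble functions that, together with their gradients, vanish on every edge (so that $C^1$-conformity and (i)--(iii) are untouched) and that are chosen to enforce a fixed finite set of moment conditions, such as $\int_K(w_{\rm M}-Jw_{\rm M})\,p\,\dx=0$ for $p$ in a prescribed polynomial space. The degree $8$ provides enough interior bubble degrees of freedom on each triangle to solve the resulting well-conditioned local linear systems; this moment preservation is what later makes $J$ behave as a genuine quasi-interpolant in the consistency estimates used elsewhere.

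For the approximation bound (iv), I would argue triangle by triangle. On each $K$, the difference $w_{\rm M}-Jw_{\rm M}$ has vanishing vertex values, vanishing averaged vertex gradients, and vanishing edge normal-derivative means; writing these as local functionals and invoking a Bramble--Hilbert/scaling argument bounds $|w_{\rm M}-Jw_{\rm M}|_{H^2(K)}$ by the jumps of $w_{\rm M}$ and $\nabla w_{\rm M}$ across the edges of the patch $\omega(e)$. For any conforming $v\in V$ the jumps of $w_{\rm M}$ coincide with the jumps of $w_{\rm M}-v$, which are controlled via trace and inverse inequalities by $\trinl w_{\rm M}-v\trinr_{\rm pw}$; summing over $K$ and minimizing over $v$ gives (iv).

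Finally, for (v) recall $Q=JI_{\rm M}$ and split $v_h-Qv_h=(v_h-I_{\rm M}v_h)+(I_{\rm M}v_h-JI_{\rm M}v_h)$. The first term is handled by the Morley interpolation estimates and the second by (iv) applied to $w_{\rm M}=I_{\rm M}v_h$, both in the energy ($s=2$) norm, with the best conforming approximation $\min_{v\in V}\norm{v-v_h}_h$ appearing on the right after bounding the interpolation and companion errors of $I_{\rm M}v_h$ by those of $v_h$ itself. To reach arbitrary $0\le s\le 2$ I would combine these with Poincaré-type estimates and local inverse inequalities on $V_h+{\rm M}(\mathcal T)$, valid on a shape-regular mesh, exploiting that $v_h-Qv_h$ is piecewise polynomial with vanishing low-order data to trade powers of $h$ and produce the factor $h^{2-s}$. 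I expect the main obstacle to be precisely this uniform-in-$s$ estimate: one must track how the $P_8$ enrichment $J_2$ behaves across the full Sobolev scale and verify that the inverse-estimate trade-off does not forfeit the optimal $h$-power, which is delicate because $Q$ mixes the bubble part with the $\mathrm{HCT}$ part in every norm.
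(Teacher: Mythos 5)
The paper does not prove this lemma at all: it is quoted verbatim from the cited references \cite{carsput2020,MR4376276}, and your construction (HCT interpolation of averaged Morley vertex/gradient/edge data plus a $P_8$ bubble correction for the moment conditions, a Bramble--Hilbert/jump argument for (iv), and the splitting $v_h-Qv_h=(v_h-I_{\rm M}v_h)+(I_{\rm M}v_h-JI_{\rm M}v_h)$ with elementwise scaling for (v)) is essentially the one given there. Your worry about uniformity in $s$ is not a real obstacle, since only integer $s$ (in fact only $s=0$ in this paper) is used and the $h^{2-s}$ factor follows from a per-element Poincar\'e-type scaling applied to a difference with vanishing nodal and edge data.
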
 
\noindent
Next, we define the {\it modified} Ritz projection $\mathcal{R}_h: H^2_0(\Omega)\rightarrow V_h $  as follows: 
\begin{equation}
    a_h({\cal R}_hw,v_h )=a(w, Qv_h )\qquad  \text{ for all } v_h \in V_h,\, w \in H^2_0(\Omega)\label{P1 ritz_projection}.
\end{equation}
%This operator is well-defined by the Lax-Milgram lemma. 
The approximation properties of $\mathcal{R}_h$ in the piecewise energy and $L^2-$norms hold under the sufficient conditions in {\bf (H1)}-{\bf (H5)} listed below (see also \cite{MR4376276}): 
\begin{description}
\item[(H1)] All $v_{\rm{M}} \in {\rm{M(\cal T)}}$, $ w_h \in V_h$, and $v ,w\in H^2_0(\Omega)$ satisfy
\begin{align*}
a_{\rm{pw}}(v_{\rm{M}}, w_h - I_{\rm{M}} w_h) + b_h(v_{\rm{M}}, w_h- I_{\rm{M}} w_h) \lesssim
\trinl v_{\rm{M}}-v \trinr_{\rm{pw}} \|w_h -w\|_h.    \end{align*}
\item[(H2)] All $v_{\rm{M}},w_{\rm{M}} \in {\rm{M(\cal T)}}$ and $ v_h, w_h \in V_h$ satisfy
\begin{align*}
 b_h(v_{\rm{M}},w_{\rm{M}})&=0 \\ 
b_h(v_h + v_{\rm{M}}, w_h+ w_{\rm{M}}) &\lesssim \|v_h+v_{\rm{M}}\|_h \|w_h+ w_{\rm{M}}\|_h.
\end{align*}
\item[(H3)] For all $v_h, w_h \in V_h$  and $ v,w \in H^2_0(\Omega)$, there holds 
\[c_h(v_h, w_h) \lesssim \| v - v_h\|_h  \|w- w_h\|_h.\]
\item[(H4)] All  $v_h \in V_h$, $w_{\rm{M}} \in {\rm{M(\cal T)}}$, {and  $v,w \in H^2_0(\Omega)$} satisfy  
\begin{equation*}
 a_{\rm{pw}} ( v_h -I_{\rm{M}} v_h, w_{\rm{M}})+b_h( v_h, w_{\rm{M}})\lesssim \| v-v_h \|_h   \trinl w- w_{\rm{M}} \trinr_{\rm{pw}} . 
\end{equation*}
\item[(H5)] All  $v_{\rm{M}} \in {\rm{M(\cal T)}}$, $w_h \in V_h$, {and  $v,w \in H^2_0(\Omega)$} satisfy  
\begin{equation*}
 a_{\rm{pw}} ( v_{\rm{M}}, w_h -I_{\rm{M}} w_h)+b_h(  v_{\rm{M}},w_h -I_{\rm{M}} w_h )\lesssim   \trinl v- v_{\rm{M}} \trinr_{\rm{pw}} \| w-w_h \|_h  . 
\end{equation*}
\end{description}
The lowest-order methods listed in Examples~\ref{ex1}-\ref{ex3} satisfy {\bf (H1)}-{\bf (H5)}, see \cite[Sections 7-8]{MR4376276}. 
%The approximation properties of $\mathcal{R}_h$ in $L^2$ and energy norms are given in the following lemma. 
\begin{lemma}[Approximation Properties \cite{MR4376276}]\label{P1 ritz_lemma} For any $v\in H^2_0(\Omega),$ let ${\mathcal R}_h v \in {\rm M}({\mathcal T})$ be its Ritz's projection defined in \eqref{P1 ritz_projection} and suppose that the hypotheses {\bf (H1)}--{\bf (H5)} hold. %If  $\gamma_0 \in (1/2,1]$ is elliptic regularity index of the biharmonic operator then  
Then there exists  $C_2>0$ such that 
\begin{equation}
   { \|{v-\mathcal{R}_hv}\| + h^{\gamma_0}\norm{v-\mathcal{R}_hv}_h \le C_2 h^{2\gamma_0}\norm{v}_{H^{2+\gamma_0 }(\Omega)}.}  \label{P1 norm_ritz}
\end{equation}
\end{lemma}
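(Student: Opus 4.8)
The bound \eqref{P1 norm_ritz} comprises an energy-norm estimate $\norm{v-\mathcal{R}_hv}_h \lesssim h^{\gamma_0}\norm{v}_{H^{2+\gamma_0}(\Omega)}$ and an $L^2$ estimate $\norm{v-\mathcal{R}_hv}\lesssim h^{2\gamma_0}\norm{v}_{H^{2+\gamma_0}(\Omega)}$, the second following from the first by duality. The common engine is the consistency mismatch between the discrete form and the continuous form evaluated on smoothed arguments. First I would record, for all $w_h,v_h\in V_h$, the estimate
\[ \big|a_h(w_h,v_h)-a(Qw_h,Qv_h)\big| \lesssim d(w_h)\,\norm{v_h}_h + \norm{w_h}_h\, d(v_h), \qquad d(z_h):=\min_{z\in H^2_0(\Omega)}\norm{z-z_h}_h, \]
whose proof splits $a_h=a_{\rm pw}+b_h+c_h$, inserts the Morley interpolant $I_{\rm M}$ and the companion $J$ (so that $Q=JI_{\rm M}$), and matches the piecewise, jump and stabilization contributions using hypotheses \textbf{(H1)}--\textbf{(H5)}, the orthogonality \textbf{(H2)}(i), and the approximation property \eqref{P1 norm_Q} of $Q$.

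For the energy estimate I would exploit the ellipticity in \eqref{P1 a_h_properties}. Fix $v_h\in V_h$ and set $\eta_h:=\mathcal{R}_hv-v_h\in V_h$; then, using the defining relation \eqref{P1 ritz_projection},
\[ \alpha\norm{\eta_h}_h^2 \le a_h(\mathcal{R}_hv-v_h,\eta_h) = a(v,Q\eta_h)-a_h(v_h,\eta_h) = a(v-Qv_h,Q\eta_h) + \big[a(Qv_h,Q\eta_h)-a_h(v_h,\eta_h)\big]. \]
The first term is controlled by Cauchy--Schwarz and the $Q$-stability $\norm{Q\eta_h}_h\lesssim\norm{\eta_h}_h$ from \eqref{P1 norm_Q}, and the bracket by the consistency estimate above. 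Dividing by $\norm{\eta_h}_h$, adding and subtracting $v$, and taking $v_h$ to be a quasi-optimal approximation in $V_h$ yields the C\'ea-type bound $\norm{v-\mathcal{R}_hv}_h\lesssim \inf_{v_h\in V_h}\norm{v-v_h}_h$, which combined with the standard Morley interpolation estimate $\inf_{v_h}\norm{v-v_h}_h\lesssim h^{\gamma_0}\norm{v}_{H^{2+\gamma_0}(\Omega)}$ gives the energy bound.

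The $L^2$ estimate proceeds by an Aubin--Nitsche duality tailored to the nonconformity. With $e:=v-\mathcal{R}_hv$, let $\psi\in H^2_0(\Omega)$ solve $a(\psi,w)=(e,w)$ for all $w\in H^2_0(\Omega)$; elliptic regularity gives $\norm{\psi}_{H^{2+\gamma_0}(\Omega)}\lesssim\norm{e}$. Writing $\norm{e}^2=(e,v)-(e,\mathcal{R}_hv)$, I would replace $\mathcal{R}_hv$ by its smoothing $Q\mathcal{R}_hv\in H^2_0(\Omega)$ (the defect $(e,\mathcal{R}_hv-Q\mathcal{R}_hv)$ being of order $h^{2+\gamma_0}$ by \eqref{P1 norm_Q} and the already-proven energy bound), and use the dual problem to convert the $L^2$ pairings into $a(\psi,\cdot)$. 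The symmetry of $a_h$ and the defining relation \eqref{P1 ritz_projection} applied to both $v$ and $\psi$ give the crucial identity $a(\psi,Q\mathcal{R}_hv)=a_h(\mathcal{R}_h\psi,\mathcal{R}_hv)=a(v,Q\mathcal{R}_h\psi)$, collapsing the main contribution to $a(v,\psi-Q\mathcal{R}_h\psi)$. Setting $V_c:=Q\mathcal{R}_hv$ and $\Psi_c:=Q\mathcal{R}_h\psi$ and splitting $a(v,\psi-\Psi_c)=a(v-V_c,\psi-\Psi_c)+a(V_c,\psi-\Psi_c)$, the first summand is the product of two energy errors, each $O(h^{\gamma_0})$, while $a(V_c,\psi-\Psi_c)=a_h(\mathcal{R}_hv,\mathcal{R}_h\psi)-a(V_c,\Psi_c)$ is precisely a consistency term, again $O(h^{2\gamma_0})$. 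Collecting the pieces yields $\norm{e}^2\lesssim h^{2\gamma_0}\norm{v}_{H^{2+\gamma_0}(\Omega)}\norm{e}$, hence the claim.

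The principal obstacle is the consistency estimate for $a_h-a(Q\cdot,Q\cdot)$: reconciling the piecewise Hessian, the jump penalties in $c_h$, and the mixed term $b_h$ against the interpolation and companion errors requires exactly the structural hypotheses \textbf{(H1)}--\textbf{(H5)} and the properties (i)--(v) of $J$ from Lemma~\ref{bhcompanion_lem}, as developed in \cite{MR4376276}. A secondary subtlety is that only the reduced elliptic-regularity index $\gamma_0\in(1/2,1]$ is available for the biharmonic operator, so the duality argument delivers $h^{2\gamma_0}$ rather than $h^2$; consequently one must verify that every auxiliary remainder (notably $\mathcal{R}_hv-Q\mathcal{R}_hv$) stays strictly above order $h^{2\gamma_0}$, which holds since $2+\gamma_0>2\gamma_0$.
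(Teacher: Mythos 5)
The paper itself offers no proof of this lemma: it is imported verbatim from \cite{MR4376276}, so the only meaningful comparison is against the argument in that reference. Your overall architecture does match it — a C\'ea-type best-approximation bound in the energy norm driven by the defining relation \eqref{P1 ritz_projection}, followed by an Aubin--Nitsche duality with reduced regularity index $\gamma_0$; and the algebraic identity $a(\psi,Q\mathcal{R}_hv)=a_h(\mathcal{R}_h\psi,\mathcal{R}_hv)=a(v,Q\mathcal{R}_h\psi)$ you extract from symmetry is correct and is exactly the right pivot for the $L^2$ step.

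There is, however, a genuine gap in the form of your consistency estimate, and it propagates into both halves of the proof. You bound the defect by $|a_h(w_h,v_h)-a(Qw_h,Qv_h)|\lesssim d(w_h)\norm{v_h}_h+\norm{w_h}_h\,d(v_h)$. In the C\'ea argument you apply this with $w_h$ a quasi-optimal approximation of $v$ (so $\norm{w_h}_h\approx \norm{v}_{H^2(\Omega)}=O(1)$) and $v_h=\eta_h$ the discrete error; since the only generic bound is $d(\eta_h)\le\norm{\eta_h}_h$, the term $\norm{w_h}_h\,d(\eta_h)$ yields $\alpha\norm{\eta_h}_h^2\lesssim(\cdots)\norm{\eta_h}_h+\norm{v}_{H^2(\Omega)}\norm{\eta_h}_h$, i.e.\ only the stability bound $\norm{\eta_h}_h\lesssim\norm{v}_{H^2(\Omega)}$, not $O(h^{\gamma_0})$. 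Likewise in the duality step the term $\norm{\mathcal{R}_hv}_h\,d(\mathcal{R}_h\psi)$ is only $O(h^{\gamma_0})\norm{v}_{H^{2+\gamma_0}(\Omega)}\norm{e}$, one power of $h^{\gamma_0}$ short of the claim. What the hypotheses actually deliver — note that \textbf{(H1)}, \textbf{(H3)}, \textbf{(H4)}, \textbf{(H5)} all bound defects by \emph{products of two} approximation errors — is the stronger, bilinear form $|a_h(w_h,v_h)-a(Qw_h,Qv_h)|\lesssim d(w_h)\,d(v_h)$ (up to data oscillation). Obtaining it requires the Morley orthogonality $a_{\rm pw}(v-I_{\rm M}v,w_{\rm M})=0$ together with the annihilation of the cross terms $a_{\rm pw}(w_{\rm M},(1-J)v_{\rm M})$ via properties (i)--(iii) of $J$ (piecewise-constant Hessians, matching vertex values and edge-normal averages), which is precisely the technical core of \cite{MR4376276}. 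With the product form substituted for your sum form, both your energy-norm and $L^2$ arguments close as written.
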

\subsection{Error Estimates} \label{P1 semi_errors_section1}
The stability and error estimates for the semidiscrete scheme  {are} presented in this {subsection.} The proofs herein employ the following useful result. 
\begin{lemma}[{Gronwall's} Lemma \cite{chen}]\label{P1 gronwall}
Let $g$, $h$, and $r$ be non-negative integrable functions on $[0,T]$ and let $g$ satisfy 
%\begin{equation*}
 $\displaystyle g(t) \le h(t)+ \int_0^t r(s) g(s) \ds  \text{ for all } t \in (0,T). $ Then,
%\end{equation*}
\begin{equation*}
   \displaystyle  g(t) \le h(t)+ \int_0^t h(s)r(s) e^{\int_s ^{t}{r(\tau )\;\text{d}\tau }}\ds  \quad \text{for all $t \in (0,T)$}.
   \end{equation*}
 \end{lemma}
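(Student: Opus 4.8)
The plan is to turn the integral inequality into a linear differential inequality and then integrate it with an integrating factor. First I would introduce the accumulated weighted integral
\[
G(t):=\int_0^t r(s)\,g(s)\,\ds,
\]
which satisfies $G(0)=0$, is absolutely continuous on $[0,T]$, and obeys $G'(t)=r(t)\,g(t)$ for almost every $t$. The hypothesis $g(t)\le h(t)+G(t)$ together with $r\ge 0$ then yields the almost-everywhere differential inequality $G'(t)=r(t)\,g(t)\le r(t)\,h(t)+r(t)\,G(t)$.

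Next I would multiply by the integrating factor $\mu(t):=e^{-\int_0^t r(\tau)\,\mathrm{d}\tau}$, which is positive and absolutely continuous with $\mu'(t)=-r(t)\,\mu(t)$. This gives
\[
\frac{d}{dt}\big(\mu(t)\,G(t)\big)=\mu(t)\big(G'(t)-r(t)\,G(t)\big)\le \mu(t)\,r(t)\,h(t)
\]
for almost every $t$. Integrating from $0$ to $t$ and using $\mu(0)G(0)=0$ gives $\mu(t)\,G(t)\le \int_0^t \mu(s)\,r(s)\,h(s)\,\ds$. Dividing by $\mu(t)>0$ and observing that $\mu(s)/\mu(t)=e^{\int_s^t r(\tau)\,\mathrm{d}\tau}$ produces the bound on $G(t)$; substituting back into $g(t)\le h(t)+G(t)$ reproduces exactly the claimed estimate.

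The step I would be most careful about is the regularity needed to invoke the fundamental theorem of calculus: one must ensure that $s\mapsto r(s)\,g(s)$ is (locally) integrable so that $G$ is genuinely absolutely continuous, and that multiplying the almost-everywhere inequality by $\mu\ge 0$ and integrating preserves its direction. Since the statement assumes only that $g,h,r$ are non-negative and integrable, I would either record the standing assumption that $rg$ is integrable (already implicit, as the right-hand side of the hypothesis must be finite) or, to stay entirely within the integrable category and avoid differentiation altogether, run the equivalent iterative argument: substituting the inequality into itself $n$ times yields
\[
g(t)\le h(t)+\sum_{k=1}^{n}\int_0^t h(s)\,r(s)\,\frac{\left(\int_s^t r(\tau)\,\mathrm{d}\tau\right)^{k-1}}{(k-1)!}\,\ds+R_n(t),
\]
after swapping the order of the iterated integrals, where the remainder $R_n(t)$ is a controlled $n$-fold iterated integral of $r$ against $g$ that tends to $0$ as $n\to\infty$ because such integrals decay like $(\int_0^t r)^n/n!$. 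Letting $n\to\infty$, the series sums to $\int_0^t h(s)\,r(s)\,e^{\int_s^t r(\tau)\,\mathrm{d}\tau}\,\ds$, again giving the claim. Both routes are short; the iterative one is the more robust under the stated integrability-only hypotheses.
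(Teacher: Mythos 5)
Your argument is correct. Note that the paper does not prove this lemma at all --- it is quoted from the literature (the cited reference) and used as a black box --- so there is no in-paper proof to compare against. Your integrating-factor derivation is the standard one and is sound: setting $G(t)=\int_0^t r(s)g(s)\,\mathrm{d}s$, the hypothesis forces $G$ to be finite and absolutely continuous (the integrability of $rg$ is indeed implicit in the statement, as you observe), the differentiation of $\mu G$ and the preservation of the inequality under multiplication by $\mu\ge 0$ and integration are all legitimate, and substituting the resulting bound on $G$ back into $g\le h+G$ gives exactly the claimed estimate. The iterative (Picard-type) variant you sketch as a fallback is also correct and would be the safer route if one wanted to avoid invoking absolute continuity, since the remainder is controlled by $\bigl(\int_0^t r\bigr)^{n-1}\!/(n-1)!$ times the finite quantity $\int_0^t rg$; either version suffices for the way the lemma is used in the paper.
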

 \begin{rem}
    {The constants in the stability/error bounds established in this section do not depend exponentially on the final time $T$. {This is achieved by carefully manipulating the coefficients of the last terms in the above displayed inequality in such a way that  $r(\tau)=1/T$.} Then it suffices to  
     apply Lemma~\ref{P1 gronwall} 
     to obtain 
     $e^{\int_s ^{t}{1/T\;\text{d}\tau }}\le e$.}
 \end{rem}
\begin{lemma}[Stability] %\label{P1 semi_stabilty} 
For any $t>0$, the solution  $u_{h}(\cdot, t)$ to \eqref{P1 semi_formulation} with $u_h(0) ={\mathcal R}_h u^0 $ and $u_{ht}(0)= {\mathcal R}_h v^0 $  satisfies %the stability result:
{\begin{equation*}
    \norm{u_{ht}(t)}+ \norm{u_{h}(t)}_h \lesssim 
 \norm{\mathcal{R}_h u^0}_h+\norm{\mathcal{R}_h v^0}+\norm{f}_{L^2(L^2(\Omega))} .
\end{equation*}
The constant absorbed in "$\lesssim$'' depends on $\alpha$, $\beta$ from \eqref{P1 a_h_properties}, and $T$. }
\end{lemma}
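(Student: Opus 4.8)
The plan is to use the classical energy method, testing the semidiscrete equation against the discrete velocity $u_{ht}$. Since \eqref{P1 semi_formulation} is a linear, constant-coefficient system of ordinary differential equations posed in the finite-dimensional space $V_h$, its solution $u_h(\cdot,t)$ depends smoothly on $t$, so the choice $v_h=u_{ht}(\cdot,t)\in V_h$ is admissible for every fixed $t$. Inserting it into \eqref{P1 semi_formulation} gives $(u_{htt},u_{ht})+a_h(u_h,u_{ht})=(f,u_{ht})$. The first term equals $\tfrac12\tfrac{\mathrm d}{\mathrm dt}\|u_{ht}\|^2$, and, by the symmetry of $a_h(\cdot,\cdot)$ recorded in \eqref{P1 a_h_properties}, the second equals $\tfrac12\tfrac{\mathrm d}{\mathrm dt}\,a_h(u_h,u_h)$. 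Introducing the discrete energy $E(t):=\|u_{ht}(t)\|^2+a_h(u_h(t),u_h(t))$, this produces the energy identity $\tfrac12 E'(t)=(f,u_{ht})$.

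Next I would bound the right-hand side. Cauchy--Schwarz followed by the weighted Young's inequality with $\epsilon=1$ gives $(f,u_{ht})\le\tfrac12\|f\|^2+\tfrac12\|u_{ht}\|^2\le\tfrac12\|f\|^2+\tfrac12 E(t)$, whence $E'(t)\le\|f(t)\|^2+E(t)$. Integrating from $0$ to $t$ yields
\[
E(t)\le E(0)+\int_0^t\|f(s)\|^2\ds+\int_0^t E(s)\ds,
\]
which is precisely the hypothesis of Gr\"onwall's Lemma~\ref{P1 gronwall} with $g=E$, $r\equiv 1$, and $h(t)=E(0)+\|f\|_{L^2(L^2(\Omega))}^2$. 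Applying the lemma delivers $E(t)\le C(T)\big(E(0)+\|f\|_{L^2(L^2(\Omega))}^2\big)$, with a constant that grows like $e^{T}$ and thus accounts for the stated $T$-dependence.

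It remains to convert the energy bound into the norms appearing in the statement, which is where $\alpha$ and $\beta$ enter. For the initial datum, the continuity estimate in \eqref{P1 a_h_properties} gives $a_h(\mathcal{R}_hu^0,\mathcal{R}_hu^0)\le\beta\|\mathcal{R}_hu^0\|_h^2$, and since $u_{ht}(0)=\mathcal{R}_hv^0$ we obtain $E(0)\le\|\mathcal{R}_hv^0\|^2+\beta\|\mathcal{R}_hu^0\|_h^2$. For the left-hand side, the ellipticity estimate $\alpha\|u_h\|_h^2\le a_h(u_h,u_h)$ yields $\|u_{ht}(t)\|^2+\alpha\|u_h(t)\|_h^2\le E(t)$, and combining this with the elementary inequality $(a+b)^2\le 2(a^2+b^2)$ shows $\|u_{ht}(t)\|+\|u_h(t)\|_h\lesssim E(t)^{1/2}$. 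Chaining the three estimates and taking square roots (using $\sqrt{x+y}\le\sqrt{x}+\sqrt{y}$) produces the assertion, with the hidden constant depending on $\alpha$, $\beta$, and $T$.

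The proof is essentially routine, so rather than a single hard obstacle there are two points that require care. The first is the clean derivation of the energy identity: this hinges entirely on the symmetry of $a_h(\cdot,\cdot)$, so the argument would not go through for a nonsymmetric discrete form. The second is the passage from the quadratic energy $E$ to the first-power combination $\|u_{ht}\|+\|u_h\|_h$, where one must keep the constants $\alpha,\beta$ explicit in order to justify their appearance in the statement. The admissibility of the test function $v_h=u_{ht}$ is the only genuinely structural assumption, and it is guaranteed by the smooth-in-time dependence of the finite-dimensional ODE solution.
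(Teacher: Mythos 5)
Your proposal is correct and follows essentially the same route as the paper: test with $v_h=u_{ht}$, derive the energy identity using the symmetry of $a_h(\cdot,\cdot)$, bound the source term via Cauchy--Schwarz and Young, integrate, control $a_h(u_h(0),u_h(0))$ from above by $\beta\|\mathcal{R}_hu^0\|_h^2$ and $a_h(u_h(t),u_h(t))$ from below by $\alpha\|u_h(t)\|_h^2$, and close with Gr\"onwall's Lemma~\ref{P1 gronwall}. The only cosmetic difference is that you apply Gr\"onwall to the full energy $E(t)$ (using $\|u_{ht}\|^2\le E$), whereas the paper keeps $\int_0^t\|u_{ht}\|^2\,\mathrm{d}s$ explicitly on the right-hand side before invoking the lemma; both are equivalent.
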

%\end{equation*}
%\end{lemma}
\begin{proof}[\textbf{Proof}]
The choice $v_h = u_{ht}$ in \eqref{P1 semi_formulation} followed by Cauchy--Schwarz and Young {inequalities lead} to 
\begin{align*}
   \frac{\mbox{d}}{\mbox{d}t} \norm{u_{ht}}^2+\frac{\mbox{d}}{\mbox{d}t} a_h (u_{h}, u_h) \le 2\: (f, u_{ht}) 
 \le T\norm{f}^2+\frac{1}{T}\norm{u_{ht}}^2.
\end{align*}
{Integrate from $0$ to $t$,} utilize \eqref{P1 a_h_properties} to bound $a_h(u_h(t), u_h(t))$ and $a_h(u_h(0), u_h(0))$, and obtain 
\begin{align*}
 % \min\{ 1, \alpha\} \left( \norm{u_{ht}(t)}^2+{\alpha} \norm{u_{h}(t) }_h^2 \right) \le   
 \norm{u_{ht}(t)}^2+{\alpha} \norm{u_{h}(t) }_h^2  \le \norm{{\mathcal R}_h v^0}^2+ {\beta} \norm{{\mathcal R}_h u^0}_h^2 + T\int_0^t\norm{f(s)}^2 \text{d}s+\frac{1}{T}\int_0 ^t\norm{u_{ht} (s)}^2 \ds.
\end{align*} 
 {Lemma \ref{P1 gronwall} and} elementary manipulations conclude the proof.
\end{proof}

%For any {$t \in (0,T]$}, we proceed to
\noindent {We} split the semidiscrete error as 
\begin{equation}
    u(t)-u_h(t)=(u(t)-{\cal  R}_hu(t) )+({\cal R}_hu(t)-u_h(t))=:\rho(t)+\theta(t). \label{P1 semi_split}
\end{equation}
Note that since \eqref{P1 weak_form} holds for any $v \in H^2_0(\Omega)$, in particular, it is true for $Qv_h $, with $v_h \in V_h$. The choice  $Qv_h $ as a test function in \eqref{P1 weak_form} reveals that 
\begin{equation}
    ( u_{tt},Qv_h )+a(u,Qv_h )=(f,Qv_h ) \text{ for all } v_h \in V_h. \label{P1 Qsemi_continuous}
\end{equation} 
Let us now subtract \eqref{P1 semi_formulation} from \eqref{P1 Qsemi_continuous} and employ \eqref{P1 ritz_projection} to obtain
\begin{equation*}
    (u_{tt}-u_{htt},v_h)+a_h({\cal R}_hu-u_h,v_h)=(f-u_{tt},(Q-I)v_h ) \text{ for all } v_h \in V_h.
\end{equation*}
In addition, one readily sees that the error split in \eqref{P1 semi_split} leads to
\begin{equation}  
( \theta_{tt},v_h )+a_h (\theta,v_h)=(f-u_{tt},(Q-I)v_h )-(\rho_{tt}
,v_h ) \text{ for all } v_h \in V_h.\label{P1 semi_error_equation}
\end{equation}
From now on, let us denote 
\begin{align}
L_{(f,u)} & := \norm{f}_{L^\infty {(L^2(\Omega))}}
+ \norm{f_t}_{L^2 ({L^2(\Omega))}}
+\norm{u}_{L^\infty ({H^{2+\gamma_0}(\Omega))}}+ \norm{u_{t}}_{L^\infty ({H^{2+\gamma_0}(\Omega))}} \nonumber\\
&\qquad \qquad +\norm{u_{tt}}_{L^\infty {(L^2(\Omega))}}+ \norm{u_{tt}}_{L^2 {(H^{2+\gamma_0}(\Omega))}} +\norm{u_{ttt}}_{L^2 {(L^2(\Omega))}}. \label{L}
\end{align}
\begin{thm}[Error Control]\label{P1 semi_error_estimates}
Let $u \text{ and } u_h$ solve \eqref{P1 weak_form} and \eqref{P1 semi_formulation}, respectively, and let the regularity results in Lemma \ref{P1 regularity_lemma} hold. {Then, for $t>0$, it holds that 
$$\norm{u_t(t) - u_{ht}(t)}+\norm{u(t) - u_h(t)}_h \lesssim h^{\gamma_0} L_{(f,u)}.$$
The constant in ``$\lesssim$'' depends on $\alpha$, $\beta$ from \eqref{P1 a_h_properties}, $T$, and $C_1$, $C_2$ from Lemmas~\ref{bhcompanion_lem}-\ref{P1 ritz_lemma}.}
%and the absorbed constant depends on $\alpha$ and $\beta.$
\end{thm}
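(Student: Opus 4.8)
The plan is to work with the error splitting $u-u_h=\rho+\theta$ introduced in \eqref{P1 semi_split}. The contribution $\rho=u-\mathcal R_h u$ is controlled directly by the Ritz estimate \eqref{P1 norm_ritz}; since $\mathcal R_h$ is linear and independent of $t$, it commutes with $\partial_t$, so that $\rho_t=u_t-\mathcal R_h u_t$ and $\rho_{tt}=u_{tt}-\mathcal R_h u_{tt}$ obey the same bounds with $u$ replaced by $u_t$ and $u_{tt}$. Everything then reduces to estimating $\theta=\mathcal R_h u-u_h\in V_h$ in the energy norm and $\theta_t\in V_h$ in $L^2$, which I would do by an energy argument on the error equation \eqref{P1 semi_error_equation}.

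Concretely, I would test \eqref{P1 semi_error_equation} with $v_h=\theta_t$ and use the symmetry of $a_h$ to obtain the identity
\[
\tfrac12\,\frac{\mathrm d}{\mathrm dt}\bigl(\norm{\theta_t}^2+a_h(\theta,\theta)\bigr)=(f-u_{tt},(Q-I)\theta_t)-(\rho_{tt},\theta_t).
\]
Integrating from $0$ to $t$ and noting that the initial data in \eqref{P1 semi_formulation} force $\theta(0)=\mathcal R_hu^0-u_h(0)=0$ and $\theta_t(0)=\mathcal R_hv^0-u_{ht}(0)=0$, the left-hand side becomes exactly $\tfrac12(\norm{\theta_t(t)}^2+a_h(\theta(t),\theta(t)))$, which by the ellipticity in \eqref{P1 a_h_properties} controls $\norm{\theta_t(t)}^2+\alpha\norm{\theta(t)}_h^2$.

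The crux, and what I expect to be the main obstacle, is the consistency term $\int_0^t(f-u_{tt},(Q-I)\theta_t)\,\dt$. The smoother estimate \eqref{P1 norm_Q} with $s=0$ only yields $\norm{(Q-I)\theta_t}\lesssim h^2\norm{\theta_t}_h$, and $\norm{\theta_t}_h$ is \emph{not} part of the energy, so a direct bound fails. The remedy is to integrate by parts in time: since $Q-I$ is a fixed linear operator, $(Q-I)\theta_t=\partial_t[(Q-I)\theta]$, whence
\[
\int_0^t(f-u_{tt},(Q-I)\theta_t)\,\dt=(f-u_{tt},(Q-I)\theta)\big|_{0}^{t}-\int_0^t(f_t-u_{ttt},(Q-I)\theta)\,\dt .
\]
The endpoint at $0$ vanishes because $\theta(0)=0$, and every surviving term now carries $(Q-I)\theta$, which \eqref{P1 norm_Q} bounds by $C_1h^2\norm{\theta}_h$, a quantity controlled by the energy.

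To close, I would apply Cauchy--Schwarz and weighted Young to each term: the endpoint at $t$ produces $\tfrac{\alpha}{4}\norm{\theta(t)}_h^2$, absorbed on the left, plus a harmless $\mathcal O(h^4)\norm{f-u_{tt}}_{L^\infty(L^2)}^2$; the term $(\rho_{tt},\theta_t)$ is handled via $\norm{\rho_{tt}}\lesssim h^{2\gamma_0}\norm{u_{tt}}_{H^{2+\gamma_0}(\Omega)}$ from \eqref{P1 norm_ritz}. Writing $G(t)^2:=\norm{\theta_t(t)}^2+a_h(\theta(t),\theta(t))$ and using $\norm{\theta}_h\le\alpha^{-1/2}G$, $\norm{\theta_t}\le G$, the remaining integrals take the form $\int_0^t(h^2\norm{f_t-u_{ttt}}+\norm{\rho_{tt}})\,G(s)\,\ds$. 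A quadratic Gr\"onwall argument (reducing to Lemma~\ref{P1 gronwall} via $M(t)=\max_{s\le t}G(s)$) then gives $G(t)\lesssim h^{2\gamma_0}L_{(f,u)}$, so $\norm{\theta_t(t)}+\norm{\theta(t)}_h\lesssim h^{2\gamma_0}L_{(f,u)}$; here the surviving norms $\norm{f}_{L^\infty(L^2)}$, $\norm{u_{tt}}_{L^\infty(L^2)}$, $\norm{f_t}_{L^2(L^2)}$, $\norm{u_{ttt}}_{L^2(L^2)}$, and $\norm{u_{tt}}_{L^2(H^{2+\gamma_0})}$ are exactly those assembled into $L_{(f,u)}$ and are finite by Lemma~\ref{P1 regularity_lemma}. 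Combining with the triangle inequality and the Ritz bounds $\norm{\rho}_h\lesssim h^{\gamma_0}\norm{u}_{H^{2+\gamma_0}(\Omega)}$ and $\norm{\rho_t}\lesssim h^{2\gamma_0}\norm{u_t}_{H^{2+\gamma_0}(\Omega)}$ delivers the claim, the dominant $\mathcal O(h^{\gamma_0})$ contribution coming from the energy-norm projection error $\norm{\rho}_h$.
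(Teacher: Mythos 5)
Your proposal is correct and follows essentially the same route as the paper's proof: the splitting $u-u_h=\rho+\theta$, testing \eqref{P1 semi_error_equation} with $\theta_t$, the time integration by parts to replace $(Q-I)\theta_t$ by $(Q-I)\theta$ (which is indeed the key step, since \eqref{P1 norm_Q} only controls $\|(Q-I)\theta\|$ by $h^2\|\theta\|_h$), followed by Young, Gr\"onwall, and the triangle inequality with Lemma~\ref{P1 ritz_lemma}. The only cosmetic difference is that the paper applies Young's inequality first and then the standard linear Gr\"onwall lemma rather than your quadratic variant, but both yield the same bound $\|\theta_t(t)\|+\|\theta(t)\|_h\lesssim h^{2\gamma_0}L_{(f,u)}$.
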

\begin{proof}[\textbf{Proof}]
The choice $v_h= \theta_t$ in \eqref{P1 semi_error_equation} leads to
\begin{equation}\label{eq:01}
 \frac{1}{2}  \left(\frac{\mbox{d}}{\mbox{d}t}\norm{\theta_t}^2 + \frac{\mbox{d}}{\mbox{d}t}a_h(\theta,\theta)\right) =(f-u_{tt},(Q-I)\theta_t )-(\rho_{tt}
,\theta_t ).
\end{equation}
An integration from $0$ to $t$ (with respect  {to} $t$) followed by an integration by parts for the first term on the right-hand side of \eqref{eq:01}{, $\norm{\theta_t(0)}=\norm{\theta(0)}_h=0$,} and  \eqref{P1 a_h_properties}, imply that
\begin{align}
 \frac{1}{2} (\norm{\theta_t}^2 + {\alpha} \norm{\theta}_h^2  )
  & \le  (f-u_{tt},(Q-I)\theta )
  - \int_0 ^ t \big(f_t-u_{ttt},(Q-I)\theta \big) \ds
   -\int_0^t (\rho_{tt}
,\theta_t ) \ds . \label{P1 IBP}
\end{align}
{A Cauchy--Schwarz inequality and \eqref{P1 norm_Q} (with $s=0$, $v=0$) control  the first and second terms on the right-hand side of \eqref{P1 IBP}}
\begin{align*}
     (f-u_{tt},(Q-I)\theta)- \int_0 ^ t \left(f_t-u_{ttt},(Q-I)\theta \right) \ds 
    & \le \norm{f-u_{tt}}\norm{(Q-I)\theta}+\int_0^t  \norm{f_t-u_{ttt}}\norm{(Q-I)\theta}\ds  \\
        & \le  C_1{h^2} \left( \norm{f-u_{tt}} \norm{\theta}_h+\int_0^t\norm{f_t-u_{ttt}} \norm{\theta}_h\ds  \right).
\end{align*}
{Young's} inequality (applied twice)  with $a:=C_1h^2\norm{f-u_{tt}}$ (resp. $a:=C_1h^2\norm{f_t-u_{ttt}}$), $b=\norm{\theta}_h$, and  $\epsilon=2/ \alpha$ ( resp. $\epsilon=2T/ \alpha$, with $\alpha$ from \eqref{P1 a_h_properties}) for the first (resp. second) terms of the right-hand side of the above expression followed by {Cauchy--Schwarz and Young inequalities} with  $a:=\norm{\rho_{tt}}$, $b=\norm{\theta_t}$, and $\epsilon=T$ for the third term on the right-hand side of \eqref{P1 IBP} show
\begin{align*}
 (f-u_{tt},(Q-I)\theta)  &\le
{C_1^2h^4}{\alpha^{-1}} \norm{f-u_{tt}}^2 + \frac {\alpha}{4}\norm{\theta}_h^2, \\
 \int_0 ^ t \left(f_t-u_{ttt},(Q-I)\theta \right) \ds 
    & \le {TC_1^2h^4}{\alpha^{-1}} \int_0^t \norm{f_t-u_{ttt}}^2\ds  
+\frac {\alpha}{4T} \int_0^t \norm{\theta}_h^2 \ds, \\
 \int_0^t(\rho_{tt}
,\theta_t ) \ds & \le  \frac{1}{2} \int_0^t \left(T\norm{\rho_{tt}}^2 +\frac{1}{T}\norm{\theta_t}^2 \right)\ds . 
\end{align*}
{A substitution of this in \eqref{P1 IBP},  triangle inequality (twice), and  the estimate for $\norm{\rho_{tt}}$ from \eqref{P1 norm_ritz} yields}
\begin{align*}
 \nonumber   \norm{\theta_t}^2 + \frac{\alpha}{2} \norm{\theta}_h^2  &\le
    C h^4 \left( \norm{f}^2 + \norm{u_{tt}}^2+\int_0^t \norm{f_t}^2 \ds + \int_0^t \norm{u_{ttt}}^2 \ds \right)\\
& \qquad+{TC^2_2}h^{4\gamma_0}\int_0 ^t \norm{u_{tt}}^2_{H^{2+ \gamma_0}(\Omega)} \ds +\frac{1}{T} \int_0^t \left( \frac{\alpha}{2} \norm{\theta}_h^2  +\norm{\theta_t}^2 \right) \ds.
\end{align*}
{The constant  $C$ depends on $ \alpha$, $C_1$ and is linear in $T$.}\\
{As a consequence of Lemma~\ref{P1 gronwall}, we have}
\begin{align}
   \nonumber  \norm{\theta_t}^2 + \frac{\alpha}{2} \norm{\theta}_h^2 & \lesssim
     h^4 \left( \norm{f}_{L^\infty(L^2(\Omega))}^2 + \norm{u_{tt}}^2_{L^\infty(L^2(\Omega))}+\norm{f_t}^2_{L^2(L^2(\Omega))}+\norm{u_{ttt}}^2_{L^2(L^2(\Omega))} \right)\\
     & \quad+ h^{4\gamma_0} \norm{u_{tt}}^2_{L^2(H^{2+ \gamma_0}(\Omega))}. \label{gr}
\end{align}
{A triangle inequality and Lemma~\ref{P1 ritz_lemma} conclude the proof.}
\end{proof}
\noindent
The next theorem establishes a suitable $L^2-$error bound $\norm{u(t)-u_h(t)}$ under the same regularity as in Theorem~\ref{P1 semi_error_estimates}. {The proof extends the method from \cite{baker1976error}, originally developed for conforming finite elements applied to second-order wave problems, to nonstandard finite elements for fourth-order problems, incorporating an appropriate elliptic projection for the initial data.}
\begin{thm}[Error Control in  {$L^2$}]
%\label{P1 semi_error_estimatesL2}
Let $u \text{ and } u_h$ solve \eqref{P1 weak_form} and \eqref{P1 semi_formulation}, respectively, and let the regularity results in Lemma \ref{P1 regularity_lemma} hold. {Then, for $t>0$, it holds that }
%\begin{align*}
\[\norm{u(t) - u_{h}(t)}\lesssim h^{2\gamma_0} L_{(f,u)}, \]
%\end{align*}
where $L_{(f,u)}$ is defined in \eqref{L}, and the constant in ``$\lesssim$'' depends on $\alpha$, $\beta$ from \eqref{P1 a_h_properties}, T, and $C_1$, $C_2$ from Lemmas~\ref{bhcompanion_lem}-\ref{P1 ritz_lemma}. 
\end{thm}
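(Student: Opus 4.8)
The plan is to reuse the error splitting $u-u_h = \rho+\theta$ from \eqref{P1 semi_split} together with a triangle inequality, $\|u(t)-u_h(t)\|\le\|\rho(t)\|+\|\theta(t)\|$, and to estimate the two pieces separately. The projection error $\|\rho(t)\|=\|u(t)-\mathcal{R}_h u(t)\|$ is immediately controlled by the $L^2$-part of the modified Ritz estimate \eqref{P1 norm_ritz}, yielding $\|\rho(t)\|\le C_2 h^{2\gamma_0}\|u(t)\|_{H^{2+\gamma_0}(\Omega)}\le C_2 h^{2\gamma_0}L_{(f,u)}$; this contribution is already of the claimed order and needs no further work.

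The heart of the matter is $\|\theta(t)\|$, and here I would avoid a separate duality (Aubin--Nitsche) argument and instead exploit superconvergence of $\theta$. Because $u_h(0)=\mathcal{R}_h u^0=\mathcal{R}_h u(0)$, we have $\theta(0)=0$, so the fundamental theorem of calculus gives $\theta(t)=\int_0^t\theta_t(s)\ds$ and hence $\|\theta(t)\|\le\int_0^t\|\theta_t(s)\|\ds\le T\sup_{0\le s\le t}\|\theta_t(s)\|$. Thus the estimate reduces to controlling $\|\theta_t\|$ pointwise in time.

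The decisive observation is that the inequality \eqref{gr}, established inside the proof of Theorem~\ref{P1 semi_error_estimates}, already furnishes the order $h^{2\gamma_0}$ for $\theta_t$, not merely $h^{\gamma_0}$. Its right-hand side is the sum of an $h^4$-term and an $h^{4\gamma_0}$-term, and since $\gamma_0\in(1/2,1]$ forces $4\gamma_0\le 4$, the $h^{4\gamma_0}$-term dominates for $h\le 1$, so $\|\theta_t(t)\|\lesssim h^{2\gamma_0}L_{(f,u)}$ uniformly in $t$. This superconvergence is simply not visible in the statement of Theorem~\ref{P1 semi_error_estimates}, where the $\theta$-contribution is masked by $\|\rho(t)\|_h$, whose order is only $h^{\gamma_0}$ by \eqref{P1 norm_ritz}; the $L^2$ setting removes that bottleneck. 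Inserting this bound into the time integral above gives $\|\theta(t)\|\lesssim h^{2\gamma_0}L_{(f,u)}$, and adding the $\rho$-bound completes the estimate. I anticipate no genuine difficulty: the only point requiring care is recognizing that $h^{4\gamma_0}$ is the dominant term in \eqref{gr}, so that the $L^2$-estimate follows as a near-corollary of the energy analysis already performed, which is exactly the economy the modified Ritz projection is designed to deliver.
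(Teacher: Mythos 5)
Your proposal is correct, and it takes a genuinely different route from the paper. The paper proves this theorem with a Baker-type argument: it rewrites the error equation as \eqref{l2erroreqn}, tests with the backward time integral $\widehat{\theta}(t)=\int_t^\tau\theta\,\mathrm{d}s$, integrates over $(0,\tau)$ so that $\frac12\|\theta(\tau)\|^2+a_h(\widehat\theta(0),\widehat\theta(0))$ appears on the left, estimates the resulting right-hand side (invoking \eqref{gr} along the way, in \eqref{l24}, to control $\int_0^\tau\int_t^\tau\|\theta\|_h^2$), and closes with Gr\"onwall. You instead observe that the intermediate estimate \eqref{gr} already delivers $\|\theta_t(t)\|^2+\frac{\alpha}{2}\|\theta(t)\|_h^2\lesssim (h^4+h^{4\gamma_0})L_{(f,u)}^2\lesssim h^{4\gamma_0}L_{(f,u)}^2$ (using $4\gamma_0\le 4$ and $h\lesssim 1$), so that the $O(h^{\gamma_0})$ rate in Theorem~\ref{P1 semi_error_estimates} comes entirely from $\|\rho\|_h$; combining $\theta(0)=0$ with $\|\theta(\tau)\|\le\int_0^\tau\|\theta_t\|\,\mathrm{d}t$ (or, even more directly, $\|\theta\|\le\|\theta\|_h$, which the paper itself uses in Corollary~\ref{corr}) and the $L^2$-part of \eqref{P1 norm_ritz} for $\rho$ then gives the claim. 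All the quantities appearing in \eqref{gr} and in the bound for $\|\rho(t)\|$ are contained in $L_{(f,u)}$, so the constants match the statement. Your argument is shorter and makes the superconvergence of $\theta$ explicit; the paper's $\widehat\theta$ machinery is the more robust template — it would still produce an $O(h^{2\gamma_0})$ bound in situations where $\theta$ is not superconvergent in the energy norm (e.g., a different discretization of the initial data with $\theta(0)\neq 0$) — although, as written, the paper's own proof also relies on \eqref{gr} and hence does not actually operate under weaker hypotheses here. The only points worth making explicit in your write-up are the mild assumption $h\lesssim 1$ used to absorb $h^4$ into $h^{4\gamma_0}$, and the factor $T$ from the time integral, both of which are harmlessly absorbed into the generic constant.
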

\begin{proof}[\textbf{Proof}]
First we note that equation \eqref{P1 semi_error_equation} can be expressed as
 \begin{equation}
    \frac{\mbox{d}}{\mbox{d}t}(\theta_t,v_h)-(\theta_t,v_{ht})+a_h(\theta,v_h)=(f-u_{tt},(Q-I)v_h)-\frac{\mbox{d}}{\mbox{d}t}(\rho_t,v_h)+(\rho_t,v_{ht}). \label{l2erroreqn}
 \end{equation}
{Let $0 <\tau \le T $ and define}
 \begin{equation}
    \widehat{\theta}(\cdot,t)=\widehat{\theta}(t):=\int_t^\tau \theta(\cdot,s)\ds \qquad  \text{ for } 0 \le t\le T .\label{t_hat}
    \end{equation} 
 The choice of $v_h=\widehat{\theta}(t)$ in \eqref{l2erroreqn} with the observation $\widehat{\theta}_t(t)=-\theta(t)$ from {\eqref{t_hat} implies}
 \begin{equation*}
     \frac{\mbox{d}}{\mbox{d}t}(\theta_t(t),\widehat{\theta}(t))+ \frac{1}{2}\frac{\mbox{d}}{\mbox{d}t}\bigg(\norm{\theta(t)}^2-a_h(\widehat{\theta}(t),\widehat{\theta}(t))\bigg)=(f(t)-u_{tt}(t),(Q-I)\widehat{\theta}(t))-\frac{\mbox{d}}{\mbox{d}t}(\rho_t(t),\widehat{\theta}(t))-(\rho_t(t),{\theta}(t)).
 \end{equation*}
An integration from $0$ to $\tau$ { with respect to $t$} and the observations $\theta(0)=\theta_t(0)=0$ from \eqref{P1 semi_formulation} and $\widehat{\theta}(\tau)=0$ from \eqref{t_hat}, result in the relation 
\begin{equation*}
\frac{1}{2}\bigg(\norm{\theta(\tau)}^2+a_h(\widehat{\theta}(0),\widehat{\theta}(0))\bigg)=\int_0^\tau (f-u_{tt},(Q-I)\widehat{\theta})\dt+(\rho_t(0),\widehat{\theta}(0))-\int_0^\tau(\rho_t,{\theta})\dt.
\end{equation*}
Since $a_h(\widehat{\theta}(0),\widehat{\theta}(0)) \ge 0$ from \eqref{P1 a_h_properties}, {a} Cauchy--Schwarz inequality and \eqref{P1 norm_Q} (choosing $s=0$ and $v=0$) show that
\begin{align}
    \frac{1}{2}\norm{\theta(\tau)}^2 &\le \int_0^\tau \norm{f-u_{tt}}\norm{(Q-I)\widehat{\theta}}\dt+\norm{\rho_t(0)}\norm{\widehat{\theta}(0)}+\int_0^\tau \norm{\rho_t}\norm{{\theta}}\dt\nonumber \\
    & \le  C_1h^2\int_0^\tau \norm{f-u_{tt}}\norm{\widehat{\theta}}_h\dt+\norm{\rho_t(0)}\norm{\widehat{\theta}(0)}+\int_0^\tau \norm{\rho_t}\norm{{\theta}}\dt. \label{l21}
\end{align}
{Young's inequality and the definition of $\widehat{\theta}(\cdot,t)$ from \eqref{t_hat} lead to}
\begin{align*}
    C_1h^2\int_0^\tau \norm{f-u_{tt}}\norm{\widehat{\theta}(\cdot,t)}_h \dt
    %&\le \frac{C_1}{2} \bigg( h^4 \int_0^\tau \norm{f-u_{tt}}^2+\int_0^\tau \norm{\widehat{\theta}(\cdot,t)}_h^2\bigg) \\
    &\le \frac{C_1}{2} \Big( h^4 \int_0^\tau \norm{f-u_{tt}}^2\dt+\int_0^\tau\int_t^\tau \norm{{\theta}(\cdot,s)}_h^2 \ds \dt\Big) \nonumber \\
    &\lesssim h^4 \Big(\norm{f}^2_{L^2(L^2(\Omega))}+\norm{u_{tt}}^2_{L^2(L^2(\Omega))}\Big)+\int_0^\tau\int_t^\tau \norm{{\theta}(\cdot,s)}_h^2 \ds \dt. %\label{l22}
\end{align*}
{On the other hand, \eqref{gr}  asserts that}
\begin{align}
        \int_0^\tau\int_t^\tau \norm{{\theta}(\cdot,s)}_h^2 \ds \dt &\lesssim h^4 \Big(\norm{f}_{L^\infty(L^2(\Omega))}^2+\norm{f_t}_{L^2(L^2(\Omega))}^2+\norm{u_{tt}}_{L^\infty(L^2(\Omega))}^2+\norm{u_{ttt}}_{L^2(L^2(\Omega))}^2\Big)\nonumber\\  &\qquad +h^{4\gamma_0}\norm{u_{tt}}_{L^2(H^{2+\gamma_0}(\Omega))}^2. \label{l24}
\end{align}
{Another application} of Young's inequality reveals
\begin{align*}
    \norm{\rho_t(0)}\norm{\widehat{\theta}(0)} +\int_0^\tau \norm{\rho_t}\norm{{\theta}}\dt &\le T\norm{\rho_t(0)}^2+\frac{1}{4T}\int_0^\tau \norm{{\theta}}^2 \dt + T \int_0^\tau \norm{\rho_t}^2 \dt+\frac{1}{4T}\int_0^\tau\norm{{\theta}}^2\dt \nonumber \\
    &\le T \norm{\rho_t(0)}^2+T \int_0^\tau \norm{\rho_t}^2 \dt+\frac{1}{2T}\int_0^\tau \norm{{\theta}}^2 \dt. %\label{l23}
\end{align*}
A combination of \eqref{l21}-\eqref{l24} with the estimates for $\norm{\rho_t(0)}^2$ (available from {Lemma~\ref{P1 ritz_lemma}) yield}
\begin{align*}
    \norm{{\theta(\tau)}}^2 &\le 2C' h^4 \bigg(\norm{f}_{L^\infty(L^2(\Omega))}^2+\norm{f_t}_{L^2(L^2(\Omega))}^2+\norm{u_{tt}}_{L^\infty(L^2(\Omega))}^2+\norm{u_{ttt}}_{L^2(L^2(\Omega))}^2\bigg)\\
&\qquad +2TC_2^2h^{4\gamma_0}\bigg(\norm{v^0}^2_{H^{2+\gamma_0}(\Omega)}+\norm{u_{tt}}_{L^2(H^{2+\gamma_0}(\Omega))}^2\bigg)+\frac{1}{T}\int_0^\tau \norm{{\theta}}^2 \dt.
\end{align*}
{The constant $C'$ depends on $\alpha$, $C_1$ ,$C_2$ and is linear in $T$. Lemma~\ref{P1 gronwall} and Lemma~\ref{P1 ritz_lemma} conclude the proof.}
\end{proof}
%}

%%%%%%%%%%%%%%%%%%%%%%%%%%%%%%%%%%%%%%%%%
\section{Explicit Leapfrog Scheme} \label{P1 fully_discrete_section}
This section describes an explicit, fully-discrete scheme for \eqref{P1 strong_form}. The stability analysis is carried out in Theorem~\ref{P1 explicit_stability} and the corresponding error estimates are presented in {Sub}section~\ref{Explicit_direct_section}. Two approaches work for the error analysis: Theorem~\ref{P1 explicit_Th2} gives a direct proof and an alternate version that utilizes the semidiscrete error bounds from Theorem~\ref{P1 semi_error_estimates} is discussed in Remark~\ref{sem}. Both approaches lead to quasi-optimal estimates under the same regularity assumptions on the exact solution, the CFL conditions, and quasi-uniformity of the underlying triangulation. Even if the use of either approach is common in the literature, to the best of our knowledge, this article is the first to use both approaches and combine lowest-order FE discretization and explicit/implicit time discretization for biharmonic wave equations. We will present  details for the explicit leapfrog scheme in this section, whereas the implicit Newmark scheme will be addressed in Section~\ref{P1 implicit_direct}.

For a positive integer $N$, consider the partition $ 0=t_0 < t_1<t_2< \cdots<t_N=T$ of the interval $[0,T]$
 with $t_n=nk$, and $k=\frac{T}{N}$ being the time step. Let $U^n=U(t_n)$ denote the approximation of the continuous solution $u$ at time $t_n$.  
For any function $\phi(x,t)$, the following notations are adopted:  
%\begin{subequations}
\begin{align*}
& \phi^n  := \phi(x,t_n)= \phi(t_n), \quad \phi^{n+1/2}:= \frac{1}{2}\left(\phi^{n+1}+\phi^n \right),
\quad \phi^{n,1/4} :=\frac{1}{4} \left(\phi^{n+1}+2\phi^n+  \phi^{n-1}\right),\\
& \bar{\partial}_t \phi^{n+1/2} :=\frac{\phi^{n+1}-\phi^{n}}{k} ,\quad 
{\bar{\partial}}^2_t \phi^n := \frac{\phi^{n+1}-2\phi^n+\phi^{n-1}}{k^2} ,\quad 
\delta_t \phi^n := \frac{\phi^{n+1}-\phi^{n-1}}{2k}. %\label{P1 phi_c}
\end{align*}
{These notations will be used throughout in the rest of this article. For example, by $u^n$ (resp. $u_h^n$), we mean the continuous solution $u(t)$ at $t = t_n$ (resp. the semidiscrete solution $u_h(t)$ at $t = t_n$), and so on.} As the first step towards stating the fully-discrete scheme, we define the initial approximations 
$U^0$ and $U^1$ to  $u^0= u(x,0)$ and $u^1=u(x,t_1)$, respectively. For the former we set 
$U^0 :={\cal R}_h{u^0}$, {whereas for the latter  
%Next we motivate 
%The initial approximation for $u(x,t_1)$ is motivated by 
we proceed with Taylor series expansions}
$$u_{tt}^1=u_{tt}^0+\int_0^{t_1} u_{ttt}(s)  \ds \quad \text{ and } \quad 
 \frac{u^1-u^0}{k}-v^0=\frac{1}{2}ku_{tt}^0+\frac{1}{2k}\int_0^{t_1} (t_1-s)^2u_{ttt}(s) \ds .$$
These relations together with elementary manipulations give  
 \begin{align}
u_{tt}^{1/2}=u_{tt}^0+\frac{1}{2}\int_0^{t_1} u_{ttt}(s)  \ds & ={2k^{-1}\left(\frac{u^1-u^0}{k}-v^0\right)-\frac{1}{k^2}\int_0^{t_1}(t_1-s)^2u_{ttt}(s) \ds +\frac{1}{2}\int_0^{t_1} u_{ttt}(s)  \ds}  \nonumber \\
& = 2k^{-1}(\bar{\partial}_t u^{1/2}-v^0)
-\frac{1}{k^2}\int_0^{t_1}(t_1-s)^2u_{ttt}(s) \ds +\frac{1}{2}\int_0^{t_1} u_{ttt}(s)  \ds . \label{trun}
 \end{align}
{Let us consider}  \eqref{P1 weak_form} at $t=t_0$, $t=t_1$ and add the resulting equations, to arrive at
\begin{equation}
    (u^{1/2}_{tt},v) +a(u^{1/2},v)=(f^{1/2},v) \text{ for all }v \in H_0^2(\Omega). \label{P1 utt_half}
\end{equation}
 Let $2k^{-1} (\bar{\partial}_tU^{1/2}-v^0)$ approximate $u^{1/2}_{tt}$ in \eqref{P1 utt_half} with the truncation error $-\frac{1}{k^2}\int_0^{t_1} (t_1-s)^2u_{ttt}(s) \ds +\frac{1}{2}\int_0^{t_1} u_{ttt}(s)  \ds $ as seen from \eqref{trun}. An approximation $U^1$ for $u(x,t_1)$ {is} then obtained from  
\begin{align}
 2k^{-1}(\bar{\partial}_t U^{1/2} , v_h )+a_h(U^{1/2},v_h) = (f^{1/2}+ {2k^{-1}v^0},v_h )\text{ for all }v_h \in V_h.\label{P1 explicit_ic_1} 
\end{align}
{Given $U^0:={\cal R}_h{u^0} $} and $U^1$ computed from \eqref{P1 explicit_ic_1}, for $n=1, \ldots N-1$, the explicit fully-discrete {problem seeks $U^{n+1} \in V_h$} such that (see \cite{MR4784929,MR4412337})
\begin{equation}
    (\bar{\partial}_t ^2 U^n,v_h) + a_h (U^{n},v_h)=(f^{n},v_h) \text{ for all } v_h \in V_h. \label{P1 explicit_fully_discrete}
\end{equation}
%\textcolor{blue}{The proofs of the truncation error estimates in the next two proofs are skipped as they  \cite{MR3003381} and follow directly from Taylor series expansions and the Cauchy--Schwarz inequality and are therefore skipped.}
\begin{lemma}[Truncation Errors for the Initial Approximation {(see  {equation} (4.21) in \cite{MR3003381})}] \label{R0}  
Let $2k^{-1}(\bar{\partial}_tU^{1/2}-v^0)$ (resp. $2k^{-1}(\bar{\partial}_tU^{1/2}-u_{ht}^0)$) approximate  $u^{1/2}_{tt}$ (resp. $u_{htt}^{1/2}$). {Then the initial truncation error in this approximation given by }
\begin{align*}
 { R^0 }& := 2k^{-1}(\bar{\partial}_t u^{1/2}-v^0) -u_{tt}^{1/2} = \int_{0}^{t_1}\left(k^{-2}{(t_1 -s)^2}+1/2 \right)u_{ttt}(s)\;{\rm\ds } \\ \bigg( resp. \; \widetilde{R}^0 & := 2k^{-1}(\bar{\partial}_t u_h^{1/2}-u_{ht}^0) -u_{htt}^{1/2}= \int_{0}^{t_1}\left(k^{-2}{(t_1 -s)^2}+{1}/{2}\right)u_{httt}(s)\;{\rm\ds } \bigg) 
\end{align*}
is bounded as 
$  \norm{{R}^0}^2 \le \frac{9}{4} k^2 \norm{u_{ttt}}^2_{L^\infty(L^2(\Omega))}  \; \Big(\text{resp. } \norm{ {\widetilde{R} }^0  }^2 \le \frac{9}{4}{k^2}\norm{u_{httt}}^2_{L^\infty(L^2(\Omega))} \Big). $
\end{lemma}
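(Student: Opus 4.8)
The plan is to read off the stated bounds directly from the integral representations of $R^0$ and $\widetilde{R}^0$; the representations themselves require essentially no new work. The identity
\[
R^0 = \int_0^{t_1}\left(k^{-2}(t_1-s)^2 + \tfrac{1}{2}\right)u_{ttt}(s)\,\ds
\]
is exactly the rearrangement of the two Taylor expansions already recorded in \eqref{trun}, so I would simply invoke that computation. For the semidiscrete remainder $\widetilde{R}^0$, I would repeat the same pair of Taylor expansions with integral remainder, now centred at $t_0$ and evaluated at $t_1$ for the semidiscrete solution $u_h$, with $v^0$ replaced by $u_{ht}^0 = u_{ht}(0)$. Since $u_h$ solves the linear ODE system \eqref{P1 semi_formulation} with a smooth right-hand side, it is smooth in time, so $u_{httt}$ exists and the identical manipulation produces the stated representation with $u_{ttt}$ replaced by $u_{httt}$.

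With the representation in hand, the estimate is a one-line calculation. First I would take the $L^2(\Omega)$ norm and apply Minkowski's inequality for integrals to move the norm inside:
\[
\norm{R^0} \le \int_0^{t_1}\left(k^{-2}(t_1-s)^2 + \tfrac{1}{2}\right)\norm{u_{ttt}(s)}\,\ds.
\]
I would then bound the scalar kernel uniformly on the interval of integration: since $0 \le t_1 - s \le t_1 = k$ for $s \in [0,t_1]$, we have $0 \le k^{-2}(t_1-s)^2 \le 1$, whence the kernel is at most $\tfrac{3}{2}$. Pulling this constant out, bounding $\norm{u_{ttt}(s)}$ by $\norm{u_{ttt}}_{L^\infty(L^2(\Omega))}$, and using the interval length $t_1 = k$ gives $\norm{R^0} \le \tfrac{3}{2}k\,\norm{u_{ttt}}_{L^\infty(L^2(\Omega))}$; squaring yields the claimed factor $\tfrac{9}{4}k^2$. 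The bound for $\widetilde{R}^0$ follows verbatim with $u_{ttt}$ replaced by $u_{httt}$.

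I do not foresee any real difficulty here, as the argument is a routine remainder estimate. The only two points meriting attention are (i) the uniform kernel bound $k^{-2}(t_1-s)^2 + \tfrac{1}{2} \le \tfrac{3}{2}$, which rests solely on $t_1 = k$, and (ii) the transfer of the Taylor representation to the semidiscrete setting, which needs the temporal smoothness of $u_h$. The latter is the sole conceptual ingredient and follows immediately from the ODE structure of \eqref{P1 semi_formulation}.
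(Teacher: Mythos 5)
Your proposal is correct and follows essentially the same route as the paper: the integral representation of $R^0$ is precisely the rearrangement already recorded in \eqref{trun}, and the bound then follows from the uniform kernel estimate $k^{-2}(t_1-s)^2+\tfrac12\le\tfrac32$ on $[0,t_1]$ together with $t_1=k$, which reproduces the constant $\tfrac94$ exactly (the semidiscrete case being verbatim once the temporal smoothness of $u_h$ from the ODE system \eqref{P1 semi_formulation} is noted, as you do). The only cosmetic remark is that integrating the kernel exactly gives $\tfrac{5k}{6}$ instead of $\tfrac{3k}{2}$, so the stated constant is not sharp, but your argument matches the one the lemma's constant is built on.
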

\begin{lemma}[Truncation Errors for the Explicit Scheme {(see page~7 in \cite{MR3003381})}] \label{tau}
Let $\bar{\partial}^2_t U^n $ denote the approximation of  $u^{n}_{tt}$ (resp. $u_{htt}^{n}$).  
Then the truncation error
\begin{align*}
     \tau^n &:= \bar{\partial}^2_t u^n -u_{tt}^n = \frac{1}{6 }\int_{-k}^{k }k^{-2}\left(k -|s|\right)^3 u_{tttt}(t_n+s){\ds }\\
    \bigg(\text{resp.}\quad  \displaystyle\widetilde{\tau}^n&:= \bar{\partial}^2_t u_h^n -u_{htt}^n = \frac{1}{6}\int_{-k}^{k }k^{-2}\left(k -|s|\right)^3 u_{htttt}(t_n+s){\ds } \bigg) 
    \end{align*}
is bounded as 
$\displaystyle \norm{{ {\tau}}^n }^2 \le \frac{ 1}{126}k^3\int_{t_{n-1}}^{t_{n+1}} \norm{u_{tttt}(s)}^2 {\ds} \; \Big(\text{resp. } \norm{ {\widetilde{\tau} }^n  }^2 \le  \frac{ 1}{126}k^3\int_{t_{n-1}}^{t_{n+1}} \norm{u_{htttt}(s)}^2 {\ds} \Big).$
\end{lemma}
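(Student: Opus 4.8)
The plan is to derive the exact integral representation of $\tau^n$ from a fourth-order Taylor expansion with integral remainder, and then to read off the stated $L^2(\Omega)$-bound by a Cauchy--Schwarz inequality in time combined with Fubini's theorem. The representation of $\widetilde{\tau}^n$ and its bound follow verbatim upon replacing $u$ by $u_h$, so I would only carry out the continuous case.

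First I would expand $u^{n+1}=u(t_n+k)$ and $u^{n-1}=u(t_n-k)$ about $t_n$ to third order, keeping the fourth-order integral remainder:
\begin{equation*}
u(t_n\pm k)=u^n\pm k\,u_t^n+\tfrac{k^2}{2}u_{tt}^n\pm\tfrac{k^3}{6}u_{ttt}^n+\frac{1}{6}\int_0^k (k-s)^3\,u_{tttt}(t_n\pm s)\ds,
\end{equation*}
where in the case of $t_n-k$ the substitution $s\mapsto-s$ has been used so that both remainders run over $[0,k]$. Adding the two expansions, the odd-order polynomial terms $\pm k\,u_t^n$ and $\pm\tfrac{k^3}{6}u_{ttt}^n$ cancel, the remaining terms combine into $2u^n+k^2u_{tt}^n$, and dividing by $k^2$ gives
\begin{equation*}
\bar{\partial}^2_t u^n-u_{tt}^n=\frac{1}{6k^2}\int_0^k (k-s)^3\big(u_{tttt}(t_n+s)+u_{tttt}(t_n-s)\big)\ds.
\end{equation*}
Folding these two terms into a single integral over $[-k,k]$ with the even weight $(k-|s|)^3$ reproduces exactly the claimed identity for $\tau^n$.

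For the bound I would apply the Cauchy--Schwarz inequality in $s$ to the representation of $\tau^n(x)$, separating the deterministic kernel $k^{-2}(k-|s|)^3$ from $u_{tttt}(x,t_n+s)$. The kernel contributes $\int_{-k}^{k}k^{-4}(k-|s|)^6\ds=\tfrac{2}{7}k^3$, so that pointwise in $x$,
\begin{equation*}
|\tau^n(x)|^2\le\frac{1}{36}\cdot\frac{2k^3}{7}\int_{-k}^{k}|u_{tttt}(x,t_n+s)|^2\ds=\frac{k^3}{126}\int_{-k}^{k}|u_{tttt}(x,t_n+s)|^2\ds.
\end{equation*}
Integrating over $\Omega$, interchanging the order of integration by Fubini's theorem, and finally substituting $t_n+s\mapsto s$ (so that $[-k,k]$ becomes $[t_{n-1},t_{n+1}]$) yields $\norm{\tau^n}^2\le\frac{1}{126}k^3\int_{t_{n-1}}^{t_{n+1}}\norm{u_{tttt}(s)}^2\ds$, as asserted.

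The argument is essentially bookkeeping, and I do not anticipate a genuine obstacle: the only points that deserve care are tracking the signs and limits in the two integral remainders (and the fold into $[-k,k]$) and verifying the constant $\tfrac{1}{126}=\tfrac{1}{36}\cdot\tfrac{2}{7}$. The sole structural requirement is enough time regularity for the fourth-order remainder, namely $u_{tttt}\in L^2(L^2(\Omega))$ on each subinterval, which is guaranteed by the bound on $\norm{u_{tttt}}_{L^\infty(L^2(\Omega))}$ from Lemma~\ref{P1 regularity_lemma} (and the analogous control of $u_{htttt}$ underlies the discrete estimate).
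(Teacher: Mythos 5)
Your proof is correct and complete: the Taylor expansions with integral remainder combine exactly as you describe to give the stated kernel representation, and the constant $\tfrac{1}{126}=\tfrac{1}{36}\cdot\tfrac{2}{7}$ checks out via $\int_{-k}^{k}k^{-4}(k-|s|)^{6}\,\mathrm{d}s=\tfrac{2}{7}k^{3}$ and Cauchy--Schwarz plus Fubini. The paper itself states this lemma with only a citation and gives no proof, and your argument is precisely the standard derivation behind the cited result.
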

\begin{lemma}[Discrete Inverse Inequality] \label{dinv}
Let $\cal T$ be a quasi-uniform triangulation {of$\; \overline{\Omega}$}. Any  $v_h \in V_h \subset H^2(\cal T)$ satisfies
   $ \norm{v_h}_h \le C_{\rm{inv}} h^{-2} \norm{v_h}$.
\begin{proof}[\textbf{Proof}]
Note that the inequality  
    $$\norm{v_{{h}}}_h \lesssim \sum_{m=0} ^2h^{m-2}_{\cal T}|v_h|_{H^m(\cal T)}$$ 
     holds for  for all $ v_{h} \in V_h$ (see \cite[Theorem 4.1]{MR4376276}).
This bound and the inverse estimates
$$|v_h|_{H^m(K)}\lesssim h^{-m}\norm{v_h}_{L^2(K)},\;  m=0,1,2,$$ valid for quasi-uniform meshes \cite[Lemma 4.5.3]{scot}, concludes the proof. 
\end{proof}
\end{lemma}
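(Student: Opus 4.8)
The plan is to reduce the mesh-dependent norm $\norm{v_h}_h$ to a weighted sum of piecewise Sobolev seminorms, and then invoke the classical polynomial inverse estimate element by element, using quasi-uniformity to make the constant uniform.

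First I would recall the norm bound $\norm{v_h}_h \lesssim \sum_{m=0}^2 h^{m-2}_{\cal T}|v_h|_{H^m(\cal T)}$ valid for all $v_h \in V_h$ (this is \cite[Theorem 4.1]{MR4376276}). The content of this bound is that although $\norm{v_h}_h$ in \eqref{P1 mesh_norm} contains jump terms on edges weighted by inverse powers of $h_e$, these edge contributions can be absorbed into volumetric element seminorms via discrete trace inequalities and scaling arguments, with the powers of $h_e$ converting cleanly into powers of the global mesh-size $h$. This is exactly where the quasi-uniformity hypothesis enters: it permits replacing the local quantities $h_K$ and $h_e$ by the single parameter $h$ throughout, while keeping all hidden constants dependent only on the shape-regularity and quasi-uniformity parameters.

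Next I would apply, on each triangle $K\in\mathcal T$, the standard inverse inequality $|v_h|_{H^m(K)}\lesssim h^{-m}_{K}\norm{v_h}_{L^2(K)}$ for $m=0,1,2$, which holds because $v_h|_K$ is a polynomial of bounded degree on a shape-regular simplex \cite[Lemma 4.5.3]{scot}. Substituting this into the weighted sum term by term yields, for each $m$, the bound $h^{m-2}|v_h|_{H^m(K)}\lesssim h^{m-2}h^{-m}\norm{v_h}_{L^2(K)}=h^{-2}\norm{v_h}_{L^2(K)}$, so every term contributes the same worst-case power $h^{-2}$. Summing over all $K\in\mathcal T$ and recombining the local $L^2$ pieces into the global norm $\norm{v_h}$ gives the stated estimate with $C_{\rm inv}$ depending only on the polynomial degree and the shape-regularity/quasi-uniformity constants.

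The only nontrivial step here is the first one, namely controlling the jump (edge) contributions of $\norm{v_h}_h$ by volumetric seminorms, since those involve traces of $v_h$ and of its normal derivative restricted to edges; handling them requires coupling discrete trace inequalities with inverse estimates and exploiting quasi-uniformity to align the powers of $h_e$ and $h_K$. Since this is precisely the content of the cited norm bound, I would quote that result directly rather than re-derive it, leaving only the routine element-wise inverse estimate and summation to complete the argument.
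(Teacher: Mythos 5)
Your proposal is correct and follows the paper's own argument exactly: it invokes the same norm bound $\norm{v_h}_h \lesssim \sum_{m=0}^2 h^{m-2}_{\mathcal T}|v_h|_{H^m(\mathcal T)}$ from \cite[Theorem 4.1]{MR4376276} and then the element-wise polynomial inverse estimates of \cite[Lemma 4.5.3]{scot} under quasi-uniformity. The additional commentary on how the edge/jump terms are absorbed and where quasi-uniformity enters is accurate but does not change the substance of the proof.
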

\noindent
This section and the rest of the paper uses the discrete {Gronwall} Lemma as stated below.
\begin{lemma}[Discrete {Gronwall} Lemma \cite{MR3003381}]\label{P1 d-gronwall}
 Let $\{a_n\}$, $\{b_n\}$, and $\{c_n\}$ be three non-negative sequences, with $\{c_n\}$ monotone, that satisfy 
 %\begin{equation*}
  $$\displaystyle    a_n+b_n \le c_n + \mu \sum_{m=0}^{n-1} a_m, \quad \mu >0, \ a_0+b_0 \le c_0 .$$
  Then for $n \ge 0,$ it holds that 
 %\end{equation*}
% \begin{equation*}
  $    a_n+b_n \le c_n e^{n \mu}.$
 %\end{equation*}
\end{lemma}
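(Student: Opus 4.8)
The plan is to argue by induction on $n$, establishing the claim $a_n + b_n \le c_n e^{n\mu}$ directly. The base case $n=0$ is immediate from the hypothesis $a_0 + b_0 \le c_0 = c_0 e^{0\cdot\mu}$. For the inductive step, I would assume that $a_m + b_m \le c_m e^{m\mu}$ holds for every $m \le n-1$. The first key observation is that, because $b_m \ge 0$, the induction hypothesis yields a bound on $a_m$ alone, namely $a_m \le a_m + b_m \le c_m e^{m\mu}$, which is exactly the quantity appearing in the summation on the right-hand side of the recurrence.

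Substituting this into the defining inequality gives $a_n + b_n \le c_n + \mu \sum_{m=0}^{n-1} c_m e^{m\mu}$. At this point I would invoke the monotonicity of $\{c_n\}$ (understood here as non-decreasing), so that $c_m \le c_n$ for all $m \le n$, in order to factor $c_n$ out of the sum and obtain $a_n + b_n \le c_n\bigl(1 + \mu \sum_{m=0}^{n-1} e^{m\mu}\bigr)$. The finite geometric series evaluates to $\sum_{m=0}^{n-1} e^{m\mu} = (e^{n\mu}-1)/(e^\mu-1)$.

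It then remains to verify the scalar inequality $1 + \mu(e^{n\mu}-1)/(e^\mu-1) \le e^{n\mu}$. This follows from the elementary bound $e^\mu - 1 \ge \mu$ for $\mu > 0$ (a consequence of the power series of $e^\mu$), which gives $\mu/(e^\mu-1) \le 1$ and hence $1 + \tfrac{\mu}{e^\mu-1}(e^{n\mu}-1) \le 1 + (e^{n\mu}-1) = e^{n\mu}$. Combining these inequalities closes the induction.

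There is no genuine obstacle here, as the argument is entirely elementary. The only two points requiring a little care are (i) the passage from the inductive bound on $a_m + b_m$ to the bound on $a_m$, which relies crucially on $b_m \ge 0$, and (ii) the direction of the monotonicity assumption, since the stated conclusion forces $\{c_n\}$ to be non-decreasing so that $c_m \le c_n$ may be used. An equivalent route, which some may find cleaner, is to set $S_n := \sum_{m=0}^{n-1} a_m$, derive the linear recurrence $S_{n+1} \le (1+\mu)S_n + c_n$ with $S_0 = 0$, solve it to obtain $S_n \le c_n\bigl((1+\mu)^n - 1\bigr)/\mu$ under the same monotonicity, and conclude via $a_n + b_n \le c_n + \mu S_n \le c_n(1+\mu)^n \le c_n e^{n\mu}$ using $1+\mu \le e^\mu$.
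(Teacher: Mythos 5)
The paper does not prove this lemma at all---it is quoted verbatim from the cited reference \cite{MR3003381}---so there is no in-paper argument to compare against. Your proof is correct and complete: the strong induction, the drop from $a_m+b_m$ to $a_m$ via $b_m\ge 0$, the use of monotonicity (necessarily non-decreasing, as you rightly observe, since otherwise $c_m\le c_n$ fails and the stated bound is false) to pull $c_n$ out of the sum, the geometric series identity, and the elementary inequality $e^{\mu}-1\ge\mu$ all fit together without gaps; the alternative route via $S_n$ and $1+\mu\le e^{\mu}$ is equally valid and is essentially the standard textbook proof of this discrete Gr\"onwall estimate.
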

\begin{rem}
  The constant in the stability/error bounds established in this and the next section does not depend exponentially on the final time \( T \). This is achieved by manipulating the coefficients of the {terms on the right-hand side such that, when the discrete Gronwall inequality is applied, \( c_n \) depends linearly on \( T \), and \( \mu = \frac{k}{T} \). Hence after application of discrete Gronwall,  \( e^{\mu} \leq e \) does not depend on \( T \), which ensures that the constant in the inequality depends linearly on \( T \).}
\end{rem}
%%%%%%%%%%%%%%%%%%%%%%%%%%%%%%%%
\subsection{Stability Analysis}
\begin{thm}[Stability]\label{P1 explicit_stability}
Under the CFL condition $k \le {\beta^{-1/2}C^{-1}_{\rm{inv}}} h^2$ { and the assumption that the {triangulation ${\mathcal T}$} is quasi-uniform}, the scheme \eqref{P1 explicit_ic_1}-\eqref{P1 explicit_fully_discrete} is stable. Moreover, for $1 \le m \le N-1$, the following bound holds:
\begin{align*}
\norm{\bar{\partial}_t U^{m+1/2}}+\norm{U^{m+1/2}}_h \lesssim  \norm{\bar{\partial}_t U^{1/2}}+\norm{U^{1/2}}_h+ \norm{f}_{L^\infty(L^2(\Omega))}.
\end{align*}
{The constant absorbed in "$\lesssim$'' above depends on $\alpha,\,\beta\;$ from \eqref{P1 a_h_properties}, and $T$.}
\end{thm}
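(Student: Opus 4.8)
The plan is to prove stability via the standard energy method for explicit central-difference schemes, where the central object is a discrete energy that must be shown to be nonnegative precisely under the CFL condition. First I would test the scheme \eqref{P1 explicit_fully_discrete} against the symmetric difference quotient $v_h = \delta_t U^n = (U^{n+1}-U^{n-1})/(2k)$, which is the discrete analogue of the choice $v_h = u_{ht}$ used in the semidiscrete stability proof. The term $(\bar\partial_t^2 U^n, \delta_t U^n)$ telescopes into the forward difference of $\tfrac12\norm{\bar\partial_t U^{n+1/2}}^2$, since $(\bar\partial_t^2 U^n, \delta_t U^n) = \tfrac{1}{2k}\bigl(\norm{\bar\partial_t U^{n+1/2}}^2 - \norm{\bar\partial_t U^{n-1/2}}^2\bigr)$. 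Similarly, using symmetry and bilinearity of $a_h(\cdot,\cdot)$, the stiffness term $a_h(U^n, \delta_t U^n)$ rearranges into the forward difference of $\tfrac{1}{2}a_h(U^{n+1/2},U^{n-1/2})$ (or an equivalent product of consecutive half-step quantities). Summing over $n$ from $1$ to $m$ then collapses the left-hand side into boundary terms at the endpoints.

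The key step is to identify the resulting discrete energy
\[
\mathcal{E}^{m} := \tfrac12\norm{\bar\partial_t U^{m+1/2}}^2 + \tfrac12 a_h(U^{m+1/2},U^{m-1/2}),
\]
and to show it is bounded below by a positive multiple of $\norm{\bar\partial_t U^{m+1/2}}^2 + \norm{U^{m+1/2}}_h^2$. The cross-term $a_h(U^{m+1/2},U^{m-1/2})$ is not manifestly nonnegative because it couples different time levels; writing $U^{m-1/2} = U^{m+1/2} - k\,\bar\partial_t U^{m+1/2}$ and using ellipticity \eqref{P1 a_h_properties} together with continuity gives a term $a_h(U^{m+1/2},U^{m+1/2}) \ge \alpha\norm{U^{m+1/2}}_h^2$ minus a correction of the form $k\,a_h(U^{m+1/2},\bar\partial_t U^{m+1/2})$. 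The correction is where the CFL condition enters: I would bound it using continuity of $a_h$, the discrete inverse inequality of Lemma~\ref{dinv} (which converts $\norm{\bar\partial_t U^{m+1/2}}_h \le C_{\rm inv} h^{-2}\norm{\bar\partial_t U^{m+1/2}}$), and Young's inequality. The factor $k\,C_{\rm inv} h^{-2}$ produced this way is controlled by the hypothesis $k \le \beta^{-1/2}C_{\rm inv}^{-1}h^2$, ensuring the negative correction cannot overwhelm the $\alpha\norm{U^{m+1/2}}_h^2$ term; this is exactly the point at which both the CFL condition and quasi-uniformity (required for Lemma~\ref{dinv}) are indispensable.

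I expect this positivity/coercivity estimate for the discrete energy to be the main obstacle, since it is the only place where the interplay between the inverse inequality, the time step, and the bilinear form must be balanced sharply; the rest is bookkeeping. Once coercivity of $\mathcal{E}^m$ is established, I would handle the forcing contribution $\sum_n k\,(f^n, \delta_t U^n)$ by Cauchy--Schwarz and Young's inequality, absorbing the velocity factors $\norm{\bar\partial_t U^{n\pm1/2}}$ into the energy and leaving a $\sum_n k\,\norm{f^n}^2 \lesssim \norm{f}_{L^\infty(L^2(\Omega))}^2$ contribution. This reduces the summed inequality to the hypotheses of the discrete Grönwall Lemma~\ref{P1 d-gronwall}, with $a_n = \mathcal{E}^n$, the forcing and initial energy playing the role of the monotone sequence $c_n$, and the absorbed velocity terms supplying the $\mu\sum_m a_m$ term. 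Applying Lemma~\ref{P1 d-gronwall} yields $\mathcal{E}^m \lesssim \mathcal{E}^0 + \norm{f}_{L^\infty(L^2(\Omega))}^2$ with the exponential constant $e^{m\mu} \le e^{T\mu}$ absorbed into the $T$-dependent constant, and taking square roots together with the coercivity lower bound gives the claimed estimate in terms of the initial data $\norm{\bar\partial_t U^{1/2}}$ and $\norm{U^{1/2}}_h$.
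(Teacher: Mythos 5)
Your overall architecture coincides with the paper's: test \eqref{P1 explicit_fully_discrete} with (a multiple of) $\delta_t U^n$, telescope the kinetic and stiffness terms, prove coercivity of the resulting discrete energy using Lemma~\ref{dinv} and the CFL condition, absorb the forcing by Cauchy--Schwarz/Young, and finish with Lemma~\ref{P1 d-gronwall}. However, the key step --- the lower bound for the telescoped stiffness term --- contains two concrete errors. First, an indexing slip: summing $2k\,a_h(U^n,\delta_t U^n)=a_h(U^n,U^{n+1})-a_h(U^n,U^{n-1})$ from $n=1$ to $m$ leaves the boundary term $a_h(U^{m+1},U^m)$, not $a_h(U^{m+1/2},U^{m-1/2})$ (the latter involves $U^{m-1}$, which cannot survive the telescoping); relatedly, $U^{m+1/2}-U^{m-1/2}=k\,\delta_t U^m$, not $k\,\bar{\partial}_t U^{m+1/2}$, so the substitution you propose for the cross term is not an identity.

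Second, and more seriously, your coercivity argument does not close under the CFL constant stated in the theorem. Splitting $a_h(U^{m+1},U^m)=a_h(U^{m+1},U^{m+1})-k\,a_h(U^{m+1},\bar{\partial}_t U^{m+1/2})$ and bounding the correction by continuity, Lemma~\ref{dinv}, the CFL bound $kC_{\rm inv}h^{-2}\le\beta^{-1/2}$, and Young's inequality gives $k\,|a_h(U^{m+1},\bar{\partial}_t U^{m+1/2})|\le \beta^{1/2}\norm{U^{m+1}}_h\norm{\bar{\partial}_t U^{m+1/2}}\le \tfrac{\alpha}{2}\norm{U^{m+1}}_h^2+\tfrac{\beta}{2\alpha}\norm{\bar{\partial}_t U^{m+1/2}}^2$; since $\beta\ge\alpha$, the coefficient $\tfrac{\beta}{2\alpha}\ge\tfrac12$ consumes the entire available kinetic term, so positivity of the energy fails unless $\beta<2\alpha$ or the time step is shrunk by an extra factor of order $\sqrt{\alpha/\beta}$. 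The paper sidesteps this loss with an \emph{exact} polarization identity: writing $U^n=\tfrac12(U^{n+1/2}+U^{n-1/2})-\tfrac{k}{4}(\bar{\partial}_t U^{n+1/2}-\bar{\partial}_t U^{n-1/2})$ and $2k\delta_t U^n=k(\bar{\partial}_t U^{n+1/2}+\bar{\partial}_t U^{n-1/2})$ yields $a_h(U^{m+1},U^m)=a_h(U^{m+1/2},U^{m+1/2})-\tfrac{k^2}{4}a_h(\bar{\partial}_t U^{m+1/2},\bar{\partial}_t U^{m+1/2})$, so the only negative contribution is the pure square $\tfrac{\beta k^2}{4}\norm{\bar{\partial}_t U^{m+1/2}}_h^2\le\tfrac14\norm{\bar{\partial}_t U^{m+1/2}}^2$ under the stated CFL --- no Young's inequality and no constant loss. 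With that replacement your argument matches the paper's; the remaining steps (forcing term with $\epsilon=2T$, discrete Gr\"onwall) are as in the paper.
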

\begin{proof}[\textbf{Proof}]
Choose $v_h= 2 k \delta_t U^n$ as a test function in \eqref{P1 explicit_fully_discrete} to obtain
\begin{equation}
    (\bar{\partial}_t ^2 U^n,2 k \delta_t U^n) + a_h (U^{n},2 k \delta_t U^n)=(f^{n},2 k \delta_t U^n) . \label{4.1 all}
    \end{equation}
 {The identities $\bar{\partial}_t ^2 U^n=k^{-1}(\bar{\partial}_tU^{n+1/2}-\bar{\partial}_tU^{n-1/2})$ and $2 k \delta_t U^n=k(\bar{\partial}_tU^{n+1/2}+\bar{\partial}_tU^{n-1/2})$ lead to}  
\begin{equation}
 (\bar{\partial}_t ^2 U^n,2 k \delta_t U^n)= (\bar{\partial}_tU^{n+1/2}-\bar{\partial}_tU^{n-1/2},\bar{\partial}_tU^{n+1/2}+\bar{\partial}_tU^{n-1/2})=\norm{\bar{\partial}_t U^{n+1/2}}^2-\norm{\bar{\partial}_t U^{n-1/2}}^2.  \label{stabilty_ist}
\end{equation}
{Elementary manipulations show} 
\begin{align*}U^n &= \frac{1}{2}\left(U^{n+1/2}+U^{n-1/2}\right)-\frac{1}{4}k(\bar{\partial}_tU^{n+1/2}-\bar{\partial}_tU^{n-1/2}), \\
2 k \delta_t U^n&=2(U^{n+1/2}-U^{n-1/2})=k(\bar{\partial}_tU^{n+1/2}+\bar{\partial}_tU^{n-1/2}).\end{align*} 
{This allows to derive the following relations}
\begin{subequations}
\begin{align}
    a_h (U^{n},2 k \delta_t U^n)&=a_h(U^{n+1/2}+U^{n-1/2},U^{n+1/2}-U^{n-1/2})
    \nonumber\\
    &\quad -\frac{1}{4}k^2a_h(\bar{\partial}_tU^{n+1/2}-\bar{\partial}_t U^{n-1/2},\bar{\partial}_tU^{n+1/2}+\bar{\partial}_t U^{n-1/2})\nonumber\\
    &=a_h(U^{n+1/2},U^{n+1/2})-a_h(U^{n-1/2},U^{n-1/2}) \nonumber \\ 
    & \quad - \frac{1}{4}k^2  a_h(\bar{\partial}_t U^{n+1/2},\bar{\partial}_tU^{n+1/2})
    +\frac{1}{4}k^2 a_h(\bar{\partial}_t U^{n-1/2},\bar{\partial}_tU^{n-1/2}), \label{stability_second} \\
    (f^n,2 k \delta_t U^n)& =k(f^n,\bar{\partial}_tU^{n+1/2}+\bar{\partial}_tU^{n-1/2}). \label{stability_third}
\end{align}
\end{subequations}
{A straightforward} combination of \eqref{stabilty_ist}, \eqref{stability_second}-\eqref{stability_third} 
in \eqref{4.1 all} and summation from $n=1$ to {$n=m$ with $1 \le m \le N-1$ result} in 
\begin{align*}
    \norm{\bar{\partial}_t U^{m+1/2}}^2& +a_h({U^{m+1/2}},{U^{m+1/2}})- \frac{1}{4}k^2    a_h({\bar{\partial}_tU^{m+1/2}},\bar{\partial}_tU^{m+1/2})
    +\frac{1}{4}k^2 a_h(\bar{\partial}_tU^{1/2},\bar{\partial}_tU^{1/2}) \\
    & \quad =  \norm{\bar{\partial}_t U^{1/2}}^2+a_h(U^{1/2},U^{1/2})+ k\sum_{n=1}^{m}  ( f^n,\bar{\partial}_tU^{n+1/2} +\bar{\partial}_tU^{n-1/2}).
\end{align*}
{Observe that  $ \frac{1}{4}k^2 a_h \left(\bar{\partial}_tU^{1/2},\bar{\partial}_tU^{1/2}\right)$ on the left-hand side is non-negative and so \eqref{P1 a_h_properties} leads to}
\begin{align}
 \norm{\bar{\partial}_t U^{m+1/2}}^2+\alpha \norm{{U^{m+1/2}}}_h^2&- \frac{\beta}{4} k^2   \norm{\bar{\partial}_tU^{m+1/2}}^2_h \nonumber\\
    &\le \norm{\bar{\partial}_t U^{1/2}}^2+\beta \norm{U^{1/2}}_h^2+ k\sum_{n=1}^{m}  ( f^n,\bar{\partial}_tU^{n+1/2} +\bar{\partial}_tU^{n-1/2}).\label{explcit stabilty eqn}   
\end{align}
Lemma~\ref{dinv} shows that $\norm{\bar{\partial}_tU^{m+1/2}}_h \le 
 {C_{\text{inv}}h^{-2}} \norm{\bar{\partial}_tU^{m+1/2}} 
 $. This estimate  and the CFL condition $k \le {C^{-1}_{\rm inv}} \beta^{-1/2}h^2$ {control  the left-hand side of \eqref{explcit stabilty eqn} as}
\begin{align}
  \norm{\bar{\partial}_t U^{m+1/2}}^2+\alpha \norm{{U^{m+1/2}}}_h^2&-\frac{\beta}{4} k^2 \norm{\bar{\partial}_tU^{m+1/2}}^2_h 
  \ge \frac{3}{4} \norm{\bar{\partial}_t U^{m+1/2}}^2+\alpha\norm{U^{m+1/2}}^2_h. \label{stab ist}
\end{align}
{A consequence of  Cauchy--Schwarz and Young inequalities} (with $a= \norm{f^n}$, $b=\norm{\bar{\partial}_tU^{n+1/2} +\bar{\partial}_tU^{n-1/2}}$, and $\epsilon=2T$) {reads}
%and the inequality $\norm{g+h}^2 \le 2(\norm{g}^2+\norm{h}^2)$ yields
\begin{align*}
 ( f^n,\bar{\partial}_tU^{n+1/2} +\bar{\partial}_tU^{n-1/2}) & \le  \;\norm{f^n} \norm{\bar{\partial}_tU^{n+1/2} +\bar{\partial}_tU^{n-1/2}} 
 \le \;{T} \norm{f^n}^2 +\frac{1}{4T}\norm{\bar{\partial}_tU^{n+1/2} +\bar{\partial}_tU^{n-1/2}}^2 \nonumber\\
 & \le\; {T}\norm{f^n}^2+\frac{1}{2T} \left( \norm{\bar{\partial}_tU^{n+1/2}}^2+\norm{\bar{\partial}_tU^{n-1/2}}^2 \right). %\label{f in stablity}
\end{align*}
On the other hand, we note that 
\begin{align*}
k \sum_{n=1}^{m}\norm{f^n}^2 & \le m k  \norm{f}^2_{L^\infty(L^2(\Omega))}  \le T \norm{f}^2_{L^\infty(L^2(\Omega))}, \\
 \sum_{n=1}^{m}\left(\norm{\bar{\partial}_tU^{n+1/2}}^2+\norm{\bar{\partial}_tU^{n-1/2}}^2\right)& =\norm{\bar{\partial}_tU^{1/2}}^2+\norm{\bar{\partial}_tU^{m+1/2}}^2+2\sum_{n=1}^{m-1}\norm{\bar{\partial}_tU^{n+1/2}}^2.\end{align*}
A combination of {the above} relations with the bound $\frac{k}{2T} \le \frac{1}{2}$ ({used twice}) in \eqref{explcit stabilty eqn} {provides}
\begin{align*}
   \norm{\bar{\partial}_t U^{m+1/2}}^2+\norm{U^{m+1/2}}^2_h \lesssim \norm{\bar{\partial}_t U^{1/2}}^2+ \norm{U^{1/2}}^2_h+ \norm{f}^2_{L^\infty(L^2(\Omega))}+ k/T \sum_{n=1}^{m-1}\norm{\bar{\partial}_t U^{m+1/2}}^2.
\end{align*}
A discrete {Gronwall} inequality from Lemma \ref{P1 d-gronwall} concludes the proof.
\end{proof}

 %%%%%%%%%%%%%%%%%%%%%%%%%%%%%%%%%%%%%%%%%%%%%%%
\subsection{Error Estimates for the Explicit Scheme}\label{Explicit_direct_section}
{This subsection discusses a direct approach for error estimates, addressing the derivation from the continuous weak formulation, construction of the explicit scheme, and the definition of the Ritz projection.  Remark~\ref{sem}  discusses an alternative approach of proving error bounds using the semidiscrete scheme. The proof is provided in {Sub}section~\ref{p1 explicit_fully_semi_section}.}

\medskip
\noindent First, let us split the error as
\begin{equation*}
    u(t_n)-U^n=\left(u(t_n)-{\cal R}_hu(t_n)\right)+\left({\cal R}_h u(t_n)-U^n\right):= \rho^n + \zeta^n.%\label{P1 full_split}
\end{equation*}
The estimates for $\rho^n$ are known from Lemma~\ref{P1 ritz_lemma}, thus it suffices to obtain the error bounds for $ \zeta^n$.

\medskip Since \eqref{P1 weak_form} is true for all $v \in H_0^2(\Omega)$, we can choose in particular $Qv_h \in H_0^2(\Omega)$ as test function, to obtain
\begin{equation}
(u_{tt}(t),Qv_h )+a(u(t),Qv_h)=(f(t),Q v_h) \quad \text{ for all } v_h \in V_h \text{ and } t > 0. \label{P1 explicit_Qun}
\end{equation}
Let us now subtract \eqref{P1 explicit_fully_discrete} and \eqref{P1 explicit_Qun} at $t=t_n$, and this yields the following error equation:
\begin{equation}
( \bar{\partial}_t ^2\zeta^n,v_h ) +a_h ( \zeta^{n},v_h  ) =  ( f^{n}- u_{tt}^n, (Q-I)v_h )+({\tau}^n-{\bar{\partial}_t}^2 \rho^n, v_h ) \quad \text{ for all } v_h \in V_h,
\label{P1 explicit_full_error_eqn}
\end{equation}
where the truncation error $\displaystyle{ { {\tau}}^n }:= \bar{\partial}^2_t u^n -u_{tt}^n$ is defined as in Lemma~\ref{tau}.   

The next lemma provides a bound on the initial error. This result plays a crucial role in the proofs of error estimates in this section and the next. {It should be noted that the proof of Lemma~\ref{P1 lemma_on_norm_initial} does not {require $\mathcal{T}$} to be quasi-uniform; it is true for  
shape-regular triangulations.}  
\begin{lemma}[Initial Error Bounds] \label{P1 lemma_on_norm_initial}
Let the regularity results in Lemma \ref{P1 regularity_lemma} hold true. Then, 
the initial error 
$$\zeta^{1/2}:=(\zeta^{0}+\zeta^{1})/2$$
satisfies
\begin{align*}
    \norm{\bar{\partial}_t \zeta^{1/2}}^2+\norm{ \zeta^{1/2}}_h^2
    \lesssim h^{4\gamma_0}\left(\norm{f}^2_ {L^ {\infty}(L^2(\Omega)) }
    +\norm{u_{tt}}^2_ {L^ {\infty}(L^2(\Omega)) } +\norm{u_{t}}^2_{L^\infty(H^{2+\gamma_0}(\Omega))}\right) +{k^4}\norm{u_{ttt}}^2_{L^ {\infty}(L^2(\Omega)) },
\end{align*}
where the constant absorbed in ``$\lesssim$'' depends on $\alpha$ from \eqref{P1 a_h_properties} and $C_1$, $C_2$ from Lemmas~\ref{bhcompanion_lem}-\ref{P1 ritz_lemma}.
\end{lemma}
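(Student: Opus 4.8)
The plan is to set up an error equation for the first step and run an energy argument; everything hinges on the initialization $U^0 = \mathcal{R}_h u^0$, which forces $\zeta^0 = \mathcal{R}_h u(t_0) - U^0 = 0$. Because $\zeta^0 = 0$, the defining notations give $\bar\partial_t\zeta^{1/2} = k^{-1}\zeta^1$ and $\zeta^{1/2} = \tfrac12\zeta^1$, hence the proportionality $\zeta^{1/2} = \tfrac{k}{2}\bar\partial_t\zeta^{1/2}$ that will later supply coercivity. To obtain the error equation I would take the half-step continuous identity \eqref{P1 utt_half}, test it against $Qv_h \in H^2_0(\Omega)$, and use \eqref{P1 ritz_projection} to rewrite $a(u^{1/2}, Qv_h) = a_h(\mathcal{R}_h u^{1/2}, v_h)$. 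Writing $u_{tt}^{1/2} = 2k^{-1}(\bar\partial_t u^{1/2} - v^0) - R^0$ through the truncation term $R^0$ of Lemma~\ref{R0}, subtracting the discrete initialization \eqref{P1 explicit_ic_1}, splitting $Q = (Q - I) + I$, and inserting $u^n - U^n = \rho^n + \zeta^n$, the contribution $2k^{-1}(\bar\partial_t\zeta^{1/2}, v_h)$ separates off to the left and leads to
\begin{equation*}
2k^{-1}(\bar\partial_t\zeta^{1/2}, v_h) + a_h(\zeta^{1/2}, v_h) = (f^{1/2} - u_{tt}^{1/2}, (Q - I)v_h) + (R^0, v_h) - 2k^{-1}(\bar\partial_t\rho^{1/2}, v_h)
\end{equation*}
for all $v_h \in V_h$, where I have used the pleasant cancellation $(R^0, Qv_h) - (R^0, (Q-I)v_h) = (R^0, v_h)$ coming from $2k^{-1}(\bar\partial_t u^{1/2} - v^0) = u_{tt}^{1/2} + R^0$, which strips the smoother off the leading truncation term.

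Next I would test with $v_h = \bar\partial_t\zeta^{1/2}$. The first left-hand term is $2k^{-1}\norm{\bar\partial_t\zeta^{1/2}}^2$, while the proportionality $\zeta^{1/2} = \tfrac{k}{2}\bar\partial_t\zeta^{1/2}$ turns the bilinear term into $\tfrac{2}{k}a_h(\zeta^{1/2}, \zeta^{1/2}) \ge \tfrac{2\alpha}{k}\norm{\zeta^{1/2}}_h^2$ by \eqref{P1 a_h_properties}. Multiplying through by $k/2$ therefore places $\norm{\bar\partial_t\zeta^{1/2}}^2 + a_h(\zeta^{1/2}, \zeta^{1/2})$ on the left, and it remains to bound the right-hand side by terms of the required order plus absorbable multiples of $\norm{\bar\partial_t\zeta^{1/2}}^2$ and $\norm{\zeta^{1/2}}_h^2$.

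For the right-hand side I would use Cauchy--Schwarz and Young's inequality term by term. In the first term, \eqref{P1 norm_Q} with $s = 0$ and $v = 0$ gives $\norm{(Q - I)\bar\partial_t\zeta^{1/2}} \le C_1 h^2\norm{\bar\partial_t\zeta^{1/2}}_h = 2C_1 h^2 k^{-1}\norm{\zeta^{1/2}}_h$, which after multiplication by $k/2$ produces $h^4\norm{f^{1/2} - u_{tt}^{1/2}}^2$ and a fraction of $\norm{\zeta^{1/2}}_h^2$. In the second, Lemma~\ref{R0} bounds $\norm{R^0}^2 \le \tfrac94 k^2\norm{u_{ttt}}^2_{L^\infty(L^2(\Omega))}$, yielding the $k^4$ term and a fraction of $\norm{\bar\partial_t\zeta^{1/2}}^2$. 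For the last term I would write $\bar\partial_t\rho^{1/2} = k^{-1}\int_0^{t_1}\rho_t(s)\ds$, which is legitimate since $\mathcal{R}_h$ is linear and time-independent so that $\rho_t = u_t - \mathcal{R}_h u_t$; then $\norm{\bar\partial_t\rho^{1/2}} \le \norm{\rho_t}_{L^\infty(L^2(\Omega))} \le C_2 h^{2\gamma_0}\norm{u_t}_{L^\infty(H^{2+\gamma_0}(\Omega))}$ by Lemma~\ref{P1 ritz_lemma} applied to $u_t$, contributing the $h^{4\gamma_0}$ term and another fraction of $\norm{\bar\partial_t\zeta^{1/2}}^2$.

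Collecting these estimates, absorbing the fractions of $\norm{\bar\partial_t\zeta^{1/2}}^2$ and $\norm{\zeta^{1/2}}_h^2$ into the left, applying coercivity \eqref{P1 a_h_properties}, and using $h^4 \le h^{4\gamma_0}$ together with $\norm{f^{1/2}} \le \norm{f}_{L^\infty(L^2(\Omega))}$ and $\norm{u_{tt}^{1/2}} \le \norm{u_{tt}}_{L^\infty(L^2(\Omega))}$ delivers the claimed bound. I expect the coercivity mechanism of the second paragraph to be the only genuinely nonroutine ingredient: at a generic time step one controls $a_h(\zeta^n, \delta_t\zeta^n)$ by telescoping with the centered difference, whereas at the initial step it is exactly the relation $\zeta^{1/2} = \tfrac{k}{2}\bar\partial_t\zeta^{1/2}$ (itself a consequence of $\zeta^0 = 0$) that makes $a_h(\zeta^{1/2}, \bar\partial_t\zeta^{1/2})$ nonnegative and lets the single test function $\bar\partial_t\zeta^{1/2}$ control both norms simultaneously. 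The rest is careful bookkeeping of constants; in particular no inverse estimate is used, consistent with the remark that quasi-uniformity of $\mathcal{T}_h$ is not needed for this lemma.
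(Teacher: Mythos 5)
Your argument is correct and follows essentially the same route as the paper: the same error equation (obtained from \eqref{P1 explicit_ic_1}, \eqref{P1 ritz_projection}, \eqref{P1 weak_form} and the truncation term $R^0$), the same exploitation of $\zeta^0=0$ to get $\zeta^{1/2}=\tfrac{k}{2}\bar{\partial}_t\zeta^{1/2}$, and the same term-by-term Cauchy--Schwarz/Young estimates via \eqref{P1 norm_Q}, Lemma~\ref{R0}, and Lemma~\ref{P1 ritz_lemma}. Testing with $\bar{\partial}_t\zeta^{1/2}$ rather than $\zeta^{1/2}$ is only a rescaling by $2/k$, so the two proofs coincide up to bookkeeping.
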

\begin{proof}[\textbf{Proof}]
For any $v_h \in V_h$, the properties \eqref{P1 explicit_ic_1} and \eqref{P1 ritz_projection} show that 
\begin{align*}
 2k^{-1} (\bar{\partial}_t \zeta^{1/2},v_h )+ a_h( \zeta^{1/2},v_h)&=2k^{-1}(\bar{\partial}_t R_h u^{1/2} - \bar{\partial}_t U^{1/2},v_h) +a_h( R_hu^{1/2}-U^{1/2},v_h) \nonumber\\
 &= 2k^{-1}(\bar{\partial}_t R_h u^{1/2},v_h)+a(u^{1/2},Qv_h)-(f^{1/2} +2k^{-1}v^0,v_h ) .\nonumber
\end{align*}
The definition of the continuous formulation in \eqref{P1 weak_form} and  $R^0=2k^{-1}(\bar{\partial}_t u^{1/2}-v^0) -u_{tt}^{1/2}$ from Lemma~\ref{R0}  {applied to the right-hand side of the above expression implies that} 
 \begin{equation*}
  2k^{-1} (\bar{\partial}_t \zeta^{1/2},v_h )+ a_h( \zeta^{1/2},v_h)=
 ( f^{1/2}- u_{tt}^{1/2},(Q-1)v_h)-2k^{-1}(\bar{\partial}_t \rho^{1/2},v_h )+(R^0,v_h). 
  \end{equation*}
Since $U^0 ={\cal R}_h{u^0}$, we have that ${\zeta^0}=0$, and so $ 2k^{-1}\zeta^{1/2}=k^{-1}{\zeta^1}={\bar{\partial}_t}\zeta^{1/2}$. {Choose  $v_h= \zeta^{1/2}$ in the last displayed equality}, appeal to \eqref{P1 a_h_properties},  and use the Cauchy--Schwarz inequality to arrive at
\begin{align}
\norm{\bar{\partial} _t \zeta^{1/2}}^2+
{\alpha}\norm{\zeta^{1/2}}_h^2 \nonumber
 &\le
( f^{1/2}- u_{tt}^{1/2},(Q-1)\zeta^{1/2})-(\bar{\partial}_t \rho^{1/2},\bar{\partial} _t \zeta^{1/2})+\frac{1}{2}k(R^0,\bar{\partial} _t \zeta^{1/2}) \\ 
 &\le
\norm{f^{1/2}-u_{tt}^{1/2}}\norm{(Q-1)\zeta^{1/2}}+\norm{\bar{\partial}_t \rho^{1/2}}\norm{\bar{\partial}_t \zeta^{1/2}}+\frac{1}{2}k \norm{R^0}\norm{\bar{\partial}_t \zeta^{1/2}}.\label{initial}
\end{align}
On the other hand, by virtue of \eqref{P1 norm_Q} (with $s=0$ and $v=0$), {we obtain}
\begin{align*}
\norm{f^{1/2}-u_{tt}^{1/2}}\norm{(Q-1)\zeta^{1/2}}
& \le C_1h^2\norm{f^{1/2}-u_{tt}^{1/2}}\norm{\zeta^{1/2}}_h \\
& \le {{C_1^2}{{ \alpha}^
{-1}}}h^4\left( \norm{f^{1/2}}^2+\norm{u_{tt}^{1/2}}^2 \right)+{\alpha}/{4}\norm{\zeta^{1/2}}^2_h,
\end{align*}
where  Young's inequality  (with $a=C_1h^2\norm{f^{1/2}-u_{tt}^{1/2}}$, $b=\norm{\zeta^{1/2}}_h$ and $\epsilon=2/\alpha$) plus elementary manipulations have been employed in the last step.\\
{Young's inequality} applied to second (resp. third) term of \eqref{initial} with $a=\norm{\bar{\partial}_t \rho^{1/2}}$ (resp. $a=k \norm{R^0}$), $b=\norm{\bar{\partial}_t \zeta^{1/2}}$ and $\epsilon=2$ (resp. $\epsilon=1$) lead to the estimate
\begin{equation*}
 \norm{\bar{\partial}_t \rho^{1/2}}\norm{\bar{\partial}_t \zeta^{1/2}}+\frac{1}{2}k \norm{R^0}\norm{\bar{\partial}_t \zeta^{1/2}} \le \norm{\bar{\partial}_t \rho^{1/2}}^2+\frac{1}{4}k^2\norm{R^0}^2 
+\frac{1}{2}\norm{\bar{\partial}_t \zeta^{1/2}}^2. 
\end{equation*}
In addition, the bound for $\rho_t$ from Lemma~\ref{P1 ritz_lemma} reveals
\begin{align*}     
\norm{\bar{\partial}_t \rho^{1/2}}  =\norm{\int_0^{t_1}{k^{-1}}{\rho_t(t)\;dt}} \le C_2 h^{2\gamma_0}\norm{u_{t}}_{L^\infty(H^{2+\gamma_0}(\Omega))}. %\label{rho initial}
\end{align*}
Finally, the proof follows from a combination of all these steps in \eqref{initial}, plus the bounds for $R^0$ available from Lemma~\ref{R0}.  
\end{proof}
%\begin{note}
%\end{note}
\begin{thm}[Error Estimate]\label{P1 explicit_Th2} Let $\cal T$ be a quasi-uniform triangulation of $\Omega$ and $u$ ({resp. $U^{n+1}$}) solve \eqref{P1 weak_form} (resp. \eqref{P1 explicit_fully_discrete}). Then, under the assumptions of Lemma \ref{P1 regularity_lemma} and the CFL condition $k\le \beta^{-1/2} {C^{-1}_{\rm inv}} h^2$, for $1 \le m\le N-1$,  we have 
\begin{align*} \norm{\bar{\partial}_t u^{m+1/2}-\bar{\partial}_t U^{m+1/2} } + \norm{ u^{m+1/2}- U^{m+1/2} }_h \lesssim  h^{\gamma_0} L_{(f,u)} +k^2 M_{(u)},
\end{align*}
 {where $L_{(f,u)}$ is given by \eqref{L},}
$$M_{(u)}:=  \norm{u_{ttt}}_{L^\infty {(L^2(\Omega))}} +\norm{u_{tttt}}_{L^2{(L^2(\Omega))}},$$ 
and the  constant absorbed in ``$\lesssim$'' depends on $\alpha$, $\beta$ from \eqref{P1 a_h_properties},  $T$,  and $C_1,C_2$ from Lemma~\ref{bhcompanion_lem}-\ref{P1 ritz_lemma}.
\end{thm}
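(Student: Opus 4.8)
The plan is to mirror the stability argument of Theorem~\ref{P1 explicit_stability}, applied now to the error equation \eqref{P1 explicit_full_error_eqn} for $\zeta^n:=\mathcal R_h u(t_n)-U^n$. Since the estimates for $\rho^n$ are already available from Lemma~\ref{P1 ritz_lemma}, a triangle inequality reduces the task to controlling $\norm{\bar\partial_t\zeta^{m+1/2}}+\norm{\zeta^{m+1/2}}_h$. First I would test \eqref{P1 explicit_full_error_eqn} with $v_h=2k\delta_t\zeta^n$ and reuse verbatim the algebraic identities \eqref{stabilty_ist}--\eqref{stability_second} to produce, after summation over $n=1,\dots,m$, a telescoping left-hand side of the form $\norm{\bar\partial_t\zeta^{m+1/2}}^2+a_h(\zeta^{m+1/2},\zeta^{m+1/2})-\tfrac{k^2}{4}a_h(\bar\partial_t\zeta^{m+1/2},\bar\partial_t\zeta^{m+1/2})$ modulo the initial level $1/2$. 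Exactly as in Theorem~\ref{P1 explicit_stability}, the discrete inverse inequality (Lemma~\ref{dinv}) together with the CFL condition $k\le C_{\rm inv}^{-1}\beta^{-1/2}h^2$ lets me absorb the indefinite term and bound the left-hand side below by $\tfrac34\norm{\bar\partial_t\zeta^{m+1/2}}^2+\alpha\norm{\zeta^{m+1/2}}_h^2$.

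The delicate contribution on the right is the consistency term $\sum_{n=1}^m(f^n-u_{tt}^n,(Q-I)\,2k\delta_t\zeta^n)$. Because $2k\delta_t\zeta^n=\zeta^{n+1}-\zeta^{n-1}$, I would perform an Abel summation (the discrete counterpart of the integration by parts used in the semidiscrete Theorem~\ref{P1 semi_error_estimates}) to transfer the time difference from $\zeta$ onto $g^n:=f^n-u_{tt}^n$. This produces an interior sum weighted by $\delta_t g^p$ (a consistent approximation of $f_t-u_{ttt}$), a pair of endpoint terms at the final level, and one initial term at level $1$; the level-$0$ term drops since $\zeta^0=0$. The endpoint pair $(g^{m-1},(Q-I)\zeta^m)+(g^m,(Q-I)\zeta^{m+1})$ is regrouped as $2(g^m,(Q-I)\zeta^{m+1/2})$ plus an $O(k)$ correction $-k(\bar\partial_t g^{m-1/2},(Q-I)\zeta^m)$, so that the dominant endpoint falls precisely on the level $m+1/2$ controlled by the left-hand side. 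In every term I invoke \eqref{P1 norm_Q} with $s=0$, $v=0$ to gain the factor $h^2$ from the smoother, then apply the weighted Young inequality: the half-norm endpoint is absorbed into $\alpha\norm{\zeta^{m+1/2}}_h^2$, the interior $\norm{\zeta^p}_h^2$ contributions are deferred to the Gr\"onwall sum, and the data factors collapse (since $h^4\le h^{4\gamma_0}$) into $h^{4\gamma_0}(\norm{f}^2_{L^\infty(L^2(\Omega))}+\norm{u_{tt}}^2_{L^\infty(L^2(\Omega))}+\norm{f_t}^2_{L^2(L^2(\Omega))}+\norm{u_{ttt}}^2_{L^2(L^2(\Omega))})$.

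For the two remaining right-hand side terms I write $2k\delta_t\zeta^n=k(\bar\partial_t\zeta^{n+1/2}+\bar\partial_t\zeta^{n-1/2})$ and use Cauchy--Schwarz and Young directly. The term $\sum_n(\bar\partial_t^2\rho^n,2k\delta_t\zeta^n)$ leaves $k\sum_n\norm{\bar\partial_t^2\rho^n}^2$, which through the integral representation of the second difference is bounded by $\norm{\rho_{tt}}^2_{L^2(L^2(\Omega))}$ and then by $h^{4\gamma_0}\norm{u_{tt}}^2_{L^2(H^{2+\gamma_0}(\Omega))}$ via Lemma~\ref{P1 ritz_lemma}; this is the source of the leading $h^{\gamma_0}$ rate in the energy norm. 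The truncation term $\sum_n(\tau^n,2k\delta_t\zeta^n)$ leaves $k\sum_n\norm{\tau^n}^2$, bounded by $k^4\norm{u_{tttt}}^2_{L^2(L^2(\Omega))}$ using Lemma~\ref{tau}, which yields the $k^2$ rate and the term $\norm{u_{tttt}}_{L^2(L^2(\Omega))}$ of $M_{(u)}$. All the $\norm{\bar\partial_t\zeta^{n\pm1/2}}^2$ factors produced here and in the previous step are collected into a single sum $k\sum(\norm{\bar\partial_t\zeta^{\cdot+1/2}}^2+\norm{\zeta^{\cdot+1/2}}_h^2)$; the current index $n=m$ is absorbed into the left-hand side with a small Young coefficient, leaving a sum over strictly earlier indices, as required by Lemma~\ref{P1 d-gronwall}.

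Finally, the initial level-$1/2$ contributions and the single initial smoother term are controlled by Lemma~\ref{P1 lemma_on_norm_initial} (noting $\zeta^0=0$, so $\norm{\zeta^1}_h=2\norm{\zeta^{1/2}}_h$), which already carries the $h^{4\gamma_0}$ and $k^4$ scales. The discrete Gr\"onwall inequality (Lemma~\ref{P1 d-gronwall}) then gives $\norm{\bar\partial_t\zeta^{m+1/2}}^2+\norm{\zeta^{m+1/2}}_h^2\lesssim h^{4\gamma_0}L_{(f,u)}^2+k^4 M_{(u)}^2$, and a last triangle inequality together with Lemma~\ref{P1 ritz_lemma} for $\rho$ recovers the claimed bound with $L_{(f,u)}$ from \eqref{L}. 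I expect the \emph{main obstacle} to be the Abel summation of the smoother term: the centred difference $2k\delta_t\zeta^n$ straddles two time levels, so one must regroup its boundary contributions onto the half-integer level $m+1/2$ --- the only level the left-hand side controls --- while checking that the $O(k)$ correction and the interior difference $\delta_t g^p$ are genuinely higher order and compatible with the discrete Gr\"onwall structure. Retaining the smoother factor $h^2$ throughout, without resorting to the inverse inequality (which would destroy the optimal $h^{\gamma_0}$ rate), is the subtle point.
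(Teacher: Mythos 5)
Your proposal is correct and follows essentially the same route as the paper: test \eqref{P1 explicit_full_error_eqn} with $v_h=2k\delta_t\zeta^n$, reuse the stability identities together with Lemma~\ref{dinv} and the CFL condition to get the lower bound $\tfrac34\norm{\bar\partial_t\zeta^{m+1/2}}^2+\alpha\norm{\zeta^{m+1/2}}_h^2$, Abel-sum the smoother term, bound the $\tau^n$ and $\bar\partial_t^2\rho^n$ contributions via Lemmas~\ref{tau} and~\ref{P1 ritz_lemma}, invoke Lemma~\ref{P1 lemma_on_norm_initial} and the discrete Gr\"onwall inequality, and finish with a triangle inequality. The one obstacle you flag dissolves in the paper's bookkeeping: writing $2k\delta_t\zeta^n=2(\zeta^{n+1/2}-\zeta^{n-1/2})$ makes the summation by parts telescope directly on the half-integer averages, so the endpoint lands on level $m+1/2$ with no $O(k)$ correction and no appearance of $\zeta^m$; also, the interior $\rho_{tt}$ term contributes at rate $h^{2\gamma_0}$ to $\norm{\zeta^{m+1/2}}_h$ --- the leading $h^{\gamma_0}$ in the energy norm actually enters only through $\norm{\rho^{m+1/2}}_h$ in the final triangle inequality.
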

\begin{proof}[\textbf{Proof}]
First, it is easy to see that the choice $v_h = 2 k \delta_t \zeta^n$ in \eqref{P1 explicit_full_error_eqn} leads to
\begin{equation*}
    (\bar{\partial}_t ^2 \zeta^n,2 k \delta_t \zeta^n) + a_h (\zeta^{n},2 k \delta_t \zeta^n)= ( f^{n}- u_{tt}^n, (Q-I)2 k \delta_t \zeta^n)+({\tau}^n-{\bar{\partial}_t}^2 \rho^n, 2 k \delta_t \zeta^n ) .
    \end{equation*}
  {We proceed} with the arguments in Theorem~\ref{P1 explicit_stability} (see \eqref{4.1 all}--\eqref{stab ist}) to obtain
\begin{align}
  \frac{3}{4} \norm{\partial _t \zeta^{m+1/2}}^2+\alpha \norm{ \zeta^{m+1/2}}^2_h &\le\norm{\bar{\partial}_t \zeta^{1/2}}^2+\beta \norm{\zeta^{1/2}}^2_h+ 2 \sum_{n=1}^{m} ( f^n-u_{tt}^n,(Q-I) (\zeta^{n+1/2} -\zeta^{n-1/2}) )\nonumber\\
  &\qquad \qquad \qquad \qquad \quad \quad \; +2k \sum_{n=1}^{m}  ({\tau}^n-{\bar{\partial}_t}^2 \rho^n, \delta_t \zeta^{n} ).   \label{P1 explicit_full_equation}
\end{align}
%Define $I_1:=\sum_{n=1}^{m} ( f^n-u_{tt}^n,(Q-I) (\zeta^{n+1/2} -\zeta^{n-1/2}) )$ and utilize
{The identity}
$$\sum_{n=1}^m g_n(h_n-h_{n-1})= g_mh_m-g_0h_0-\sum_{n=1}^{m} (g_n-g_{n-1})h_{n-1}$$ 
{leads to}
\begin{align*}
I_1&:=\sum_{n=1}^{m} ( f^n-u_{tt}^n,(Q-I) (\zeta^{n+1/2} -\zeta^{n-1/2}) )\\
&=( f^{m}- u_{tt}^{m}, (Q-I) \zeta^ {m+1/2} ) 
 -(  f^{0}- u_{tt}^{0}, (Q-I) \zeta^ {1/2} ) + \sum_{n=0}^{m-1} ( f^{n+1} - f^{n} - u_{tt}^{n+1}+u_{tt}^n,
(Q-I)\zeta^{n+1/2} )\\
&\le C_1h^2\bigg( \norm{f^{m}-  u_{tt}^{m} } \norm{ \zeta^ {m+1/2}}_h
+\norm {  f^{0}- u_{tt}^{0}}\norm{\zeta^ {1/2}}_h+ \sum_{n=0}^{m-1} \norm{\int_{t_{n}}^{t_{n+1}}({f_{t}(t)-u_{ttt}(t)})\dt } \norm{ \zeta^{n+1/2}}_h \bigg).
\end{align*}
{The last step utilizes a} Cauchy--Schwarz inequality and  \eqref{P1 norm_Q} (for $s=0,\;v=0$). 
{The Cauchy--Schwarz inequality} 
\[\norm{\int_{t_{n}}^{t_{n+1}}g(t)\dt }\le \sqrt{k}\left(\int_{t_{n}}^{t_{n+1}}\norm{g(t)}^2\dt \right)^{1/2}\] 
{applies} to the last term on the right-hand side of the above expression for {$I_1$. This and Young's inequality provide}
\begin{align}
   I_1 &\le  {C_1^2{\alpha}^{-1}h^4}\bigg( \norm{f^m-u_{tt}^m}^2+\norm{f^0-u_{tt}^0}^2\bigg) +\frac{\alpha}{4}\bigg(\norm{\zeta^{m+1/2}}_h^2 +\norm{\zeta^{1/2}}_h^2 \bigg)\nonumber\\
   & \qquad \quad +{C_1^2{\alpha}^{-1}Th^4}\sum_{n=0}^{m-1}\int_{t_n}^{t_{n+1}}\norm{f_t(t)-u_{ttt}(t)}^2\dt +\frac{\alpha}{4T}k\sum_{n=0}^{m-1}\norm {\zeta^{n+1/2}}^2_h.\nonumber
\end{align}
{The inequalities $\norm{g+h}^2 \le 2(\norm{g}^2+\norm{h}^2)$  and  $\sum_{n=0}^{m-1} \int_{t_{n}}^{t_{n+1}}\norm{q(t)}^2\dt \le\norm{q(t)}^2_{L^2(L^2(\Omega))}$ lead to}
\begin{align}
I_1 &\le  {4C_1^2{\alpha}^{-1}h^4}\bigg( \norm{f}^2_{L^\infty(L^2(\Omega))}+\norm{u_{tt}}^2_{L^\infty(L^2(\Omega))}\bigg) +\frac{\alpha}{4}\bigg(\norm{\zeta^{m+1/2}}_h^2   +\norm{\zeta^{1/2}}_h^2 \bigg)\nonumber\\
   & \quad 
   +{2C_1^2{\alpha}^{-1}Th^4}\bigg(\norm{f_t}^2_{L^2(L^2(\Omega))}+\norm{u_{ttt}}^2_{L^2(L^2(\Omega))}\bigg)
  +\frac{\alpha}{4T}k\sum_{n=0}^{m-1}\norm {\zeta^{n+1/2}}^2_h.\label{I1}
\end{align}
{On the other hand, $2\delta_t\zeta^n = \bar{\partial}_t \zeta^{n+1/2}+\bar{\partial}_t \zeta^{n-1/2}$ and Cauchy--Schwarz inequality reveals that}
\begin{align*}
    I_2 := 2k \sum_{n=1}^m (\tau^n-{\bar{\partial}_t}^2 \rho^n,\delta_t\zeta^n )& = k \sum_{n=1}^m (\tau^n-{\bar{\partial}_t}^2 \rho^n, \bar{\partial}_t \zeta^{n+1/2}+\bar{\partial}_t \zeta^{n-1/2} ) \nonumber \\
    & \le  k \sum_{n=1}^m \norm{\tau^n-{\bar{\partial}_t}^2 \rho^n} \norm{ \bar{\partial}_t \zeta^{n+1/2}+\bar{\partial}_t \zeta^{n-1/2}}.
\end{align*}
{Young's inequality (with $a=\norm{\tau^n -{\bar{\partial}_t^2 \rho^n}}$, $b=\norm{ \bar{\partial}_t \zeta^{n+1/2}+\bar{\partial}_t \zeta^{n-1/2}}$,  $\epsilon=2T$) and   elementary manipulations lead to}
\begin{align}
I_2
 &\le k \sum_{n=1}^m  \left( T\norm{\tau^n-{\bar{\partial}_t}^2 \rho^n}^2 + \frac{1}{4T} \norm{ \bar{\partial}_t \zeta^{n+1/2}+\bar{\partial}_t \zeta^{n-1/2}}^2 \right) \nonumber\\
 &\le k \sum_{n=1}^m  \left(2T\norm{\tau^n}^2 +2T\norm{{\bar{\partial}_t}^2 \rho^n}^2+ \frac{1}{2T} \left( \norm{ \bar{\partial}_t \zeta^{n+1/2}}^2+\norm{\bar{\partial}_t \zeta^{n-1/2}}^2 \right) \right) .\nonumber
\end{align}
{The truncation error $\tau^n$ from Lemma~\ref{tau}, $\norm{\bar{\partial}_t^2 \rho^n}^2  \le \frac{2}{3}k^{-1}\int_{t_{n-1}}^{t_{n+1}} \norm{\rho_{tt}(t)}^2 \dt $ from  Taylor series, and Lemma~\ref{P1 ritz_lemma} lead to}
 \begin{align}
I_2 &\le\frac{T}{63}k^4\sum_{n=1}^{m}\int_{t_{n-1}}^{t_{n+1}} \norm{u_{tttt}(t)}^2 \dt +\frac{4}{3} C_2Th^{4\gamma_0} \sum_{n=1}^{m}\int_{t_{n-1}}^{t_{n+1}}\norm{u_{tt}(t)}_{H^{2+\gamma_0}(\Omega)}^2 \dt \nonumber\\ 
 &\qquad + \frac{k}{2T} \norm{ \bar{\partial}_t \zeta^{1/2}}^2+ \frac{k}{2T} \norm{ \bar{\partial}_t \zeta^{m+1/2}}^2+ \frac{k}{T} \sum_{n=1}^{m-1}\norm{\bar{\partial}_t \zeta^{n+1/2}}^2 . \label{I2}
\end{align}
{A combination of \eqref{I1}-\eqref{I2} in \eqref{P1 explicit_full_equation},  the fact that $ \frac{k}{2} \le \frac{T}{2}$ in the third and fourth terms on the right-hand side of the above expression,  and  Lemma~\ref{P1 lemma_on_norm_initial} yield  the following bound} 
\[\norm{\bar{\partial}_t \zeta^
{m+1/2}}^2+ \norm{\zeta^{m+1/2} }_h^2 \nonumber \lesssim  h^{4\gamma_0}N_{(f,u)}^2
+{k^4} M_{(u)}^2+
\frac{k}{T}  \sum_{n=0}^{m-1} \norm{\zeta^{n+1/2}}^2_h+\frac{k}{T} \sum_{n=1}^{m-1}\norm{\bar{\partial}_t \zeta^{n+1/2}}^2 \]
with 
\begin{equation}\label{eqn:N}
N_{(f,u)}^2:=\norm{f}^2_{L^\infty(L^2(\Omega))}+\norm{f_t}^2_{L^2(L^2(\Omega))} +\norm{u_{t}}^2_{L^\infty(H^{2+\gamma_0}(\Omega))} +\norm{u_{tt}}^2_{L^\infty(L^2(\Omega))}+\norm{u_{tt}}^2_{L^2(H^{2+\gamma_0}(\Omega))}+\norm{u_{ttt}}^2_{L^2(L^2(\Omega))}.
\end{equation}
{Then,  Lemma~\ref{P1 d-gronwall} provides}
\begin{align}
&\norm{\bar{\partial}_t \zeta^
{m+1/2}}^2+ \norm{\zeta^{m+1/2} }_h^2  \lesssim  h^{4\gamma_0}N_{(f,u)}^2
+{k^4} M_{(u)}^2. \label{cor}
\end{align}
{Triangle inequalities lead to}
\[\norm{\bar{\partial}_t u^{m+1/2}-\bar{\partial}_t U^{m+1/2} } + \norm{ u^{m+1/2}- U^{m+1/2} }_h  \le \| \rho^{m+1/2}  \|+ \| \bar{\partial}_t \rho^{m+1/2}  \|+\| \zeta^{m+1/2}  \|  + \| \bar{\partial}_t \zeta^{m+1/2}  \|.\] 
{This, \eqref{cor}, and  Lemma \ref{P1 ritz_lemma} conclude the proof.}
\end{proof}
\begin{cor}[An $L^2-$Norm Estimate]\label{corr}
   Suppose that $u$ and  { $U^{n+1}$ solves} \eqref{P1 weak_form} and  \eqref{P1 explicit_fully_discrete}, respectively. Then under the assumptions of Theorem \ref{P1 explicit_Th2}, for $1\le m\le N-1$, the following error estimate holds
\begin{align*}
  \norm{ u^{m+1}- U^{m+1} } \lesssim  h^{2\gamma_0} L_{(f,u)} +k^2 M_{(u)} .
\end{align*}
\end{cor}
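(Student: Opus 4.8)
The plan is to exploit the standard error splitting $u^{m+1}-U^{m+1}=\rho^{m+1}+\zeta^{m+1}$ together with a triangle inequality, so that it suffices to bound $\norm{\rho^{m+1}}$ and $\norm{\zeta^{m+1}}$ separately in the $L^2$-norm. The term $\norm{\rho^{m+1}}$ is immediate from Lemma~\ref{P1 ritz_lemma}, which gives $\norm{\rho^{m+1}}\le C_2 h^{2\gamma_0}\norm{u}_{L^\infty(H^{2+\gamma_0}(\Omega))}$, already of the desired order $h^{2\gamma_0}$. The crux is therefore to recover an $L^2$-bound on $\zeta^{m+1}$ from the energy-type estimate established in Theorem~\ref{P1 explicit_Th2}, where only $\norm{\zeta^{m+1/2}}_h$ and $\norm{\bar{\partial}_t\zeta^{m+1/2}}$ are controlled directly.

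First I would use that the scheme is initialized with the Ritz projection, $U^0=\mathcal{R}_h u^0$, so that $\zeta^0=0$, and combine this with the telescoping identity
\begin{equation*}
\zeta^{m+1}=\zeta^{m+1}-\zeta^{0}=\sum_{n=0}^{m}\bigl(\zeta^{n+1}-\zeta^{n}\bigr)=k\sum_{n=0}^{m}\bar{\partial}_t\zeta^{n+1/2}.
\end{equation*}
A triangle inequality then yields $\norm{\zeta^{m+1}}\le k\sum_{n=0}^{m}\norm{\bar{\partial}_t\zeta^{n+1/2}}$, which reduces the task to controlling the $L^2$-norm of the discrete velocity error $\bar{\partial}_t\zeta^{n+1/2}$ \emph{uniformly} over $0\le n\le m$.

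Next I would invoke \eqref{cor} from the proof of Theorem~\ref{P1 explicit_Th2}, which holds for every index $1\le n\le N-1$ and furnishes $\norm{\bar{\partial}_t\zeta^{n+1/2}}\lesssim h^{2\gamma_0}N_{(f,u)}+k^2 M_{(u)}$, while the initial term $\norm{\bar{\partial}_t\zeta^{1/2}}$ is covered separately by Lemma~\ref{P1 lemma_on_norm_initial} with the same order. Summing these uniform bounds and using $k(m+1)\le kN=T$ gives $\norm{\zeta^{m+1}}\lesssim h^{2\gamma_0}N_{(f,u)}+k^2 M_{(u)}$. Combining with the bound for $\rho^{m+1}$ and observing that both $N_{(f,u)}$ (from \eqref{eqn:N}) and $\norm{u}_{L^\infty(H^{2+\gamma_0}(\Omega))}$ are dominated by $L_{(f,u)}$ from \eqref{L} concludes the argument.

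The main point (more a key observation than a technical obstacle) is that, precisely because the scheme is started from the Ritz projection so that $\zeta^0=0$, the $L^2$-norm of $\zeta^{m+1}$ can be reconstructed entirely from the already-available $L^2$-norms of the discrete time differences $\bar{\partial}_t\zeta^{n+1/2}$ via telescoping; this is the mechanism by which the modified Ritz projection delivers the optimal $L^2$-rate $h^{2\gamma_0}$ ``for free,'' bypassing any duality or Aubin--Nitsche argument. One should only take care to apply the per-step estimate \eqref{cor} at each intermediate index and to treat the initial half-step via Lemma~\ref{P1 lemma_on_norm_initial}.
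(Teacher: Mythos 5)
Your proof is correct, and it reaches the bound on $\norm{\zeta^{m+1}}$ by a genuinely different mechanism than the paper. The paper uses the single-step identity $\zeta^{m+1}=\zeta^{m+1/2}+\tfrac{1}{2}k\,\bar{\partial}_t\zeta^{m+1/2}$ together with $\norm{\zeta^{m+1/2}}\le\norm{\zeta^{m+1/2}}_h$, so that \eqref{cor} applied at the single index $m$ immediately yields $\norm{\zeta^{m+1}}\lesssim \norm{\bar{\partial}_t\zeta^{m+1/2}}+\norm{\zeta^{m+1/2}}_h\lesssim h^{2\gamma_0}N_{(f,u)}+k^2M_{(u)}$; this is a one-line argument but implicitly relies on the discrete Poincar\'e-type embedding of the $L^2$-norm into the mesh-dependent norm $\norm{\cdot}_h$. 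You instead telescope $\zeta^{m+1}=k\sum_{n=0}^{m}\bar{\partial}_t\zeta^{n+1/2}$ using $\zeta^0=0$, apply \eqref{cor} at every intermediate index (plus Lemma~\ref{P1 lemma_on_norm_initial} for the half-step $n=0$), and absorb the factor $k(m+1)\le T$ into the constant. Your route uses only the velocity part of \eqref{cor} and needs no comparison between $\norm{\cdot}$ and $\norm{\cdot}_h$, at the price of invoking the uniform-in-$n$ validity of \eqref{cor} and accumulating the factor $T$ (harmless, since the constant already depends on $T$). Both arguments correctly reduce to the same ingredients ($\zeta^0=0$ from the Ritz initialization, the energy estimate \eqref{cor}, and Lemma~\ref{P1 ritz_lemma} for $\rho^{m+1}$), and your observation that $N_{(f,u)}$ and $\norm{u}_{L^\infty(H^{2+\gamma_0}(\Omega))}$ are dominated by $L_{(f,u)}$ closes the argument as in the paper.
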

\begin{proof}[\textbf{Proof}]
We consider the bound in {\eqref{cor} and} utilize 
%shows that 
%$\norm{\bar{\partial}_t \zeta^
%{m+1/2}}^2+ \norm{\zeta^{m+1/2} }_h^2 
%\lesssim  {(f,u)}^2+k^4M_{(u)}^2$.
that $\zeta^{m+1}=\zeta^{m+1/2}+\frac{1}{2}k\bar{\partial}_t\zeta^{m+1/2}.$
{Since} $\norm{\zeta^{m+1/2} } \le \norm{\zeta^{m+1/2} }_h$, {we can  conclude} that 
\[\norm{\zeta^{m+1}} \lesssim \norm{\bar{\partial}_t \zeta^
{m+1/2}}+ \norm{\zeta^{m+1/2} }_h\lesssim  h^{2\gamma_0}N_{(f,u)}\nonumber+k^2M_{(u)}.\]
Therefore, a triangle inequality shows $ \norm{ u^{m+1}- U^{m+1} }  \le \| \rho^{m+1}  \| +\| \zeta^{m+1}  \| $. This estimate and Lemma \ref{P1 ritz_lemma} lead to the desired result. 
\end{proof}

\begin{rem} [Error Analysis Using the Semidiscrete Scheme]\label{sem}
The error analysis for \eqref{P1 explicit_fully_discrete} can be done using semidiscrete estimates. The proof is given in the Appendix (see {Sub}section \ref{p1 explicit_fully_semi_section}). However, it is not clear whether the bounds for $\norm{u_{httt}}_{L^2(L^2(\Omega))}$ and $\norm{u_{htttt}}_{L^2(L^2(\Omega))}$ can be established using the regularity assumptions on the {given} data as stated in Lemma~\ref{P1 regularity_lemma}. 
%Note that the proofs of such bounds can also be carried out by using energy arguments similar to that in \cite{Bdeka}. 
%and are therefore omitted in this paper.
\end{rem}

%%%%%%%%%%%%%%%%%%%%%%%%%%%%%%%%%%%%%%%%%%%%%%%%%%%%%%%%%%%%%%%%%%%
\section{Implicit Newmark Scheme} \label{P1 implicit_direct}
The stability result in Theorem \ref{P1 explicit_stability} indicates that the explicit schemes %used for time discretization
are {\it conditionally stable}.  
%This motivates the need for a scheme independent of the CFL condition. 
{Moreover, ${\mathcal T}$} is assumed to be {\it quasi-uniform} in Section \ref{P1 fully_discrete_section}. The implicit scheme introduced in this section  motivated by {\cite{MR0349045} for the wave equation {\it circumvents both these assumptions}. 

\medskip
 First, we specify that the initial approximations $U^0$ and $U^1$ are defined as in the explicit scheme:
\begin{align}
U^0 &={\cal R}_h{u^0} \text { and } 
 2k^{-1}(\bar{\partial}_t U^{1/2} , v_h )+a_h(U^{1/2},v_h) = (f^{1/2}+ {2k^{-1}v^0},v_h )  \text{ for all }v_h \in V_h.\label{P1 ic_2 } 
\end{align}
Given $U^0 \text{ and } U^1$, for $n=1,\ldots N-1$, the implicit fully-discrete problem seeks $U^{n+1} \in V_h$ such that (see \cite[Equation (34)]{MR4412337}, with $\beta=1/4$ and $\gamma=1/2$)
\begin{equation}
    (\bar{\partial}_t ^2 U^n,v_h) + a_h (U^{n,1/4},v_h )= (f^{n,1/4},v_h ) \text{ for all } v_h \in V_h. \label{P1 fully_discrete}
    \end{equation}
%We stress that the scheme \eqref{P1 ic_2 }-\eqref{P1 fully_discrete} remains unconditionally stable if the superscript ${n,1/4}$ above is replaced by ${n,\omega}$, for any $\omega > 1/4$. \textcolor{blue}{ However the truncation error, stated in next lemma, is minimized for the choice $\omega=1/4 $, and hence we adopt this value for the rest of the paper}.
\begin{lemma}[Truncation Errors for the Implicit Scheme {(see page~11 in \cite{MR3003381})}]\label{r}
Let $\bar{\partial}^2_t U^n $  approximate  $u^{n,1/4}_{tt}$ (resp. $u_{htt}^{n,1/4}$). Then the truncation error  
\begin{align*}
\displaystyle{r^n }:= \bar{\partial}^2_t u^n -u_{tt}^{n,1/4} =\frac {1}{12} \int_{-k}^{k} (k -|s|)(2(1-k^{-1}{|s|})^2-3)u_{tttt}(t_n+s)\ds \\
\bigg(\text{resp.}\quad  \displaystyle \widetilde{r}^n:= \bar{\partial}^2_t u_h^n -u_{htt}^{n,1/4}= \frac{1}{12} \int_{-k}^{k} (k -|s|) (2(1-k^{-1}{|s|})^2-3)u_{htttt}(t_n+s)\ds \bigg)
\end{align*} 
is bounded by 
$\displaystyle   \norm{r^n }^2 \le  \frac{41}{2520}k^3 \int_{t_{n-1}}^{t_{n+1}} \norm{u_{tttt}}^2 {\ds} \quad \bigg(\text{resp. } \norm{{\widetilde{r} }^n  }^2  \le \frac{41}{2520} k^3\int_{t_{n-1}}^{t_{n+1}} \norm{u_{htttt}}^2 {\ds } \bigg). $
\end{lemma}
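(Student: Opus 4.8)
The plan is to first derive the exact Peano--kernel (integral) representation of the truncation error and then obtain the $L^2$-bound from a Cauchy--Schwarz argument in time combined with an explicit evaluation of the kernel norm; the argument runs along the same lines as Lemmas~\ref{R0} and~\ref{tau}. Since Lemma~\ref{tau} already supplies the representation of $\tau^n:=\bar{\partial}^2_t u^n-u_{tt}^n$, I would write
$$r^n=\bar{\partial}^2_t u^n-u_{tt}^{n,1/4}=\tau^n-\bigl(u_{tt}^{n,1/4}-u_{tt}^n\bigr),$$
which reduces the task to representing the correction term $u_{tt}^{n,1/4}-u_{tt}^n=\tfrac14\bigl(u_{tt}^{n+1}-2u_{tt}^n+u_{tt}^{n-1}\bigr)$.

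For this second difference I would apply Taylor's theorem with integral remainder to $u_{tt}(t_n\pm k)$, expanding to first order so that the remainder carries $u_{tttt}$. After the change of variables $s\mapsto-s$ on the backward expansion the first-order (odd) terms cancel, yielding
$$u_{tt}^{n,1/4}-u_{tt}^n=\tfrac14\int_{-k}^{k}(k-|s|)\,u_{tttt}(t_n+s)\,\ds.$$
Inserting the representation of $\tau^n$ from Lemma~\ref{tau}, factoring $(k-|s|)$ out of both contributions, and using $(1-k^{-1}|s|)^2=(k-|s|)^2/k^2$, I would then check that the two kernels collapse to $\tfrac{1}{12}(k-|s|)\bigl(2(1-k^{-1}|s|)^2-3\bigr)$, which is exactly the claimed identity for $r^n$. (Alternatively, one may expand all four nodal values $u^{n\pm1}$ and $u_{tt}^{n\pm1}$ directly with integral remainders and collect; the route through $\tau^n$ is shorter.)

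For the stated bound, denote the kernel by $K(s):=\tfrac{1}{12}(k-|s|)\bigl(2(1-k^{-1}|s|)^2-3\bigr)$. Applying the Cauchy--Schwarz inequality in the $s$-variable gives, pointwise in $x$,
$$|r^n(x)|^2\le\Bigl(\int_{-k}^{k}K(s)^2\,\ds\Bigr)\int_{-k}^{k}|u_{tttt}(x,t_n+s)|^2\,\ds;$$
integrating over $\Omega$ and using Fubini converts the last factor into $\int_{t_{n-1}}^{t_{n+1}}\norm{u_{tttt}(s)}^2\,\ds$, since $t_n+s$ ranges over $[t_{n-1},t_{n+1}]$ as $s$ runs over $[-k,k]$. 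It then remains to evaluate the kernel norm: with the symmetric substitution $\eta=1-|s|/k$ one finds $K=\tfrac{k\eta}{12}(2\eta^2-3)$ and
$$\int_{-k}^{k}K(s)^2\,\ds=\frac{k^3}{72}\int_0^1\eta^2\bigl(2\eta^2-3\bigr)^2\,d\eta=\frac{k^3}{72}\cdot\frac{41}{35}=\frac{41}{2520}\,k^3,$$
which delivers the asserted estimate. The discrete counterpart for $\widetilde r^n$ is obtained verbatim, replacing $u$ by $u_h$ throughout, since the derivation uses only the differencing of a sufficiently smooth time-dependent function.

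The computation is elementary, and the only delicate point is bookkeeping: one must keep the kernels of $\tau^n$ and of the correction term in a common factored form so they combine cleanly, and then evaluate $\int_0^1\eta^2(2\eta^2-3)^2\,d\eta=\tfrac{4}{7}-\tfrac{12}{5}+3=\tfrac{41}{35}$ to pin down the exact constant $\tfrac{41}{2520}$. No regularity beyond $u_{tttt}\in L^2(L^2(\Omega))$ (guaranteed by Lemma~\ref{P1 regularity_lemma}) enters, so the main obstacle is purely the careful algebra rather than any analytic subtlety.
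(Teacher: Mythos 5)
Your proposal is correct: the decomposition $r^n=\tau^n-(u_{tt}^{n,1/4}-u_{tt}^n)$, the Peano-kernel representations, the recombination of the kernels into $\tfrac{1}{12}(k-|s|)\bigl(2(1-k^{-1}|s|)^2-3\bigr)$, and the evaluation $\tfrac{k^3}{72}\cdot\tfrac{41}{35}=\tfrac{41}{2520}k^3$ all check out. The paper does not prove this lemma (it is quoted from the cited reference), so your argument simply supplies, correctly, the details the paper omits, in the same spirit as Lemmas~\ref{R0} and~\ref{tau}.
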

\subsection{Stability Analysis}
\begin{thm}[Stability]\label{P1 full_stabilty}
The implicit scheme \eqref{P1 ic_2 }-\eqref{P1 fully_discrete} is stable and for $1 \le m \le N-1$
\begin{equation*}
\norm{\bar{\partial}_tU^{m+1/2} }+\norm{U^{m+1/2}}_h \lesssim  \norm{\bar{\partial}_t U^{1/2}} +\norm{U^{1/2}}_h+\norm{f}_{L^\infty(L^2(\Omega))}.
\end{equation*}
{The  constant absorbed in "$\lesssim$'' depends on $\alpha$, $\beta$ from \eqref{P1 a_h_properties} and  $T$.}
\end{thm}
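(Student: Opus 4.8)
The plan is to mimic the energy argument of Theorem~\ref{P1 explicit_stability}, testing \eqref{P1 fully_discrete} with $v_h = 2k\delta_t U^n$, while exploiting the fact that the implicit scheme evaluates $a_h$ at the averaged state $U^{n,1/4}$ rather than at $U^n$. First I would treat the inertial term exactly as before: the identities $\bar{\partial}_t^2 U^n = k^{-1}(\bar{\partial}_t U^{n+1/2} - \bar{\partial}_t U^{n-1/2})$ and $2k\delta_t U^n = k(\bar{\partial}_t U^{n+1/2} + \bar{\partial}_t U^{n-1/2})$ give the telescoping identity
\[
(\bar{\partial}_t^2 U^n, 2k\delta_t U^n) = \norm{\bar{\partial}_t U^{n+1/2}}^2 - \norm{\bar{\partial}_t U^{n-1/2}}^2.
\]

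The decisive step is the bilinear-form term. Here I would use the algebraic identity $U^{n,1/4} = \tfrac{1}{2}(U^{n+1/2} + U^{n-1/2})$ together with $2k\delta_t U^n = 2(U^{n+1/2} - U^{n-1/2})$, so that symmetry and bilinearity of $a_h$ yield the \emph{exact} telescoping
\[
a_h(U^{n,1/4}, 2k\delta_t U^n) = a_h(U^{n+1/2}, U^{n+1/2}) - a_h(U^{n-1/2}, U^{n-1/2}).
\]
This is precisely where the implicit scheme improves on the explicit one: in the explicit case $a_h$ acts on $U^n = U^{n,1/4} - \tfrac{1}{4}k^2\bar{\partial}_t^2 U^n$, and the extra piece produces the term $-\tfrac{1}{4}k^2 a_h(\bar{\partial}_t U^{m+1/2}, \bar{\partial}_t U^{m+1/2})$ that had to be absorbed through the discrete inverse inequality (Lemma~\ref{dinv}), forcing both the CFL condition and quasi-uniformity. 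The averaging in $U^{n,1/4}$ cancels that piece entirely, so no inverse estimate is invoked.

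Next I would sum from $n=1$ to $n=m$ so that both telescoping sums collapse, and invoke ellipticity and continuity from \eqref{P1 a_h_properties} to bound $a_h(U^{m+1/2}, U^{m+1/2})$ below by $\alpha\norm{U^{m+1/2}}_h^2$ and $a_h(U^{1/2}, U^{1/2})$ above by $\beta\norm{U^{1/2}}_h^2$. The forcing contribution $k\sum_{n=1}^m (f^{n,1/4}, \bar{\partial}_t U^{n+1/2} + \bar{\partial}_t U^{n-1/2})$ is handled verbatim as in Theorem~\ref{P1 explicit_stability}, via Cauchy--Schwarz, Young's inequality (with $\epsilon = 2T$), and the bound $\norm{f^{n,1/4}} \le \norm{f}_{L^\infty(L^2(\Omega))}$; after reindexing the resulting sum of $\norm{\bar{\partial}_t U^{n\pm 1/2}}^2$ terms this gives an estimate of the form
\[
\norm{\bar{\partial}_t U^{m+1/2}}^2 + \norm{U^{m+1/2}}_h^2 \lesssim \norm{\bar{\partial}_t U^{1/2}}^2 + \norm{U^{1/2}}_h^2 + \norm{f}_{L^\infty(L^2(\Omega))}^2 + \frac{k}{T}\sum_{n=1}^{m-1}\norm{\bar{\partial}_t U^{n+1/2}}^2.
\]
Finally, the discrete Gr\"onwall inequality (Lemma~\ref{P1 d-gronwall}) closes the argument, and taking square roots yields the stated bound.

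I do not expect a genuine obstacle here: the implicit case is in fact \emph{simpler} than the explicit one. The only point requiring care is the verification that $U^{n,1/4} = \tfrac{1}{2}(U^{n+1/2} + U^{n-1/2})$, which is exactly what makes the stiffness term telescope without any leftover; once that is observed, the remaining manipulations are a direct and slightly shortened replay of the explicit stability proof, with the CFL-dependent steps simply omitted.
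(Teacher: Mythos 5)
Your proposal is correct and follows essentially the same route as the paper's proof: testing with $v_h = 2k\delta_t U^n$, using the identity $U^{n,1/4} = \tfrac{1}{2}(U^{n+1/2}+U^{n-1/2})$ to obtain exact telescoping of the stiffness term, then Cauchy--Schwarz, Young's inequality with $\epsilon=2T$, and the discrete Gr\"onwall lemma. Your observation that the averaging eliminates the $-\tfrac14 k^2 a_h(\bar{\partial}_t U^{m+1/2},\bar{\partial}_t U^{m+1/2})$ term, and with it the need for the inverse inequality and CFL condition, is precisely the point of the implicit scheme as presented in the paper.
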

\begin{proof}[\textbf{Proof}]
We choose $v_h=2k \delta_tU^n$ in \eqref{P1 fully_discrete} to obtain
 \begin{equation*}
 (\bar{\partial}_t ^2 U^n,2k \delta_tU^n) + a_h (U^{n,1/4},2k \delta_tU^n)= (f^{n,1/4},2k \delta_tU^n ).    
\end{equation*}
 The identities \begin{align*}
 \bar{\partial}_t ^2 U^n&=\frac{1}{k}(\bar{\partial}_t U^{n+1/2}-\bar{\partial}_t U^{n-1/2}),\\
 U^{n,1/4}&=\frac{1}{2}(U^{n+1/2}+U^{n-1/2}),\\
 2k \delta_t U^n&=2(U^{n+1/2}-U^{n-1/2})=k(\bar{\partial}_t U^{n+1/2}+\bar{\partial}_t U^{n-1/2})
 \end{align*}show 
 \begin{align*}
    ( \bar{\partial}_t U^{n+1/2}-\bar{\partial}_t U^{n-1/2} ,\bar{\partial}_t U^{n+1/2}+\bar{\partial}_t U^{n-1/2} ) &+a_h ( U^{n+1/2}+U^{n-1/2},U^{n+1/2}-U^{n-1/2})\\
    &=
    k \;(f^{n,1/4},\bar{\partial}_tU^{n+1/2}+\bar{\partial}_t U^{n-1/2}).
 \end{align*} 
{A summation from $n=1,\ldots, m$, for any $m$, $1 \le m\le N-1$ leads to}
 \begin{align*}  \norm{\bar{\partial}_t U^{m+1/2}}^2 &+ a_h({U^{m+1/2}},{U^{m+1/2}})=\norm{\bar{\partial}_t U^{1/2}}^2+a_h({U^{1/2}},{U^{1/2}})+
    k\sum_{n=1}^{m}(f^{n,1/4},\bar{\partial}_tU^{n+1/2}+\bar{\partial}_t U^{n-1/2}).
 \end{align*} 
Then, the continuity and ellipticity of $a_h(\cdot,\cdot)$ (cf. \eqref{P1 a_h_properties}) together with Cauchy--Schwarz inequality yield 
 \begin{align*}
    \norm{\bar{\partial}_t U^{m+1/2}}^2+&\alpha \norm{U^{m+1/2}}_h^2 \le \norm{\bar{\partial}_t U^{1/2}}^2+\beta\norm{U^{1/2}}_h^2+
k\sum_{n=1}^{m}\norm{f^{n,1/4}}\norm{\bar{\partial}_tU^{n+1/2}+\bar{\partial}_t U^{n-1/2}}.
 \end{align*} 
 On the other hand, Young's inequality (with $a=\norm{f^{n,1/4}},b=\norm{\bar{\partial}_t U^{n+1/2}+\bar{\partial}_t U^{n-1/2}}$, $\epsilon=2T$) applied to last term on the right-hand side and elementary manipulations imply that 
\begin{align*}
    \norm{\bar{\partial}_t U^{m+1/2}}^2+\alpha \norm{U^{m+1/2}}_h^2 &\le \norm{\bar{\partial}_t U^{1/2}}^2+\beta\norm{U^{1/2}}_h^2+T k \sum_{n=1}^{m}\norm{f^{n,1/4}}^2+\frac{k}{2T}\norm{\bar{\partial}_tU^{1/2}}^2\\   & \quad+\frac{k}{2T}\norm{\bar{\partial}_tU^{m+1/2}}^2+\frac{k}{T}\sum_{n=1}^{m-1}\norm{\bar{\partial}_tU^{n+1/2}}^2.
 \end{align*} 
 Note that $T k \sum_{n=1}^{m}\norm{f^{n,1/4}}^2 \le mT k  \norm{f}^2_{L^\infty(L^2(\Omega))} \le T^2 \norm{f}^2_{L^\infty(L^2(\Omega))} $ and $\frac{k}{2T}\le\frac{1}{2} $. 
 This leads to 
 %Take $\frac{k}{2T} \norm{\bar{\partial}_tU^{m+1/2}}^2$ to the left hand side of the inequality to conclude
 \begin{align*}
    \norm{\bar{\partial}_t U^{m+1/2}}^2+ \norm{U^{m+1/2}}_h^2 \lesssim 
    \norm{\bar{\partial}_t U^{1/2}}^2 +\norm{U^{1/2}}_h^2+\norm{f}_{L^\infty(L^2(\Omega))}^2+ \frac{k}{T} \sum_{n=1}^{m-1}\norm{\bar{\partial}_t U^{n+1/2}}^2.
    \end{align*}
{An application of Lemma \ref{P1 d-gronwall} concludes the proof.}
\end{proof}

%%%%%%%%%%%%%%%%%%%%%%%%%%%%%%%%%%%%%%%%%%%%%%%%%%%%%%
\subsection{Error Estimates}%\label{P1 section4.2}
Since \eqref{P1 weak_form} is true for all $v \in  H^2_0(\Omega)$, we have in particular  
\begin{equation*}   
({u_{tt}(t)},Qv_h )+a(u(t),Qv_h)=(f(t),Q v_h) \text{ for all }v_h \in V_h \text{ and } t > 0. 
\end{equation*}
Let us recall that $r^n:= \bar{\partial}^2_t u^n -u_{tt}^{n,1/4}$  from Lemma~\ref{r}. We then multiply the above equation by $1/4$ at $t=t_{n+1}$,\ $t=t_{n-1}$ and $1/2$ at $t=t_{n}$, and sum up the resulting three equations to obtain 
\begin{equation}
({{\bar{\partial}_t}^2 u^n},Qv_h )+a(u^{n,1/4},Qv_h)=(f^{n,1/4}+{ r}^n,Q v_h) \quad \text{ for all }v_h \in V_h, \quad n=1,\ldots, N-1. \label{P1 Q_continuous_formulation}
\end{equation}
{Subtract \eqref{P1 fully_discrete} from \eqref{P1 Q_continuous_formulation}}  and utilize the definition of the Ritz projection \eqref{P1 ritz_projection} to obtain
\begin{align*}
(f^{n,1/4}, (Q-I)v_h )+(r^n,Qv_h)
    &=(\bar{\partial}_t ^2 u^n, Q v_h)- ( \bar{\partial}_t ^2 U^n, v_h )+a_h(\mathcal{R}_hu^{n,1/4},v_h)  - a_h(U^{n,1/4},v_h)\\ 
   &= ({\bar{\partial}_t}^2 \rho^n, v_h )+ ( \partial _t^2\zeta^n,v_h ) +a_h ( \zeta^{n,1/4},v_h )+(r^n+u_{tt}^{n,1/4},(Q-I)v_h).
\end{align*}
For $n=1, \dots, N-1$, the error equation is given by
\begin{equation}
( \bar{\partial}_t ^2\zeta^n,v_h ) +a_h ( \zeta^{n,1/4},v_h  ) =  ( f^{n,1/4}- u_{tt} ^{n,1/4}, (Q-I)v_h )+(r^n-{\bar{\partial}_t}^2 \rho^n,v_h) \text{ for all }v_h \in V_h .\label{P1 full_error_eqn}
\end{equation}

 \begin{thm}[Error Estimate]%\label{P1 implicit_Th2} 
 Let the regularity results in Lemma \ref{P1 regularity_lemma}  hold. {Then, for $1 \le m\le N-1$, we have that}
 \begin{align*} \norm{\bar{\partial}_t u^{m+1/2}-\bar{\partial}_t U^{m+1/2} } + \norm{ u^{m+1/2}- U^{m+1/2} }_h \lesssim  h^{\gamma_0} L_{(f,u)} +k^2 M_{(u)}, 
\end{align*}
where $L_{(f,u)}$, 
$M_{(u)}$ are defined in Theorem \ref{P1 explicit_Th2} and the  constant absorbed in ``$\lesssim$'' depends on $\alpha$, $\beta$ from \eqref{P1 a_h_properties},  $T$,  and $C_1$, $C_2$ from Lemmas~\ref{bhcompanion_lem}-\ref{P1 ritz_lemma}.
    \end{thm}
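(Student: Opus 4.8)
The plan is to mirror the structure of the proof of Theorem~\ref{P1 explicit_Th2}, but to exploit the implicit averaging $\zeta^{n,1/4}$ in the elliptic term of the error equation~\eqref{P1 full_error_eqn} so as to avoid the inverse inequality and the CFL constraint. First I would test \eqref{P1 full_error_eqn} with $v_h = 2k\,\delta_t \zeta^n$ and use the identities already recorded in the proof of Theorem~\ref{P1 full_stabilty}. The acceleration term gives
\[
(\bar{\partial}_t^2 \zeta^n, 2k\,\delta_t \zeta^n) = \norm{\bar{\partial}_t \zeta^{n+1/2}}^2 - \norm{\bar{\partial}_t \zeta^{n-1/2}}^2.
\]
The decisive point is the elliptic term: writing $\zeta^{n,1/4} = \tfrac12(\zeta^{n+1/2} + \zeta^{n-1/2})$ and $2k\,\delta_t\zeta^n = 2(\zeta^{n+1/2} - \zeta^{n-1/2})$ and using the symmetry of $a_h(\cdot,\cdot)$ from \eqref{P1 a_h_properties} yields
\[
a_h(\zeta^{n,1/4}, 2k\,\delta_t\zeta^n) = a_h(\zeta^{n+1/2},\zeta^{n+1/2}) - a_h(\zeta^{n-1/2},\zeta^{n-1/2}),
\]
which telescopes cleanly. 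In contrast to the explicit case (compare \eqref{stability_second}), no spurious term of the form $-\tfrac14 k^2 a_h(\bar{\partial}_t\zeta^{m+1/2},\bar{\partial}_t\zeta^{m+1/2})$ is produced; this is precisely why the argument will need neither Lemma~\ref{dinv} nor the quasi-uniformity of $\mathcal{T}$. Summing from $n=1$ to $m$ and invoking the ellipticity and continuity in \eqref{P1 a_h_properties} then gives
\[
\norm{\bar{\partial}_t\zeta^{m+1/2}}^2 + \alpha\norm{\zeta^{m+1/2}}_h^2 \le \norm{\bar{\partial}_t\zeta^{1/2}}^2 + \beta\norm{\zeta^{1/2}}_h^2 + I_1 + I_2,
\]
with $I_1, I_2$ collecting the two right-hand contributions.

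For the right-hand side, I would treat the consistency term $I_1 := 2\sum_{n=1}^m (f^{n,1/4} - u_{tt}^{n,1/4}, (Q-I)(\zeta^{n+1/2}-\zeta^{n-1/2}))$ by Abel summation by parts, exactly as for $I_1$ in Theorem~\ref{P1 explicit_Th2}: the boundary contributions at $n=m$ and $n=0$ are absorbed using \eqref{P1 norm_Q} (with $s=0$, $v=0$) and Young's inequality into $\tfrac{\alpha}{4}(\norm{\zeta^{m+1/2}}_h^2 + \norm{\zeta^{1/2}}_h^2)$ plus an $h^4$ data term, while the telescoped differences $f^{n+1,1/4}-f^{n,1/4}$ and $u_{tt}^{n+1,1/4}-u_{tt}^{n,1/4}$ are rewritten as time integrals of $f_t$ and $u_{ttt}$ and bounded by $h^4(\norm{f_t}^2_{L^2(L^2(\Omega))} + \norm{u_{ttt}}^2_{L^2(L^2(\Omega))})$ together with a $\tfrac{k}{T}\sum_{n}\norm{\zeta^{n+1/2}}_h^2$ remainder. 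The truncation term $I_2 := 2k\sum_{n=1}^m(r^n - \bar{\partial}_t^2\rho^n, \delta_t\zeta^n)$ is handled verbatim as $I_2$ in Theorem~\ref{P1 explicit_Th2}, the only change being that the bound for $r^n$ is now taken from Lemma~\ref{r} instead of Lemma~\ref{tau}; since both truncation estimates scale like $k^3\int_{t_{n-1}}^{t_{n+1}}\norm{u_{tttt}}^2\ds$, this produces the same $k^4\norm{u_{tttt}}_{L^2(L^2(\Omega))}^2$ contribution, while $\norm{\bar{\partial}_t^2\rho^n}^2$ is controlled by $\tfrac23 k^{-1}\int_{t_{n-1}}^{t_{n+1}}\norm{\rho_{tt}}^2\ds$ and Lemma~\ref{P1 ritz_lemma} to give an $h^{4\gamma_0}\norm{u_{tt}}^2_{L^2(H^{2+\gamma_0}(\Omega))}$ term.

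Finally I would bound the initial errors $\norm{\bar{\partial}_t\zeta^{1/2}}^2 + \norm{\zeta^{1/2}}_h^2$ by Lemma~\ref{P1 lemma_on_norm_initial}, which applies because the implicit scheme uses the same starting values $U^0 = \mathcal{R}_h u^0$ and $U^1$ as the explicit one. Collecting everything leaves an inequality of the form
\[
\norm{\bar{\partial}_t\zeta^{m+1/2}}^2 + \norm{\zeta^{m+1/2}}_h^2 \lesssim h^{4\gamma_0}N_{(f,u)}^2 + k^4 M_{(u)}^2 + \frac{k}{T}\sum_{n=0}^{m-1}\left(\norm{\zeta^{n+1/2}}_h^2 + \norm{\bar{\partial}_t\zeta^{n+1/2}}^2\right),
\]
with $N_{(f,u)}$ from \eqref{eqn:N}, to which the discrete Gr\"onwall Lemma~\ref{P1 d-gronwall} is applied to remove the sum. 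A triangle inequality splitting $u^{m+1/2}-U^{m+1/2} = \rho^{m+1/2}+\zeta^{m+1/2}$ (and likewise for the $\bar{\partial}_t$ terms), together with the $\rho$-estimates from Lemma~\ref{P1 ritz_lemma}, then yields the claimed bound. The only genuinely new point relative to Theorem~\ref{P1 explicit_Th2} is the clean telescoping of $a_h(\zeta^{n,1/4}, 2k\,\delta_t\zeta^n)$; once that is in place the rest of the argument is essentially identical and, crucially, free of the CFL condition and the quasi-uniformity assumption.
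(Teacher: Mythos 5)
Your proposal is correct and follows essentially the same route as the paper: the clean telescoping of $a_h(\zeta^{n,1/4},2k\,\delta_t\zeta^n)$ (which is exactly the mechanism in Theorem~\ref{P1 full_stabilty} and the reason no inverse inequality or CFL condition is needed), Abel summation for the consistency term, Lemma~\ref{r} for the truncation error, Lemma~\ref{P1 lemma_on_norm_initial} for the starting values, and the discrete Gr\"onwall lemma. The only detail you gloss over is that the telescoped differences of the quarter-averages $f^{n+1,1/4}-f^{n,1/4}$ span the window $[t_{n-1},t_{n+2}]$, so the resulting $L^2$-in-time sum carries a factor $3$ rather than $1$; this is harmless.
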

\begin{proof}[\textbf{Proof}]
The choice $v_h = 2 k \delta_t \zeta^n=2(\zeta^{n+1/2} -\zeta^{n-1/2})$ in \eqref{P1 full_error_eqn} and steps analogous to Theorem~\ref{P1 full_stabilty} show
\begin{align}
\norm{\bar{\partial}_t \zeta^{m+1/2}}^2+& \alpha\norm{{\zeta^{m+1/2}}}_h^2-\norm{\bar{\partial}_t\zeta^{1/2}}^2-\beta \norm{{\zeta^{1/2}}}_h^2\nonumber\\
&=
2\sum_{n=1}^{m}
 ( f^{n,1/4}- u_{tt}^{n,1/4}, (Q-I)(\zeta^{n+1/2} -\zeta^{n-1/2}))+2k\sum_{n=1}^{m}(r^n-{\bar{\partial}_t}^2 \rho^n,\delta _t \zeta^{n} ).\label{J}
\end{align} 
Then the identity $\sum_{n=2}^m g_n(h_n-h_{n-1})= g_mh_m-g_1h_1-\sum_{n=2}^{m} (g_n-g_{n-1})h_{n-1}$ shows that 
\begin{align}
 J_1&:=\sum_{n=1}^{m}
 ( f^{n,1/4}- u_{tt}^{n,1/4}, (Q-I)(\zeta^{n+1/2} -\zeta^{n-1/2}))\nonumber\\
 &=(f^{1,1/4}-u_{tt}^{1,1/4},(Q-I)(\zeta^{3/2}-\zeta^{1/2}  ))+ \sum^m_{n=2} \nonumber  ( f^{n,1/4}- u_{tt}^{n,1/4} ,  (Q-I)( \zeta^ {n+1/2} 
-\zeta^{n-1/2} )) \nonumber \\
 &= ( f^{m,1/4}- u_{tt}^{m,1/4}, (Q-I) \zeta^ {m+1/2}) 
 - (  f^{1,1/4}-u_{tt}^{1,1/4}, (Q-I) \zeta^ {1/2} )  \nonumber \\
&\qquad \qquad+ \sum_{n=1}^{m-1} ( \int_{t_{n-1}}^{t_{n+2}}( f_t(t)-u_{ttt}(t))\dt ,
(Q-I)\zeta^{n+1/2}) . \nonumber
\end{align}
Steps similar to $I_1$ in Theorem~\ref{P1 explicit_Th2} and  $\displaystyle \sum_{n=1}^{m-1} \int_{t_{n-1}}^{t_{n+2}}\norm{q(t)}^2\dt \le3\norm{q}^2_{L^2(L^2(\Omega))}$ lead to
\begin{align}
J_1 &\le  {4C_1^2{\alpha}^{-1}h^4}\bigg( \norm{f}^2_{L^\infty(L^2(\Omega))}+\norm{u_{tt}}^2_{L^\infty(L^2(\Omega))}\bigg) +{6C_1^2{\alpha}^{-1}Th^4}\bigg(\norm{f_t}^2_{L^2(L^2(\Omega))}+\norm{u_{ttt}}^2_{L^2(L^2(\Omega))}\bigg)\nonumber\\
   & \qquad \; +\frac{\alpha}{4}\bigg(\norm{\zeta^{m+1/2}}_h^2 +\norm{\zeta^{1/2}}_h^2 \bigg)+\frac{\alpha}{4T}k\sum_{n=1}^{m-2}\norm {\zeta^{n+1/2}}^2_h.\label{J1}
\end{align}
We define now $J_2:=2k\sum_{n=1}^{m}(r^n-{\bar{\partial}_t}^2 \rho^n,\delta _t \zeta^{n} )$ and follow the similar steps used to bound $I_2$ in Theorem~\ref{P1 explicit_Th2} with the bound for $r^n$ from Lemma~\ref{r} to obtain
\begin{align}
J_2 &\le\frac{41}{630}Tk^4\norm{u_{tttt}}^2_{L^2(L^2(\Omega))}+\frac{8}{3}C_2Th^{4\gamma_0} \norm{u_{tt}}^2_{L^2({H^{2+\gamma_0}(\Omega))}}\nonumber\\ 
 &\qquad + \frac{k}{2T}\norm{ \bar{\partial}_t \zeta^{1/2}}^2+ \frac{k}{2T}\norm{ \bar{\partial}_t \zeta^{m+1/2}}^2+ \frac{k}{T}\sum_{n=1}^{m-1}\norm{\bar{\partial}_t \zeta^{n+1/2}}^2 . \label{J2}
\end{align}
Next, we observe that a combination of \eqref{J1}-\eqref{J2} in \eqref{J} and the bounds from Lemma~\ref{P1 lemma_on_norm_initial} lead to
\begin{align*}
&\norm{\bar{\partial}_t \zeta^
{m+1/2}}^2+ \norm{\zeta^{m+1/2} }_h^2  \lesssim  h^{4\gamma_0}N_{(f,u)}^2
+{k^4} M_{(u)}^2+
\frac{k}{T}  \sum_{n=0}^{m-1} \norm{\zeta^{n+1/2}}^2_h+\frac{k}{T} \sum_{n=1}^{m-1}\norm{\bar{\partial}_t \zeta^{n+1/2}}^2,  
\end{align*}
where { $N_{(f,u)}^2$} is defined in \eqref{eqn:N}. Then it is possible to apply the discrete {Gronwall} inequality from Lemma~\ref{P1 d-gronwall} to deduce the bound
\begin{equation}
\norm{\bar{\partial}_t \zeta^
{m+1/2}}^2+ \norm{\zeta^{m+1/2} }_h^2  \lesssim  h^{4\gamma_0}N_{(f,u)}^2
+{k^4} M_{(u)}^2. \label{l2cor}
\end{equation}
Finally, by virtue of triangle inequality, we can  show that 
\[\norm{\bar{\partial}_t u^{m+1/2}-\bar{\partial}_t U^{m+1/2} } + \norm{ u^{m+1/2}- U^{m+1/2} }_h  \le \| \rho^{m+1/2}  \|+ \| \bar{\partial}_t \rho^{m+1/2}  \|+\| \zeta^{m+1/2}  \|  + \| \bar{\partial}_t \zeta^{m+1/2}  \|.\] 
{The second-last} displayed estimate and Lemma \ref{P1 ritz_lemma} conclude the proof.
\end{proof}
\begin{rem}\label{newremark}
{The $L^2-$estimates $\norm{ u^{m+1}- U^{m+1} } \lesssim  h^{(2\gamma_0)} L_{(f,u)} +k^2 M_{(u)}$, $1\le m \le N-1,$ can be obtained using \eqref{l2cor} and similar arguments as in Corollary~\ref{corr}.} 
\end{rem}
%%%%%%%%%%%%%%%%%%%%%%%%%%%%%%%%%%%%%%%
\section{Numerical Experiments and Rates of Convergence}\label{P1 numeric_section}
In this section we conduct  computational tests that illustrate the convergence of the methods analyzed in the paper. All numerical routines have been realized using the open-source finite element libraries \texttt{FEniCS} \cite{AlnaesBlechta2015a} and \texttt{FreeFem++} \cite{Hecht2012}. For all the linear systems we use the direct solver MUMPS.

\subsection{Example 1: Convergence for a Smooth Solution}\label{conv}
The theoretical results of  Section~\ref{P1 fully_discrete_section} and Section~\ref{P1 implicit_direct} are validated in this section by choosing a smooth analytic solution of \eqref{P1 strong_form} and {using manufactured solutions.}  We consider the spatial domain $\Omega=(0,1)^2$ and time interval $[0,1]$. The data $f,\;u^0$ and $v^0$ are  chosen such that the exact solution is given by
\[u(x,t)=\exp(-t)(x_1(x_1-1)x_2(x_2-1))^2,\]
and hence our theoretical regularity assumptions (as well as the clamped boundary conditions) are satisfied. 

We construct a sequence of successively refined uniform triangular meshes of $\Omega$ and split the time domain {using a constant time step}. On each mesh refinement we compute errors between exact and approximate solutions, where the norms used to evaluate errors -- as well as  the penalty parameters chosen in case of dG and C$^0$IP schemes -- are specified in {Table~\ref{P1 norms}. We} also compute rates of convergence (with respect to the space discretization) according to $\texttt{Rate} =\log(\frac{e_{(\cdot)}}{\tilde{e}_{(\cdot)}})[\log(\frac{h}{\tilde{h}})]^{-1}$, where $e,\tilde{e}$ denote errors generated on two consecutive pairs of mesh size and time step ~$(h,k)$, and~$(\tilde{h},\tilde{k})$, respectively.  Table~\ref{P1 explicit_table} reports on the error decay and experimental convergence rates for the explicit leapfrog scheme. In order to adhere to the CFL condition, for each mesh refinement $\frac{k}{h^2}$ is chosen as $\frac{1}{100}$. 

\medskip 
In Table~\ref{P1 implicit_table}, we display the results of the convergence test using the implicit Newmark scheme, for which we choose $k=h$. The numerical results demonstrate the superiority of the implicit scheme over the explicit scheme--the absolute errors are smaller, CFL condition and requirement of quasi-uniformity of mesh are relaxed for this case.

\medskip \noindent 
In all cases, the  numerical results are consistent with the expected theoretical predictions in the sense that an experimental convergence order of ${\mathcal{O}}(h)$ and ${\mathcal{O}}(h^2)$ is observed for these methods for $l^{\infty}$ in time and {$\norm{\cdot}_h$ and $ \norm{\cdot}$} norms in space (as defined in Table~\ref{P1 norms}),  respectively. This is in line with the discussions in Sections~\ref{P1 fully_discrete_section} and \ref{P1 implicit_direct}. We also observe that for a given mesh resolution, the dG scheme gives the lowest $L^2-$error while the C$^0$IP scheme generates the lowest energy error. A sample of the approximate solution is plotted in Fig.~\ref{fig:conv}. 

\begin{figure}[t!]
    \centering
    \includegraphics[width=0.4\textwidth]{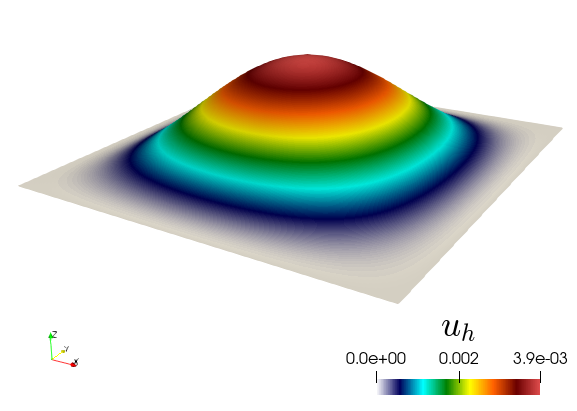}
    \caption{Example 1. Approximate solution computed with the C$^0$IP  {method} on a mesh with $h=0.031$.}
    \label{fig:conv}
\end{figure}

\begin{table}[t!]
\begin{center}
\begin{tabular} {| c |c |c|c|}\hline
        Scheme& $\underset{n \in \{{0,1,\cdots,N}\}}{\max}\|e_h^n\|$& $\underset{n \in \{{0,1,\cdots,N}\}}{\max}\|e_h^n\|_{h}$& Penalty parameters \\[0.5ex]\hline
Morley&$\underset{n \in\{ {0,1,\cdots,N}\}}{\max}\norm{u(t_n)-U^n}$&$\underset{n \in\{ {0,1,\cdots,N}\}}{\max}\trinr u(t_n)-U^n\trinr_{\text{\rm pw}}$&
No parameter\\
\hline      
  \vphantom{$\int^X_X$} dG & $\underset{n \in\{ {0,1,\cdots,N}\}}{\max}\norm{u(t_n)-U^n}$ & $\underset{n \in \{{0,1,\cdots,N}\}}{\max}\| u(t_n)-U^n\|_{\rm{dG}}$& $\sigma_{\rm dG}^1=10,\;\sigma_{\rm dG}^2=15$  \\ 
 \hline
 \vphantom{$\int^X_X$} $C^0$IP & $\underset{n \in\{ {0,1,\cdots,N}\}}{\max}\norm{u(t_n)-U^n}$ &$\underset{n \in \{{0,1,\cdots,N}\}}{\max}\| u(t_n)-U^n\|_{\rm{IP}}$&$\sigma_{\rm IP}=10$ \\ \hline
\end{tabular}
\end{center}
\vspace{-0.4cm}
\caption{Example 1. Error norms and penalty parameter values for different types of spatial discretizations.}
\label{P1 norms}
\end{table}

\begin{table}[t!]
\begin{center}
\begin{tabular} {| c c c c c c |}\hline
& $h$ & $L^2-$norm & \texttt{Rate} 
  & Energy norm & \texttt{Rate}  \\  \hline
Morley  & 0.250  & 3.01e-03 & --    & 8.75e-02 & -- \\ 
        & 0.125 & 7.95e-04 & 1.912 & 4.86e-02 & 0.793 \\
         & 0.062 & 2.12e-04 & 1.954 & 2.22e-02 & 0.898 \\
         & 0.031 & 5.33e-05 & 1.903 & 1.23e-02 & 0.931 \\
         & 0.016 & 1.41e-05 & 1.943  & 6.10e-03 & 0.965 \\ 
         \hline
 dG        & 0.250 & 2.90e-03 & -- &  1.45e-01 & -- \\
            & 0.125 & 5.36e-04 &  1.907  & 6.96e-02 & 0.782\\
           & 0.062 & 1.38e-04 & 1.958  & 4.36e-02 & 0.676\\
           & 0.031 & 3.26e-05 & 2.081  & 2.06e-02 & 1.080\\
           & 0.016 & 7.82e-06 & 2.059  & 8.51e-03 & 1.277 \\ 
           \hline
$C^0$IP    & 0.250 & 1.30e-03 & -- & 6.64e-02 & --\\
           & 0.125 & 5.12e-04 & 1.348  & 3.21e-02 & 1.051\\
           & 0.062 & 1.52e-04 & 1.749  & 1.50e-02 & 1.096\\
           & 0.031 & 4.04e-05 & 1.917  & 7.16e-03 & 1.066\\
           & 0.016 & 1.03e-05 & 1.974  & 3.49e-03 & 1.038\\ \hline 
\end{tabular}
\end{center}
\vspace{-0.4cm}
\caption{Example 1. Error history (errors in the $L^2$ and energy norms and estimated convergence rates) associated with the explicit version of the scheme for different discretizations in space.}\label{P1 explicit_table}
\end{table}

\begin{table}[t!]
\begin{center}
\begin{tabular} {|c c c c c c |}\hline
& $h$ & $L^2-$norm & \texttt{Rate} 
  & Energy norm & \texttt{Rate}  \\  \hline
Morley     & 0.250& 1.48e-03&--&4.51e-02&--\\
           & 0.125& 6.02e-04&1.301&3.76e-02&0.260\\
           & 0.062&2.00e-04&1.586&2.35e-02&0.677\\
           & 0.031&5.22e-05&1.942&1.30e-02&0.857\\ 
           & 0.016&1.32e-05&1.978&6.70e-03&0.956\\
           & 0.008&3.31e-06& 1.997&3.40e-03&0.975\\
            \hline      
 dG        & 0.250 & 8.39e-04 & --  & 8.99e-02 & --\\
           & 0.125 & 4.22e-04 & 0.992  & 6.91e-02 & 0.378\\
           & 0.062 & 1.32e-04 & 1.673  & 4.49e-02 & 0.625\\
           & 0.031 & 3.19e-05 & 2.050  & 2.20e-02 & 1.031\\
           & 0.016 & 7.79e-06 & 2.035  & 9.49e-03 & 1.211\\
           & 0.008 & 1.94e-06 & 2.006  & 4.27e-03 & 1.153\\ \hline
$C^0$IP    & 0.250 & 7.11e-04 & --  & 5.15e-02 & --\\
           & 0.125 & 4.20e-04 & 0.759  & 2.61e-02 & 0.979\\
           & 0.062 & 1.43e-04 & 1.557  & 1.39e-02 & 0.908\\
           & 0.031 & 3.94e-05 & 1.859  & 6.84e-03 & 1.025\\
           & 0.016 & 1.01e-05 & 1.967  & 3.40e-03 & 1.006\\
           & 0.008 & 2.58e-06 & 1.968  & 1.70e-03 & 1.003\\ \hline
           %&Morley \textcolor{red}{0.004}&8.26e-07&2.004&1.71e-03& 0.988\\
\end{tabular}
\end{center}
\vspace{-0.4cm}
\caption{Example 1. Error history (errors in the $L^2$ and energy norms and experimental convergence rates) associated with the implicit version of the scheme for different discretizations in space.}
\label{P1 implicit_table}
\end{table}

%%%%%%%%%%%%%%%%%%%%%%%%%%%%%%%%%%%%%%%%%%%%%%%%%%%%%%%%%%%%%%%%
\subsection{Example 2: Vibration in Heterogeneous Media}
%\label{hetrogenous}
The aim of this subsection is to illustrate that lowest-order nonstandard methods can effectively address a slightly more complex problem. For this we follow a similar approach as in \cite{MR4444402} and consider the following modified PDE 
\begin{equation*}
u_{tt}(x,t) +  \Delta (c(x) \Delta u(x,t)) = f(x,t), \quad (x,t) \in \Omega \times \left(0,T \right], %\label{P1 modifiedstrong_form}
\end{equation*}
with initial and clamped boundary conditions
\begin{align*}
\begin{cases}
\;u(x,0)=u^0(x),&\\
\hspace{0.01cm}u_t(x,0)=v^0(x) &\quad\text{in }\Omega,\\
 \; u=\frac{\partial u}{\partial n} =0, \quad\text{on }\partial \Omega \times \left(0,T \right].
  \end{cases}
\end{align*}
where the positive coefficient $c$ characterizes the rigidity of the material being considered. The setting adopted for the experiment is
\begin{equation*}
    \Omega=(-1,1)^2, \qquad T=\frac{3}{100}, \qquad c(x) = \begin{cases}
1, & \text{if } x_2 < 0.2, \\
9, & \text{if } x_2 \geq 0.2, 
\end{cases}\qquad 
f=0,
\end{equation*}
with the initial values 
\begin{equation*}
    u^0=\frac15\exp(-|10x|^2)[(1-x_1^2)^2(1-x_2^2)^2], \qquad v^0=0.
\end{equation*}

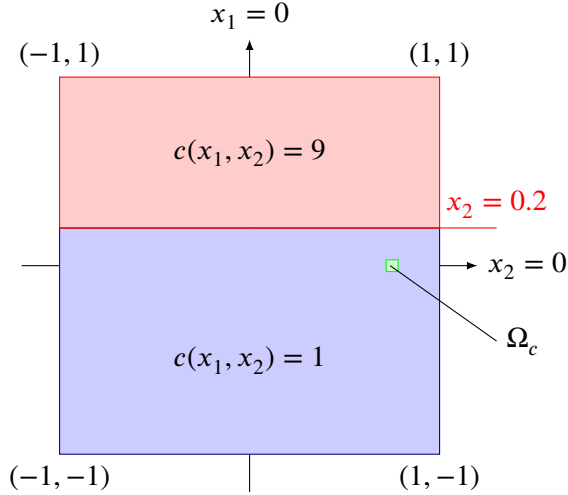
\begin{figure}[t!]
\centering
\begin{tikzpicture}[scale=2.5, >=latex]
    % Set the aspect ratio to ensure the domain appears square
    \draw[->] (-1.2,0) -- (1.2,0) node[right] {$x_2=0$};
    \draw[->] (0,-1.2) -- (0,1.2) node[above] {$x_1=0$};

    % Draw the shaded regions
    \filldraw[fill=blue!20, draw=blue!50!black] (-1,-1) rectangle (1,0.2);
    \filldraw[fill=red!20, draw=red!50!red] (-1,0.2) rectangle (1,1);
    \filldraw[fill=green!20, draw=green!50!green] (0.71875,-0.03125) rectangle (0.78125,0.03125);

    % Draw the line x2 = 0.2
    \draw[red] (-1,0.2) -- (1.3,0.2) node[above] {$x_2=0.2$};
    \draw[black] (0.74,0) -- (1.3,-0.4) node[right] {$\Omega_c$};
    % Label the regions
    \node at (0,0.6) {$c(x_1,x_2) = 9$};
    \node at (0,-0.5) {$c(x_1,x_2) = 1$};
    
    % Labels for the lines
    \node[below] at (-1,-1) {$(-1,-1)$};
    \node[below] at (1,-1) {$(1,-1)$};
    \node[above] at (-1,1) {$(-1,1)$};
    \node[above] at (1,1) {$(1,1)$};
\end{tikzpicture}
\caption{Example 2. Piecewise constant function $c(x_1,x_2)$ in the domain $\Omega = (-1,1)^2$.}
 \label{cxx}
\end{figure}
The initial value is a regularized Dirac impulse and stimulates the system's dynamics. As in the aforementioned reference, we define the control region $\Omega_c =(0.75-l_c,0.75+l_c) \times (-l_c,l_c)$ with $l_c=\frac{1}{32}$ (see the small green shaded region in Figure~\ref{cxx}) that simulates a sensor and evaluate the expression 
$u_c(t)=\int_{\Omega_c} u_{h}(x,t) \dx.$
Our focus is to examine and compare the signal arrival at the sensor position for different {grid and time step sizes} for all the three lowest-order schemes discussed in Section~\ref{conv}.  {The calculations were performed on fixed 50×50 and 100×100 grids, where each square of the grid is divided into two triangles along the diagonal from the bottom-left to the top-right in the triangulation}. The time steps we chosen as $k=\frac{T}{N}$, where $N$ denotes the number  {of} time subintervals. The numerically computed control quantity is presented in  Fig. \ref{moruct} (for Morley), Fig.~\ref{dguct} (for dG) and Fig.~\ref{c0ipuct} (for C$^0$IP). Similar graphs can be seen for different time step sizes for corresponding lowest-order schemes which shows the accuracy of schemes. For reference we also plot a warp of the domain by the scalar solution in Fig.~\ref{fig:3d}.

%%%%%%%%%%%%%%%%%%%%%%%%%%%%%%%%%%%%%%%%%%%%%%%%%%
\begin{figure}[t!]
\centering
\subfloat{ \includegraphics[width =8cm, height=6cm]{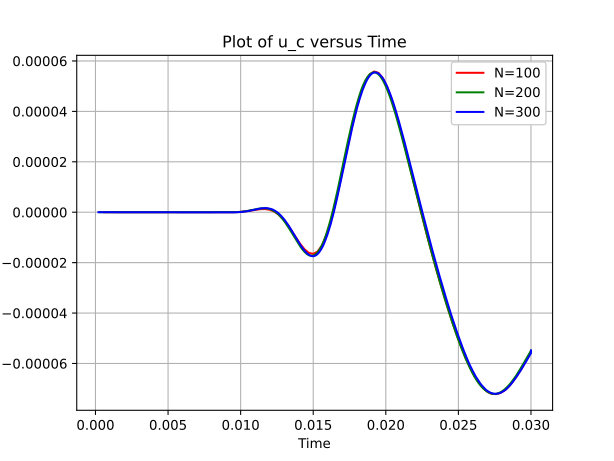}
}
\subfloat{
\includegraphics[width=8cm, height=6cm]{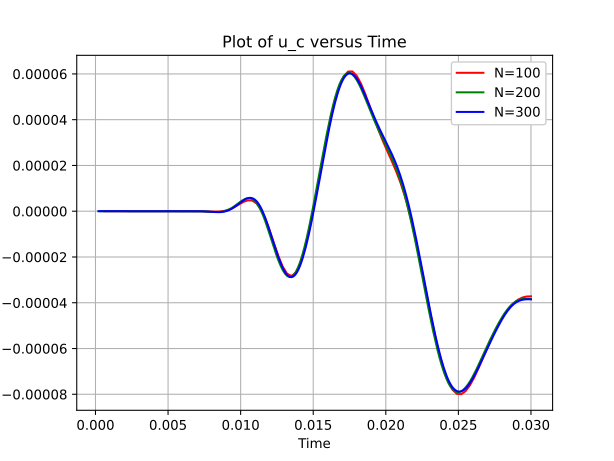}
}
\caption{Example 2. Output quantity $u_c(t)$ on a $50\times 50 $ (left) and $100\times 100$ (right) grid, computed with the Morley scheme.} \label{moruct}
\end{figure}
\begin{figure}[t!]
\centering
\subfloat{ \includegraphics[width =8cm, height=6cm]{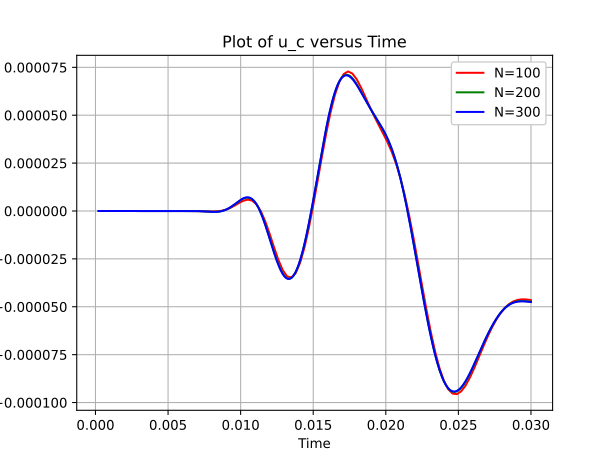}
}
\subfloat{
\includegraphics[width=8cm, height=6cm]{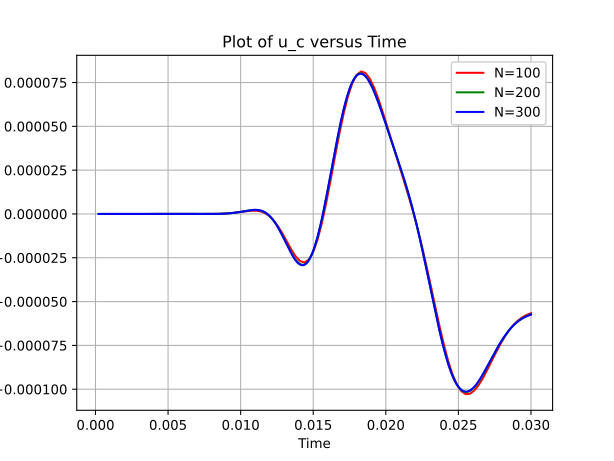}
}
\caption{Example 2. Output quantity $u_c(t)$ on a $50\times 50 $ (left) and $100\times 100$ (right) grid, computed with the dG scheme with $\sigma^1_\text{dG}=\sigma^2_\text{dG}=20$.} \label{dguct}
\end{figure}
\begin{figure}[t!]
\centering
\subfloat{ \includegraphics[width =8cm, height=6cm]{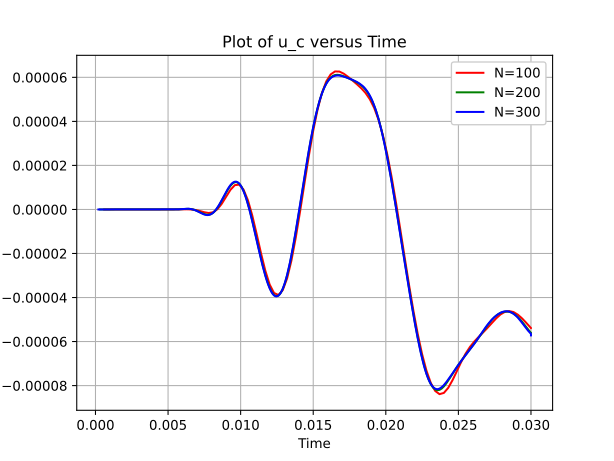}
}
\subfloat{
\includegraphics[width=8cm, height=6cm]{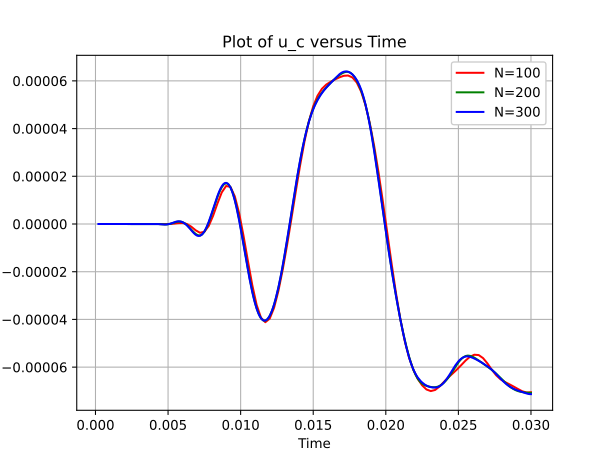}
}
\caption{Example 2. Output quantity $u_c(t)$ on a $50\times 50 $ (left) and $100\times 100$ (right) grid, computed with the C$^0$IP scheme with $\sigma_\text{IP}=10$.} \label{c0ipuct}
\end{figure}
%%%%%%%%%%%%%%%%%%%%%%%%%%%%%%%%%%%%%%%%%%%%%%%%%%%

\begin{figure}[t!]
    \centering
    \includegraphics[width=0.325\textwidth]{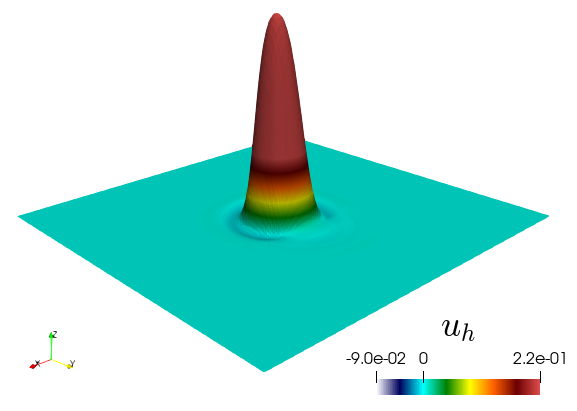}
    \includegraphics[width=0.325\textwidth]{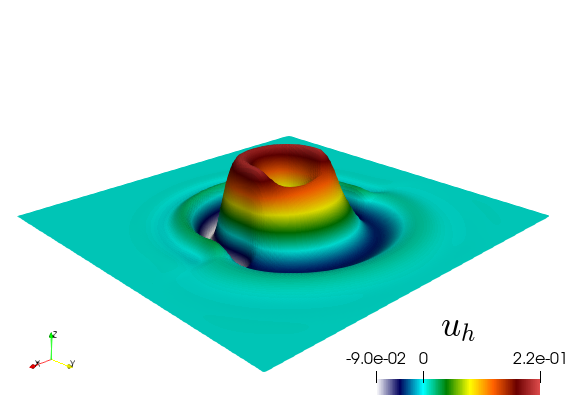}
    \includegraphics[width=0.325\textwidth]{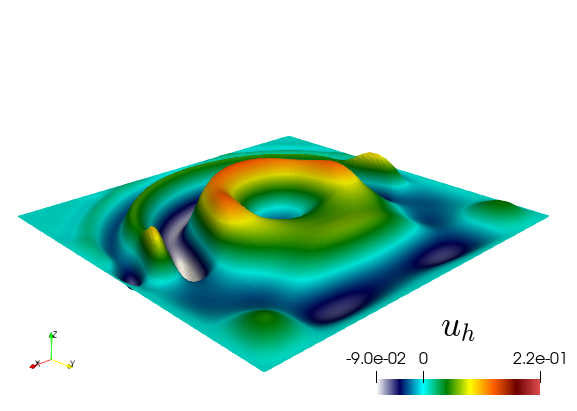}
    \caption{Example 2. Snapshots of the approximate solution computed with the dG scheme, and shown at three different time steps.}
    \label{fig:3d}
\end{figure}
\noindent

\medskip \noindent 
\textbf{Conclusions}

\medskip \noindent 
This article provides a comprehensive analysis of the biharmonic wave equation, combining nonstandard finite element methods for spatial discretization with explicit and implicit schemes for temporal discretization. A CFL condition specifically tailored to the biharmonic wave equation is derived. Though the explicit scheme for time discretization is attractive, we have found that it requires a quasi-uniform mesh, and in turn a stringent CFL condition. On the other hand, the implicit schemes discussed in this paper relax both requirements. A modified Ritz projection and a novel lifting technique provide a different perspective to the error analysis and facilitate error estimates under sharper regularity assumptions on the data in comparison to the results available in the literature.
The study demonstrates quasi-optimal convergence for semidiscrete and fully-discrete schemes, and this provides a robust foundation for problems for future study; for example,  nonlinear plate problems \cite{MR2643040}, coupled multiphysics (poroelastic \cite{MR3575720}, thermoelastic \cite{MR1458525},  thermoelastic-diffusion \cite{MR3362761})   thin plate models. We also mention that a posteriori error analysis and adaptivity for this family of schemes is a direction we will explore in future work. \\
\textbf{Funding:} This research has been partially supported by the Australian Research Council through the Discovery
Project grant DP220103160 and the Future Fellowship grant FT220100496.
%\bibliographystyle{siam}
%\bibliography{arxiv}
\appendix
\section{Appendix}
\subsection{Error Analysis of Explicit Scheme Using Semidiscrete Error Bounds}\label{p1 explicit_fully_semi_section}
In this subsection error estimates for the explicit scheme are obtained by using semidiscrete estimates rather than a direct approach used in {Sub}section~\ref{Explicit_direct_section}. %\\\noindent
We split the error as 
\begin{equation*}
    u(t_n)-U^n =(u(t_n)-{\cal R}_h u(t_n) )+({\cal R}_h u(t_n)-u_h(t_n))+(u_h(t_n)-U^n):= \rho^n+\theta ^n+ \chi^n .%\; \label{P1 explicit_semi-full_split}
    \end{equation*}
    Recall  ${\widetilde \tau}^n$ is defined in Lemma~\ref{tau}. A combination of \eqref{P1 semi_formulation} and \eqref{P1 explicit_fully_discrete} yields the error equation in $\chi^n$ as
    \begin{equation} (\partial^2_t \chi^n,v_h)+a_h (\chi^n,v_h)=({\widetilde \tau}^n,v_h )\text{ for all }v_h \in V_h. \label{P1 explicit_semi_full_error_eqn}
    \end{equation}
Recall that $\phi^{1/2}=\frac{1}{2}(\phi^0+\phi^1)$. The next lemma establishes the bounds on initial error $\chi^{1/2}:=u_h^{1/2}-U^{1/2} $. 

\begin{lemma}[Initial Error Bounds] \label{P1 semi_fully_lemma_on_norm_initial}
{The initial error $\chi^{1/2}:=u_h^{1/2}-U^{1/2} $ satisfies}
\begin{align*}
    \norm{\bar{\partial}_t \chi^{1/2}}^2 +\norm{ \chi^{1/2}}_h^2
    \lesssim
    {k^4}\norm{u_{httt}}^2_{L^ {\infty}(L^2(\Omega)) }+h^{4\gamma_0}\norm{v^0}^2_{{H^{2+\gamma_0}(\Omega)}},
\end{align*}
where the constant  in ``$\lesssim$'' depends on  $\alpha$ from \eqref{P1 a_h_properties} and $C_2$ from \eqref{P1 ritz_lemma}.
\end{lemma}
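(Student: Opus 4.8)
The plan is to mirror the proof of Lemma~\ref{P1 lemma_on_norm_initial}, but now measuring the gap between the semidiscrete and fully-discrete initial layers rather than between the exact and fully-discrete ones. First I would record the fundamental simplification: since the semidiscrete initialization gives $u_h(0) = \mathcal{R}_h u^0 = U^0$, we have $\chi^0 = 0$, so $2k^{-1}\chi^{1/2} = k^{-1}\chi^1 = \bar{\partial}_t\chi^{1/2}$, equivalently $\chi^{1/2} = \tfrac{k}{2}\bar{\partial}_t\chi^{1/2}$. This identity is what lets the initial layer be controlled with a single test.

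Next I would assemble the error equation at the half step. Averaging the semidiscrete formulation \eqref{P1 semi_formulation} at $t_0$ and $t_1$ gives $(u_{htt}^{1/2}, v_h) + a_h(u_h^{1/2}, v_h) = (f^{1/2}, v_h)$. Substituting $u_{htt}^{1/2} = 2k^{-1}(\bar{\partial}_t u_h^{1/2} - u_{ht}^0) - \widetilde{R}^0$ from Lemma~\ref{R0}, together with $u_{ht}^0 = \mathcal{R}_h v^0$, and then subtracting the defining relation \eqref{P1 explicit_ic_1} for $U^1$, I expect to obtain
\begin{equation*}
2k^{-1}(\bar{\partial}_t\chi^{1/2}, v_h) + a_h(\chi^{1/2}, v_h) = 2k^{-1}(\mathcal{R}_h v^0 - v^0, v_h) + (\widetilde{R}^0, v_h).
\end{equation*}
Here the term $\mathcal{R}_h v^0 - v^0$ is the genuinely new ingredient compared with Lemma~\ref{P1 lemma_on_norm_initial}: it appears because the fully-discrete scheme is initialized with the exact velocity $v^0$ while the semidiscrete solution uses its Ritz projection.

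I would then test with $v_h = \chi^{1/2}$. Using $2k^{-1}(\bar{\partial}_t\chi^{1/2}, \chi^{1/2}) = \|\bar{\partial}_t\chi^{1/2}\|^2$ (a consequence of the identity above) and ellipticity \eqref{P1 a_h_properties} for the bilinear term, I get $\|\bar{\partial}_t\chi^{1/2}\|^2 + \alpha\|\chi^{1/2}\|_h^2$ on the left. On the right I would again replace $\chi^{1/2}$ by $\tfrac{k}{2}\bar{\partial}_t\chi^{1/2}$ so that both terms carry the factor $\bar{\partial}_t\chi^{1/2}$: the velocity term becomes $(\mathcal{R}_h v^0 - v^0, \bar{\partial}_t\chi^{1/2})$, in which the $k^{-1}$ cancels, and the truncation term becomes $\tfrac{k}{2}(\widetilde{R}^0, \bar{\partial}_t\chi^{1/2})$. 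Cauchy--Schwarz followed by Young's inequality (with $\epsilon = 2$) then absorbs $\tfrac12\|\bar{\partial}_t\chi^{1/2}\|^2$ into the left-hand side, leaving $\|v^0 - \mathcal{R}_h v^0\|^2 + \tfrac{k^2}{4}\|\widetilde{R}^0\|^2$ on the right. Applying the Ritz bound \eqref{P1 norm_ritz} to the first term and the truncation bound $\|\widetilde{R}^0\|^2 \le \tfrac94 k^2\|u_{httt}\|_{L^\infty(L^2(\Omega))}^2$ from Lemma~\ref{R0} to the second yields exactly the claimed estimate.

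The step I would watch most carefully is the bookkeeping of the $k^{-1}$ prefactors in the initial layer. Because the scheme \eqref{P1 explicit_ic_1} is scaled by $2k^{-1}$, the velocity error $\mathcal{R}_h v^0 - v^0$ enters multiplied by $k^{-1}$, and it is only the cancellation via $\chi^{1/2} = \tfrac{k}{2}\bar{\partial}_t\chi^{1/2}$ that turns this into a clean, $k$-independent $\mathcal{O}(h^{4\gamma_0})$ contribution rather than one degraded by a negative power of $k$. Getting this cancellation exactly right is what separates the desired optimal bound from a spuriously singular one.
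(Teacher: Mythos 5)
Your proposal is correct and follows essentially the same route as the paper: the identity $\chi^0=0$, hence $2k^{-1}\chi^{1/2}=\bar{\partial}_t\chi^{1/2}$, the error equation $2k^{-1}(\bar{\partial}_t\chi^{1/2},v_h)+a_h(\chi^{1/2},v_h)=(\widetilde{R}^0,v_h)+2k^{-1}(\mathcal{R}_hv^0-v^0,v_h)$, the test function $v_h=\chi^{1/2}$, ellipticity, Cauchy--Schwarz and Young, and finally Lemma~\ref{R0} and Lemma~\ref{P1 ritz_lemma} are exactly the ingredients of the paper's argument. The only differences are cosmetic (how the error equation is assembled and the precise choice of Young parameters), so there is nothing to add.
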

\begin{proof}[\textbf{Proof}]
For any $v_h \in V_h$, the formulations \eqref{P1 semi_formulation} and \eqref{P1 explicit_ic_1} show 
\begin{align}
 2k^{-1} (\bar{\partial}_t \chi^{1/2},v_h)+ a_h( \chi^{1/2},v_h)
 %&=2k^{-1}\left(\partial_tu_h^{1/2} - \bar{\partial}_t U^{1/2},v_h\right) +a_h\left( u_h^{1/2}-U^{1/2},v_h\right) \\
 &=2k^{-1} (\bar{\partial}_t u_h^{1/2},v_h)+ a_h( u_h^{1/2},v_h)-2k^{-1} (\bar{\partial}_t U^{1/2},v_h)- a_h( U^{1/2},v_h)
\nonumber \\&= 2k^{-1}(\bar{\partial}_t u_h^{1/2},v_h)+a_h(u_h^{1/2},v_h)-(  f^{1/2} +2k^{-1}v^0,v_h  ) \nonumber \\
 &=
({\widetilde R}^0,v_h)+2k^{-1}(\rho_t^0,v_h) \label{chi_0}
\end{align}
with ${\widetilde R}^0:=2k^{-1}(\bar{\partial}^2_t u_h^{1/2}-u_{ht}^0) -u_{htt}^{1/2}$ from Lemma~\ref{R0} and $\rho_t^0=u_{ht}^0-v^0={\cal R}_hv^0-v^0$.\\
{Choose $v_h= \chi^{1/2}$} in \eqref{chi_0} and utilize $ 2k^{-1}\chi^{1/2}=\bar{\partial}_t\chi^{1/2}$ (since $\chi^0={\cal{R}}_h u^0-U^0=0$) to obtain
\begin{align}
(\bar{\partial}_t \chi^{1/2},\bar{\partial}_t\chi^{1/2} )+ a_h( \chi^{1/2},\chi^{1/2})=
\frac{1}{2}k({\widetilde R}^0,\bar{\partial}_t \chi^{1/2})+(\rho_t^0,\bar{\partial}_t\chi^{1/2}) .\nonumber
\end{align}
The ellipticity of $a_h(\cdot,\cdot)$ from \eqref{P1 a_h_properties} and Cauchy--Schwarz  inequality reveal
\begin{align}
\norm{\bar{\partial} _t \chi^{1/2}}^2+\alpha \norm{\chi^{1/2}}_h^2 &\le \frac{1}{2}k \norm{{\widetilde R}^0}\norm{\bar{\partial}_t \chi^{1/2}}+\norm{\rho_t^0}\norm{\bar{\partial}_t\chi^{1/2}}. \label{Ellip}
\end{align}
The Young's inequality (applied twice) with $a=k\norm{{\widetilde R}^0}$ (resp. $a=\norm{\rho_t^0}$), $b=\norm{\bar{\partial}_t \chi^{1/2}}$ (resp. $b=\norm{\bar{\partial}_t \chi^{1/2}}$), $\epsilon=1$ (resp. $\epsilon=2$) for first (resp. second) term on the right-hand side of \eqref{Ellip} show
\begin{align}
     \frac{1}{2}k \norm{{\widetilde R}^0}\norm{\bar{\partial}_t \chi^{1/2}}+\norm{\rho_t^0}\norm{\bar{\partial}_t\chi^{1/2}} \le \frac{1}{4}k^2\norm{{\widetilde R}^0}^2+\norm{\rho_t^0}^2+\frac{1}{2}\norm{\bar{\partial}_t \chi^{1/2}}^2. \label{youn}
\end{align}
A combination of \eqref{Ellip} and \eqref{youn} with bounds for ${\widetilde R}^0$ from Lemma~\ref{R0} and $\rho_t^0$ from Lemma~\ref{P1 ritz_lemma} conclude the proof.
\end{proof}
\noindent
 The next theorem gives the error bounds under the CFL condition on mesh ratio discussed in  Theorem~\ref{P1 explicit_stability}.  
 
 \begin{thm}[Error Estimates]\label{Th using semi} {Consider a quasi-uniform {triangulation ${\mathcal{T}}$} of $\bar{\Omega}$ and assume that the regularity results in Lemma \ref{P1 regularity_lemma} and CFL condition $k \le {\beta^{-1/2}C^{-1}_{\text{\rm inv}}} h^2$, hold.} {Then for $1 \le m\le N-1$,}
 \begin{align*} \norm{\bar{\partial}_t u^{m+1/2}-\bar{\partial}_t U^{m+1/2} } + \norm{ u^{m+1/2}- U^{m+1/2} }_h \lesssim  h^{\gamma_0} L_{(f,u)} +k^2 M_{(u_h)}, 
\end{align*}
where $L_{(f,u)}$ is given in \eqref{L},  
$M_{(u_h)}:=  \norm{u_{httt}}_{L^\infty {(L^2(\Omega))}} +\norm{u_{htttt}}_{L^2{(L^2(\Omega))}},$ and the  constant absorbed in "$\lesssim$'' depends on $\alpha$, $\beta$ from \eqref{P1 a_h_properties},  $T$,  and $C_1$, $C_2$ from Lemmas~\ref{bhcompanion_lem}-\ref{P1 ritz_lemma}.
    \end{thm}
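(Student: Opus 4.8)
The plan is to exploit the three-term splitting $u(t_n)-U^n=\rho^n+\theta^n+\chi^n$ already introduced, so that the Ritz-projection error $\rho^n$ is controlled by Lemma~\ref{P1 ritz_lemma}, the genuinely semidiscrete error $\theta^n=\mathcal{R}_hu(t_n)-u_h(t_n)$ is absorbed into the bound of Theorem~\ref{P1 semi_error_estimates}, and the only remaining work is to estimate the fully-discrete defect $\chi^n=u_h(t_n)-U^n$. The advantage of this route is that $\chi^n$ satisfies the clean error equation \eqref{P1 explicit_semi_full_error_eqn}, whose right-hand side contains solely the truncation term $(\widetilde{\tau}^n,v_h)$; no smoother contribution $(Q-I)v_h$ survives, in contrast with the direct argument of Theorem~\ref{P1 explicit_Th2}.

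First I would test \eqref{P1 explicit_semi_full_error_eqn} with $v_h=2k\,\delta_t\chi^n$ and reuse the algebraic identities and the CFL/inverse-inequality absorption developed in the stability proof, namely the passage \eqref{stabilty_ist}--\eqref{stab ist} of Theorem~\ref{P1 explicit_stability}. Summing from $n=1$ to $m$ telescopes the left-hand side and, under the CFL condition $k\le\beta^{-1/2}C_{\mathrm{inv}}^{-1}h^2$ together with Lemma~\ref{dinv}, yields the coercive lower bound $\tfrac34\norm{\bar{\partial}_t\chi^{m+1/2}}^2+\alpha\norm{\chi^{m+1/2}}_h^2$. The right-hand side becomes $2k\sum_{n=1}^m(\widetilde{\tau}^n,\delta_t\chi^n)$, which I would handle exactly as the term $I_2$ in Theorem~\ref{P1 explicit_Th2}: write $2\delta_t\chi^n=\bar{\partial}_t\chi^{n+1/2}+\bar{\partial}_t\chi^{n-1/2}$, apply Cauchy--Schwarz and Young's inequality, and invoke the truncation bound $\norm{\widetilde{\tau}^n}^2\le\tfrac{1}{126}k^3\int_{t_{n-1}}^{t_{n+1}}\norm{u_{htttt}(s)}^2\ds$ from Lemma~\ref{tau}. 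Feeding in the initial estimate from Lemma~\ref{P1 semi_fully_lemma_on_norm_initial} and applying the discrete Gr\"onwall inequality (Lemma~\ref{P1 d-gronwall}) should close the recursion and deliver $\norm{\bar{\partial}_t\chi^{m+1/2}}^2+\norm{\chi^{m+1/2}}_h^2\lesssim h^{4\gamma_0}\norm{v^0}_{H^{2+\gamma_0}(\Omega)}^2+k^4M_{(u_h)}^2$.

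It then remains to reassemble the total error by the triangle inequality, bounding $\norm{\bar{\partial}_t u^{m+1/2}-\bar{\partial}_t U^{m+1/2}}+\norm{u^{m+1/2}-U^{m+1/2}}_h$ through the $\rho+\theta$ and $\chi$ contributions. The $\chi$-part is supplied by the previous step (with $h^{4\gamma_0}\norm{v^0}^2$ reabsorbed into $h^{\gamma_0}L_{(f,u)}$ since $v^0=u_t(0)$). For the $\rho+\theta$ part I would observe that $\rho^n+\theta^n=u(t_n)-u_h(t_n)$ is precisely the semidiscrete error: its half-point energy value is bounded by averaging two instances of Theorem~\ref{P1 semi_error_estimates}, while its discrete time derivative satisfies $\bar{\partial}_t(\rho^{m+1/2}+\theta^{m+1/2})=k^{-1}\int_{t_m}^{t_{m+1}}(u_t-u_{ht})(s)\ds$, whose $L^2$-norm is dominated by $\norm{u_t-u_{ht}}_{L^\infty(L^2(\Omega))}$. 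Both are of order $h^{\gamma_0}L_{(f,u)}$ directly from Theorem~\ref{P1 semi_error_estimates}, which yields the stated bound.

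The main obstacle is structural rather than analytical: the truncation errors $\widetilde{\tau}^n$ and $\widetilde{R}^0$ that drive the $\chi$-estimate are expressed through the \emph{discrete} solution's time derivatives $u_{httt}$ and $u_{htttt}$, which is exactly why the bound carries $M_{(u_h)}$ and not $M_{(u)}$. As flagged in Remark~\ref{sem}, it is unclear whether $\norm{u_{httt}}_{L^2(L^2(\Omega))}$ and $\norm{u_{htttt}}_{L^2(L^2(\Omega))}$ can be re-expressed purely through the regularity of the continuous data assumed in Lemma~\ref{P1 regularity_lemma}; this is precisely the defect that the direct proof of Theorem~\ref{P1 explicit_Th2} sidesteps. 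On the technical level, the delicate point is the absorption of the sign-indefinite term $-\tfrac14k^2a_h(\bar{\partial}_t\chi^{m+1/2},\bar{\partial}_t\chi^{m+1/2})$, for which the CFL condition and the quasi-uniformity-dependent inverse inequality of Lemma~\ref{dinv} are indispensable.
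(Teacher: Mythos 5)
Your proposal follows essentially the same route as the paper's own proof: the three-term splitting $\rho^n+\theta^n+\chi^n$, testing the $\chi$-equation \eqref{P1 explicit_semi_full_error_eqn} with $2k\,\delta_t\chi^n$, reusing the stability/CFL absorption of Theorem~\ref{P1 explicit_stability}, invoking Lemmas~\ref{tau} and~\ref{P1 semi_fully_lemma_on_norm_initial}, discrete Gr\"onwall, and finally the triangle inequality with Theorem~\ref{P1 semi_error_estimates} and Lemma~\ref{P1 ritz_lemma}. The argument is correct, and your closing observation about the unavoidable appearance of $M_{(u_h)}$ matches the caveat recorded in Remark~\ref{sem}.
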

    \begin{proof}[Proof]
     Choose $v_h= 2k \delta_t \chi ^n=2(\chi^{n+1/2}-\chi^{n-1/2})=k(\bar{\partial}_t \chi^{n+1/2}+\bar{\partial}_t \chi^{n-1/2})$ in \eqref{P1 explicit_semi_full_error_eqn} and repeat the arguments in Theorem~\ref{P1 explicit_stability} with $U^n$ replaced by  $\chi^n$ and $f^n$  by ${\widetilde{\tau}}^n$ to arrive at 
 \begin{equation*} \norm{\bar{\partial}_t \chi^{m+1/2}}^2+ \norm{\chi^{m+1/2}}_h^2\lesssim \norm{\bar{\partial}_t \chi^{1/2}}^2+\norm{\chi^{1/2}}_h^2+ Tk\sum_{n=1}^m\norm{{\widetilde \tau}^n}^2 +k/T \sum_{n=0}^{m-1} \norm{\bar{\partial}_t \chi^{n+1/2}}^2.
\end{equation*} 
 The first two terms on the right-hand side of the above expression are bounded using Lemma \ref{P1 semi_fully_lemma_on_norm_initial} and the truncation error $\widetilde{\tau}^n$ is bounded using  Lemma~\ref{tau}. Then we apply the discrete {Gronwall} Lemma~\ref{P1 d-gronwall} to deduce
\begin{equation*} \norm{\bar{\partial}_t \chi^{m+1/2}}^2+ \norm{\chi^{m+1/2}}_h^2\lesssim k^4 \norm{u_{httt}}^2_{L^\infty(L^2(\Omega))}+ k^4 \norm{u_{htttt}}^2_{L^2(L^2(\Omega))}+h^{4\gamma_0}\norm{v^0}^2_{H^{2+\gamma_0}(\Omega)}.
    \end{equation*}Then,  triangle inequality readily shows that  
    \begin{align*}
    \norm{\bar{\partial}_t u^{m+1/2}-\bar{\partial}_t U^{m+1/2} } + \norm{ u^{m+1/2}- U^{m+1/2} }_h  &\le \| \rho^{m+1/2}  \| + \| \theta ^{m+1/2}\| \\
    &\quad+ \|\chi^{m+1/2} \| + \| \bar{\partial}_t \rho^{m+1/2}  \| + \| \bar{\partial}_t \theta ^{m+1/2}\| +   \|\bar{\partial}_t \chi^{m+1/2} \|.
     \end{align*}
Therefore the proof is completed after using the last displayed estimate,  Theorem \ref{P1 semi_error_estimates}, and Lemma \ref{P1 ritz_lemma}.
\end{proof}
\subsection{Regularity}
This subsection is devoted to the proof of Lemma \ref{P1 regularity_lemma} {following the approach from \cite[Theorem 12.3]{chen}.} 
First, we recall that  $\left\{\psi_n \right\}_{n=1}^\infty$ is an  orthonormal basis of $L^2(\Omega)$, and hence (cf. \cite[Chapter 1, Theorem 4.13]{conway}) 
\begin{equation}
   u(x,t) = \sum_{n=1}^{\infty} d_n(t)\psi_n\text{ and  }  u_{tt}(x,t)= {\sum_{n=1}^{\infty} {d_{ntt}}\psi_n}, \label{P1 representation}
\end{equation}
where $d_n(t):= ( u(t), \psi_n)$ and ${d_{ntt}(t)}=( u_{tt}(t), \psi_n)$. A combination of \eqref{P1 strong_form} and \eqref{P1 representation} and the fact that the $\psi_n$'s are smooth functions satisfying \eqref{P1 Ev} reveal that for every $j \ge 1$,
\begin{equation*}
     (f(t), \psi_j)=\sum_{n=1}^\infty \left({d_{ntt}(t)}\psi_n , \psi_j \right)+  \sum_{n=1}^\infty \left( \Delta^2  d_n(t)\psi_n , \psi_j \right)=\sum_{n=1}^\infty {d_{ntt}(t)}\left(\psi_n , \psi_j \right)+ \sum_{n=1}^\infty  d_n(t)\left( \lambda_n \psi_n , \psi_j \right).
\end{equation*}
The orthonormality of the $\psi_n$'s simplifies the above equation to a second-order linear {ODE} 
$${d_{jtt}}+\lambda_j d_j=f_j$$
with 
\begin{align*}
   & 
   d_j(0)=(u(0),\psi_j)=(u^0,\psi_j),\;  
   %&\
   {d_{jt}(0)}=(u_t(0),\psi_j)=(v^0,\psi_j), \;\text{and }  f_j(t)=(f(t), \psi_j),
\end{align*}
for all $t\in[0,T]$.
For $n \ge 1$, the solution of this ODE is 
\begin{equation}
    d_n(t)=(u^0, \psi_n){\cos (\sqrt{\lambda_n}t)}+ \lambda_n^{-\frac{1}{2}}{(v^0, \psi_n)}{\sin (\sqrt{\lambda_n} t )}+ \lambda_n^{-\frac{1}{2}}I(t), \label{P1 dj}
\end{equation}
\noindent 
where
$ \displaystyle
 I(t):=\int_0 ^ t {\sin(\sqrt{\lambda_n}(t-s))} f_n(s) \ds .
$
A successive differentiation with respect to $t$ shows that 
\begin{align*}
    {I_t(t)}&=\sqrt{\lambda_n}\int_0 ^ t {\cos (\sqrt{\lambda_n}(t-s))} f_n(s) \ds \text{ and }
    {I_{tt}(t)}=\sqrt \lambda_n f_{n}(t)-{\lambda_n}\int_0 ^ t {\sin( \sqrt{\lambda_n}(t-s) )}f_n(s)\ds.
\end{align*}
Then we can apply integration by parts thrice to the term {$ I_{tt}(t)$   to observe} 
\begin{subequations}\label{app:i2}
\begin{align}
{I_{tt}(t)}&=\sqrt \lambda_n f_n(0){\cos(\sqrt \lambda_n t)}+\sqrt \lambda_n\int_0^t {f_{ns}(s)}{\cos(\sqrt \lambda_n (t -s))}\ds, \label{I'' 1}\\
    {I_{tt}}(t)&=\sqrt{\lambda_n}f_n(0){\cos (\sqrt{\lambda_n}t)}+{f_{nt}(0)}{\sin (\sqrt{\lambda_n}t)}+\int_0^t {f_{nss}(s)}{\sin (\sqrt{\lambda_n}(t-s))}\ds,\\
    {I_{tt}(t)} &=\sqrt{\lambda_n}f_n(0){\cos (\sqrt{\lambda_n}t)}+{f_{nt}(0)} {\sin (\sqrt{\lambda_n}t )}-{\lambda_n^{-\frac{1}{2}}}{f_{ntt}(0)}{\cos( \sqrt{\lambda_n}t)} \nonumber\\
    & \qquad +\lambda_n^{-\frac{1}{2}}{f_{ntt}(t)}-\lambda_n^{-\frac{1}{2}} \int_0 ^ t {\cos (\sqrt{\lambda_n}(t-s) )}{f_{nsss}(s)} \ds.\label{P1 I''}
\end{align}\end{subequations}
{The expressions in \eqref{app:i2} are utilized appropriately to control $u_{tt}$ in the $L^2(\Omega)$ and $H^{2+\gamma_0}(\Omega)$ norms. In addition, the integration by parts is aimed at reducing} the spatial regularity of $f$ and its time derivatives.
Next, we proceed to 
differentiate \eqref{P1 dj} twice and use \eqref{P1 norm_DA} (with $D(A^0)=L^2(\Omega)$), which leads to  
\begin{align}
\norm{u_{tt}}^2 = \sum_{n=1}^ \infty|(u_{tt},\psi_n)|^2& =\sum_{n=1}^ \infty|{d_{ntt}}|^2 \nonumber\\
&
=\sum_{n=1}^ \infty | -\lambda_n (u^0, \psi_n){\cos (\sqrt{\lambda_n}t)}-  \sqrt{\lambda_n}{(v^0, \psi_n)}{\sin (\sqrt{\lambda_n} t )}+{{\lambda_n}}^{-\frac{1}{2}}{I_{tt}(t)}|^2.\label{l2utt}
\end{align}
A combination of  this  with \eqref{I'' 1} and a use of the definitions of $f_n, {f_{ns}}$ with elementary manipulations show
\begin{align}
\norm{u_{tt}}^2&\lesssim \sum_{n=1}^ \infty \left( |\lambda_n (u^0, \psi_n)|^2+|\sqrt{\lambda_n }(v^0, \psi_n)  |^2 +| f_n(0){\cos (\sqrt{\lambda_n}t)}+\int_0 ^ t {f_{ns}(s)} {\cos( \sqrt{\lambda_n}(t-s) )} \ds|^2 \right) \nonumber\\
&\lesssim \sum_{n=1}^ \infty \left( |\lambda_n (u^0, \psi_n) |^2+|\sqrt{\lambda_n }(v^0, \psi_n)  |^2 +\left| (f(0),\psi_n)\right|^2+\int_0 ^ t |({f_{s}(s)},\psi_{n})|^2 \ds \right). \label{utt}
\end{align}
Since $f_t \in L^2(\Omega)$, an application of the monotone convergence theorem and   \eqref{P1 norm_DA} shows 
\begin{equation}    \norm{u_{tt}}^2_{L^\infty(L^2(\Omega))}\lesssim \left( \norm{u^0}^2_{D(A)}+\norm{v^0}^2_{D(A^{1/2})}+\norm{f(0)}^2+\norm{{f_t}}^2_{L^2(L^2(\Omega))}\right) \label{l2utt norm}.
\end{equation}
Then we differentiate  \eqref{P1 dj} thrice (resp. four times) to obtain 
\begin{align*}
    {d_{nttt}(t)}&=\lambda_n^{\frac{3}{2}}(u^0,\psi_n){\sin(\sqrt{\lambda_n}t)}-\lambda_n (v^0,\psi_n) {\cos (\sqrt{\lambda_n}t)}+{{\lambda_n}}^{-\frac{1}{2}}{I_{ttt}(t)}\\
\big(\text{resp. } {d_{ntttt}(t)}&=\lambda_n^{2}(u^0,\psi_n){\cos (\sqrt{\lambda_n}t)}+\lambda_n^{\frac{3}{2}} (v^0,\psi_n){\sin(\sqrt{\lambda_n}t)}+{{\lambda_n}}^{-\frac{1}{2}}{I_{tttt}(t)}\big).
\end{align*}
{An integration} by parts twice (resp. thrice) to the last term ${I_{ttt}(t)}$ (resp. ${I_{tttt}(t)}$) leads to
\begin{align*}
{d_{nttt}(t)}&=\lambda_n^{\frac{3}{2}}(u^0,\psi_n){\sin(\sqrt{\lambda_n}t)}-\lambda_n (v^0,\psi_n) {\cos (\sqrt{\lambda_n}t)}-\sqrt{\lambda_n}{\sin( \sqrt{\lambda_n}t)} f_{n}(0)\\
&\quad +{\cos (\sqrt{\lambda_n}t)} {f_{nt}(0)}+\int_0^t {\cos (\sqrt{\lambda_n}(t-s))}{f_{nss}(s)}\ds\\
\big(\text{resp. } {d_{ntttt}(t)}&=\lambda_n^{2}(u^0,\psi_n){\cos (\sqrt{\lambda_n}t)}+\lambda_n^{\frac{3}{2}} (v^0,\psi_n){\sin(\sqrt{\lambda_n}t)}-\lambda_n{\cos (\sqrt{\lambda_n}t)}f_n(0)\\
&\quad -\sqrt{\lambda_n}{\sin(\sqrt{\lambda_n}t)}{f_{nt}(0)}+{\cos (\sqrt{\lambda_n})}{f_{ntt}(0)}+\int_0^t {\cos (\sqrt{\lambda_n}(t-s))}{f_{nsss}(s)}\ds\big).
\end{align*}
The fact that $\norm{u_{ttt}}^2=\sum_{n=1}^ \infty|{d_{nttt}(t)}|^2$ (resp. $\norm{u_{tttt}}^2=\sum_{n=1}^ \infty|{d_{ntttt}(t)}|^2$) from \eqref{P1 norm_DA} and an approach similar to \eqref{l2utt}-\eqref{l2utt norm} leads to 
\begin{align*}
\norm{u_{ttt}}^2_{L^\infty(L^2(\Omega))}&\lesssim \norm{u^0}^2_{D(A^{3/2})}+\norm{v^0}^2_{D(A)}+\norm{f(0)}^2_{D(A^{1/2})}+\norm{{f_t(0)}}^2 +\norm{{f_{tt}}}^2_{L^2 (L^2(\Omega))},\\ \norm{u_{tttt}}^2_{L^\infty(L^2(\Omega))} &\lesssim \norm{u^0}^2_{D(A^2)}+\norm{v^0}^2_{D(A^{3/2})}+\norm{f(0)}^2_{D(A)}+\norm{{f_t(0)}}_{D(A^{1/2})}^2+\norm{{f_{tt}(0)}}^2+\norm{{f_{ttt}}}^2_{L^2 (L^2(\Omega))}.
\end{align*}
Now we aim to control $\|u_{tt}\|_{D(A)}$. A differentiation of \eqref{P1 dj} twice and substitution of ${d_{ntt}}$ in \eqref{P1 representation} shows
\begin{align}  \norm{u_{tt}}^2_{D(A)}=\sum_{n=1}^ \infty|\lambda_n {d_{ntt}(t)}|^2=\sum_{n=1}^ \infty \left| -\lambda_n ^2(u^0, \psi_n){\cos( \sqrt{\lambda_n}t)}-  \lambda_n^{\frac{3}{2}}{(v^0, \psi_n)}{\sin( \sqrt{\lambda_n} t)} +{\lambda_n^{\frac{1}{2}}} {I_{tt}(t)}\right|^2 .\label{uth33}
    \end{align} 
We can then argue as before by using \eqref{P1 I''} to conclude that, for any $t \in [0,T],$
\begin{align*}
\norm{u_{tt}}^2_{L^\infty({H^{2+ \gamma_0}(\Omega))}}&\lesssim  \sum_{n=1}^ \infty \left(  |\lambda_n ^2(u^0, \psi_n) |^2 + \left|   \lambda_n^{\frac{3}{2}}{(v^0, \psi_n)}  \right|^2 +|\lambda_nf_n(0) |^2 +|\sqrt{\lambda_n} {f_{nt}(0)}|^2 \right.\\
 &\left. \qquad \qquad +|{{f_{ntt}(0)}}|^2+|{f_{ntt}(t)}|^2+ \int_0 ^ t  |{f_{nsss}(s)} |^2
    \ds  \right).
\end{align*}
As $f,\;{f_t,\;f_{tt}, \text{ and } f_{ttt}} \in L^2(L^2(\Omega))$, from the Sobolev embedding we can infer that $ f,\;{f_t,\text{ and } f_{tt}}\in L^\infty(L^2(\Omega))$. 
Consequently, \eqref{P1 norm_DA} and the fact that $D(A) \subset H^{2+\gamma_0}(\Omega)$ (i.e., $\norm{u_{tt}}^2_{H^{2+ \gamma_0}(\Omega)}{\lesssim} \norm{u_{tt}}^2_{D(A)})$ show that  $\norm{u_{tt}}^2_{H^{2+ \gamma_0}(\Omega)} $ is bounded (up to a multiplicative constant) by
\begin{equation}
\norm{u^0}^2_{D(A^2)}+\norm{v^0}^2_{D(A^{3/2})}+\norm{f(0)}^2_{D(A)}+\norm{{f_t(0)}}^2_{D(A^{1/2})} +\norm{{f_{tt}(0)}}^2+\norm{{f_{tt}}}^2_{L^\infty(L^2(\Omega))}+\norm{{f_{ttt}}}^2_{L^2(L^2(\Omega))}.\label{uth3}
\end{equation}
{{Consider \eqref{P1 dj} with integration by parts applied to  the last term $I(t)$ once to obtain
\begin{align*}
d_n(t)&=(u^0, \psi_n){\cos (\sqrt{\lambda_n}t)}+ \lambda_n^{-\frac{1}{2}}{(v^0, \psi_n)}{\sin (\sqrt{\lambda_n} t)} +\lambda_n^{-1}{f_n(t)}\\
&\quad -\lambda_n^{-1}{\cos (\sqrt{\lambda_n}t)}f_n(0)-\lambda_n^{-1}\int_0^t {\cos (\sqrt{\lambda_n}(t-s))}{f_{ns}(s)}\ds.
\end{align*}
A differentiation of \eqref{P1 dj} once followed by integration by parts of the term  ${I_t(t)}$ appearing in the derivative  twice, yields
\begin{align*}
 {d_{nt}(t)}&=-\sqrt{\lambda_n}(u^0, \psi_n){\sin (\sqrt{\lambda_n}t)}+ {(v^0, \psi_n)}{\cos(\sqrt{\lambda_n} t)}+\lambda_n^{-\frac{1}{2}}{\sin(\sqrt{\lambda_n} t) }f_n(0)\\
&\quad +\lambda_n^{-1}{f_{nt}(t)}-\lambda_n^{-1}{\cos (\sqrt{\lambda_n}t)}{f_{nt}(0)}-\lambda_n^{-1}\int_0^t {\cos (\sqrt{\lambda_n}(t-s))}{f_{nss}(s)}\ds. 
\end{align*}
The last two equations and an  analogous to the one used to derive \eqref{uth3} from \eqref{uth33}, shows that }}
\begin{align*}
\norm{u}^2_{L^\infty({H^{2+ \gamma_0}(\Omega))}}&\lesssim \norm{u^0}^2_{D(A)}+\norm{v^0}^2_{D(A^{1/2})}+\norm{f(0)}^2+\norm{f}^2_{L^\infty (L^2(\Omega))}+\norm{{f_t}}^2_{L^2 (L^2(\Omega))},\\ 
\norm{u_t}^2_{L^\infty({H^{2+ \gamma_0}
(\Omega))}}& \lesssim \norm{u^0}^2_{D(A^{3/2})}+\norm{v^0}^2_{D(A)}+\norm{f(0)}^2_{D(A^{1/2})}+\norm{{f_t(0)}}^2
+\norm{{f_t}}^2_{L^\infty(L^2(\Omega))}+\norm{{f_{tt}}}^2_{L^2(L^2(\Omega))}.
\end{align*}
{This concludes the proof.} \qed

\end{document}